\PassOptionsToPackage{dvipsnames,svgnames,table}{xcolor}

\documentclass[11pt]{amsart}  

\usepackage[vmargin=1in, hmargin=1.3in]{geometry}  
\usepackage[font=small,format=plain,labelfont=bf,up,textfont=it,up]{caption}
\usepackage{microtype}                  		
\usepackage{graphicx}						
\usepackage{amssymb}
\usepackage{amsmath}
\usepackage{amsthm}
\usepackage{xcolor}
\usepackage{wasysym}
\usepackage{array}
\usepackage{pst-node}
\usepackage{tikz-cd}
\usepackage{tikz}
\usepackage{tikz-qtree}
\usepackage{mathtools}
\usepackage{amsfonts}
\usepackage{bbm}
\usepackage{listings}
\usepackage{MnSymbol}
\usepackage{multirow}
\usepackage{centernot}
\usepackage{ stmaryrd }
\usepackage{subcaption}
\usepackage[backref=page, bookmarks, bookmarksdepth=2, colorlinks=true, linkcolor=blue, citecolor=blue, urlcolor=blue]{hyperref}
\usepackage{enumitem}

\usepackage[export]{adjustbox}
\usepackage{verbatim}
\usepackage{makecell}
\usepackage{xfrac}
\usepackage{multicol}
\usepackage{float}
\usepackage{fancyhdr}

\apptocmd{\thebibliography}{\raggedright}{}{}
\renewcommand*{\backref}[1]{}
\renewcommand*{\backrefalt}[4]{%
    \ifcase #1 (Not cited.)%
    \or        (Cited on page~#2.)%
    \else      (Cited on pages~#2.)%
    \fi}

\theoremstyle{plain}
\newtheorem*{theorem*}{Theorem}
\newtheorem*{conjecture*}{Conjecture}
\newtheorem*{corollary*}{Corollary}
\newtheorem{theorem}{Theorem}[section]
\newtheorem{maintheorem}{Theorem}

\newtheorem{proposition}[theorem]{Proposition}
\newtheorem{lemma}[theorem]{Lemma}

\newtheorem{corollary}[theorem]{Corollary}
\newtheorem{conjecture}[theorem]{Conjecture}

\theoremstyle{definition}

\theoremstyle{remark}

\theoremstyle{remark}
\newtheorem{rmk}[theorem]{Remark}
\newenvironment{remark}[1][]{\begin{rmk}[#1]}{\end{rmk}}
\newtheorem{eg}[theorem]{Example}

\definecolor{colorblind_blue}{RGB}{0,114,178}
\definecolor{colorblind_orange}{RGB}{213,94,0}
\definecolor{colorblind_green}{RGB}{0,158,115}
\definecolor{colorblind_purple}{RGB}{204,121,167}
\definecolor{colorblind_darkpurple}{RGB}{126,41,84}

\newcommand{\orange}[1]{\ensuremath{{\color{colorblind_orange} #1}}}
\renewcommand{\blue}[1]{\ensuremath{{\color{colorblind_blue} #1}}}
\renewcommand{\green}[1]{\ensuremath{{\color{colorblind_green} #1}}}

\newcommand{\arxiv}[1]{\href{http://arxiv.org/abs/#1}{{\tt arXiv:#1}}}

\newcommand{\Z}{\mathbb Z}
\newcommand{\Q}{\mathbb Q}

\newcommand{\F}{\mathbb F}

\newcommand{\ai}{\bar{a}_i}
\newcommand{\aj}{\bar{a}_j}
\newcommand{\ak}{\bar{a}_k}
\newcommand{\al}{\bar{a}_{\ell}}
\newcommand{\am}{\bar{a}_m}

\newcommand{\bi}{\bar{b}_i}
\newcommand{\bj}{\bar{b}_j}
\newcommand{\bk}{\bar{b}_k}
\newcommand{\bl}{\bar{b}_{\ell}}
\newcommand{\bm}{\bar{b}_m}
\newcommand{\bn}{\bar{b}_n}

\newcommand{\llsp}{\ell \ell Sp_{2g}(\F_2)}

\allowdisplaybreaks

\newcommand\Mod{\ensuremath{\operatorname{Mod}}}

\title[The Birman--Craggs--Johnson homomorphism and the handlebody Torelli group]{The Birman--Craggs--Johnson homomorphism and the handlebody Torelli group}

\author{Annie Holden}
\address{Department of Mathematics, University of Pennsylvania, Philadelphia, PA, USA}
\email{holdena@sas.upenn.edu}

\begin{document}

\newpage

\begin{abstract}

We use the Birman--Craggs--Johnson (BCJ) homomorphism to study the intersection of the handlebody group with the Torelli group and with the Johnson kernel. For genus $\geq 3$, we determine the images of the BCJ homomorphism restricted to these subgroups. We then explicitly compute cup products of pairs of cohomology classes in $H^1(-;\F_2)$ detected by the BCJ homomorphism for genus $\geq 4$. For the handlebody Johnson kernel, our results lift to integral coefficients.

\end{abstract}

\pagestyle{headings}
\maketitle
\thispagestyle{empty}

\section{Introduction} \label{section:intro}

\subsection{Surface mapping class groups} Let $\Sigma_g^1$ be an oriented surface of genus $g$ with one embedded disk. The mapping class group $\Mod_g^1$ is the group of isotopy classes of orientation-preserving diffeomorphisms of $\Sigma_g^1$ that fix the embedded disk pointwise. The action of $\Mod_g^1$ on $H_1(\Sigma_g^1; \Z)$ preserves the symplectic form. The kernel of this action is the \emph{Torelli group} $\mathcal{I}_g^1$, yielding the short exact sequence $$1 \rightarrow \mathcal{I}_g^1 \rightarrow \Mod_g^1 \rightarrow \operatorname{Sp}_{2g}(\Z) \rightarrow 1.$$ Understanding the cohomology $H^*(\mathcal{I}_g^1)$ is a central problem in geometric topology. In degrees one and two, rational cohomology is detected by the Johnson homomorphism (see \cite{JohnsonIII} and \cite{MinahanPutman}), whose kernel $\mathcal{K}_g^1$ (the \emph{Johnson kernel}) has deep connections to Heegaard splittings and the Casson invariant. To capture $2$-torsion, Johnson \cite{BCJ} defined the Birman--Craggs--Johnson (BCJ) homomorphism $$\sigma: \mathcal{I}_g^1 \rightarrow B_3$$ into an $\F_2$-vector space $B_3$, which induces an isomorphism $H_1(\mathcal{I}_g^1; \F_2) \cong B_3$ and connects $\mathcal{I}_g^1$ to the Rochlin invariant of homology $3$-spheres (see \cite{EellsKuiper}). Johnson also showed that the restriction of the BCJ homomorphism to $\mathcal{K}_g^1$ surjects onto the subspace $B_2 \subset B_3$, which is identified with the $2$-torsion subgroup of $H_1(\mathcal{I}_g^1; \Z)$. By analyzing this restriction map, Brendle--Farb \cite{BrendleFarb} constructed elements of $H^2(\mathcal{I}_g^1; \F_2)$ that cannot be detected rationally.

\subsection{Handlebody groups} \label{section:introhandlebody} Now fix a handlebody $\mathcal{V}_g^1$ with one embedded disk in its boundary so that $\partial \mathcal{V}_g^1 = \Sigma_g^1$. The handlebody group $\mathcal{H}_g^1$ is the subgroup of $\Mod_g^1$ that extends to $\mathcal{V}_g^1$. We define the \emph{handlebody Torelli group} by $\mathcal{HI}_g^1 := \mathcal{H}_g^1 \cap \mathcal{I}_g^1$ and the \emph{handlebody Johnson kernel} by $\mathcal{HK}_g^1 := \mathcal{H}_g^1 \cap \mathcal{K}_g^1$. These subgroups play a key role in the topology of $3$-manifolds (see, e.g., \cite[Section 3]{Hensel}).

\subsection{Structure and main results} This paper is the second in a series dedicated to studying $\mathcal{HI}_g^1$ and $\mathcal{HK}_g^1$ by adapting techniques from the surface mapping class group setting. In \cite[Theorem A]{Holden}, we investigated $H^2(\mathcal{HI}_g^1; \Q)$ via the Johnson homomorphism. Our methods did not yield any information about torsion or $\mathcal{HK}_g^1$. The natural next step is therefore to study $H^*(\mathcal{HI}_g^1; \F_2)$ and $H^*(\mathcal{HK}_g^1; \F_2)$ using the BCJ homomorphism. In our first result, we compute the images $\sigma(\mathcal{HI}_g^1)$ and $\sigma(\mathcal{HK}_g^1)$ of the BCJ homomorphism restricted to these subgroups. This identifies $H^1(\sigma(\mathcal{HI}_g^1); \F_2)$ and $H^1(\sigma(\mathcal{HK}_g^1); \F_2)$ with subspaces of $H^1(\mathcal{HI}_g^1; \F_2)$ and $H^1(\mathcal{HK}_g^1; \F_2)$, respectively, and sets us up to explicitly compute cup products of pairs of elements in these subspaces. Our approach detects new cohomology classes in $H^2(\mathcal{HI}_g^1)$ that cannot be detected rationally.

\subsection{The image of the BCJ homomorphism restricted to $\mathcal{HI}_g^1$ and $\mathcal{HK}_g^1$} When it is clear from context, we will abuse notation and denote the restriction maps $\sigma|_{\mathcal{HI}_g^1}$ and $\sigma|_{\mathcal{HK}_g^1}$ simply by $\sigma$. In Section \ref{section:handlebodyBCJ}, we define subspaces $B_2^{b_i} \subset B_2$ and $B_3^{b_i} \subset B_3$ by constructing a basis for each. We have the following theorem.

\begin{maintheorem} \label{theoremA}
    For $g \geq 3$, we have: \begin{itemize}
        \item $\sigma(\mathcal{HK}_g^1) = B_2^{b_i}$; and 
        \item $\sigma(\mathcal{HI}_g^1) = B_3^{b_i}$.
    \end{itemize}
\end{maintheorem}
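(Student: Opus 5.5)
The plan is to prove each equality by two inclusions. The lower bounds come from explicit elements of $\mathcal{HK}_g^1$ and $\mathcal{HI}_g^1$. The upper bounds come from a Rochlin-invariant (spin structure) obstruction showing that $\sigma(\mathcal{HI}_g^1)$ lies in a subspace cut out by linear conditions. Throughout I use Johnson's description of $B_3$ as Boolean polynomials of degree $\le 3$ in the $\ai,\bi$. Recall the standard formulas: $\sigma(T_\gamma)=\sum_{i\in S}\ai\bi$ for a separating twist whose curve cuts off the symplectic pairs indexed by $S$, and $\sigma(T_cT_{c'}^{-1})=\big(\sum_{i\in S}\ai\bi\big)(\bar c+1)$ for a bounding pair map. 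I normalize so that the $b_i$ bound meridian disks in $\mathcal{V}_g^1$.

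\textbf{Lower bounds.} For each basis element of $B_2^{b_i}$ (respectively $B_3^{b_i}$) as constructed in Section \ref{section:handlebodyBCJ}, I will exhibit an element of $\mathcal{HK}_g^1$ (respectively $\mathcal{HI}_g^1$) hitting it. The natural candidates are the following.
\begin{itemize}
\item Twists about separating curves that bound disks in $\mathcal{V}_g^1$ (these are in $\mathcal{HK}_g^1$).
\item Bounding pair maps $T_cT_{c'}^{-1}$ in which $c,c'$ cobound an annulus in $\mathcal{V}_g^1$, or in which $c$ and $c'$ are meridians.
\item Products and commutators of handlebody elements, for example commutators of twists about meridians with elements of $\mathcal{HI}_g^1$.
\end{itemize}
Since $\sigma$ is equivariant under $\Mod_g^1$ acting through $\operatorname{Sp}_{2g}(\F_2)$, and $\mathcal{H}_g^1$ acts through the stabilizer of the Lagrangian $L=\langle b_1,\dots,b_g\rangle$, the following reduction applies. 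It suffices to realize one representative of each orbit of basis monomials under this stabilizer (e.g.\ orbits of the types $\bi$, $\bi\bj$, $\ai\bi$, $\ai\bj\bk$, \dots). Conjugating by $\mathcal{H}_g^1$ then gives the rest. The hypothesis $g\ge 3$ enters here, since it leaves room to place the needed curves disjointly.

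\textbf{Upper bounds.} I will use the Birman--Craggs interpretation: for $f\in\mathcal{I}_g^1$ and a Heegaard embedding $\Sigma_g^1\hookrightarrow S^3$ splitting $S^3$ as $\mathcal{V}\cup\mathcal{V}'$, the Rochlin invariant of $\mathcal{V}\cup_f \mathcal{V}'$ equals $\sigma(f)$ evaluated at the corresponding spin structure / quadratic form $\omega$. If $f$ extends over $\mathcal{V}=\mathcal{V}_g^1$, then $\mathcal{V}\cup_f\mathcal{V}'\cong S^3$, so the invariant vanishes. Varying the embedding among all those with $\mathcal{V}_g^1$ as the inner handlebody gives a family of quadratic forms $\omega$ determined by $L$ (e.g.\ those with $\omega$ vanishing on $L$, together with their images under the Lagrangian stabilizer). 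Hence $\sigma(\mathcal{HI}_g^1)$ lies in the common kernel of the evaluations at these $\omega$. I then show that this common kernel is exactly $B_3^{b_i}$, by checking that the basis elements vanish and that the dimensions agree. For $\mathcal{HK}_g^1$ we have $\sigma(\mathcal{HK}_g^1)\subset B_2\cap B_3^{b_i}$, and I will verify that $B_2\cap B_3^{b_i}=B_2^{b_i}$ directly from the bases.

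\textbf{Main obstacle.} I expect the hardest step to be the upper bound: identifying precisely which quadratic forms arise from Heegaard embeddings compatible with $\mathcal{V}_g^1$, and proving that the resulting annihilator has dimension no larger than that of $B_3^{b_i}$. To get enough evaluation functionals, I would first try the orbit of one standard embedding under $\mathcal{H}_g^1$ and under twists along the $b_i$. If that family turns out to be too small, the fallback is a generating-set argument instead: show that $\mathcal{HI}_g^1$ is normally generated in $\mathcal{H}_g^1$ by the specific handlebody bounding pair maps and separating twists listed above, then compute their images directly.
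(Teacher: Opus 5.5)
Your proof is correct, and the upper bound takes a genuinely different route from the paper. The lower bound uses the same mechanism as the paper: the separating disk twist with $\sigma(T_x)=\bar a_1\bar b_1$, the bounding pair annulus twist with $\sigma=\bar a_1\bar b_1(\bar a_2+1)$, and closure of the image under $\llsp$.

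\textbf{Upper bound.} The paper uses Omori's theorem that one bounding pair annulus twist normally generates $\mathcal{HI}_g^1$ in $\mathcal{H}_g^1$. It then checks, generator by generator, that $B_3^{b_i}$ is $\llsp$-invariant. Your Rochlin argument goes through as follows.
\begin{enumerate}
\item If $h$ extends over $\mathcal{V}_g^1$, each $h(\mathfrak{b}_i)$ bounds a disk on one side, so $\omega_h(b_i)=0$.
\item Precomposing $h$ with the disk twist $T_{\mathfrak{b}_i}$ flips $\omega_h(a_i)$ alone, because $\omega(a_i+b_i)=\omega(a_i)+\omega(b_i)+1$ and the other basis vectors are fixed.
\item Hence the forms realized are exactly the set $\Omega_L$ of $2^g$ forms vanishing on $L=\langle b_1,\dots,b_g\rangle$. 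These all have Arf invariant $0$, so they genuinely come from genus-$g$ Heegaard splittings of $\mathbb{S}^3$.
\item Since $\omega\mapsto(\omega(a_i),\omega(b_i))_i$ is a bijection $\Omega\to\F_2^{2g}$, restricting $p\in B_3$ to $\Omega_L$ kills exactly the monomials containing some $\bi$. The $\bar a$-only monomials stay linearly independent as functions on $\F_2^g$.
\item So the common kernel is exactly $B_3^{b_i}$, and no separate dimension count is needed.
\end{enumerate}

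\textbf{What each route buys.} Your argument gives $\sigma(\mathcal{HI}_g^1)\subset B_3^{b_i}$ for every $g$ without Omori's theorem. It also explains conceptually why $B_3^{b_i}$ (the degree $\le 3$ part of the ideal generated by the $\bi$) is $\llsp$-invariant: the stabilizer preserves $\Omega_L$. The paper's route is purely algebraic once Johnson's twist formulas and Omori's generators are granted, but it needs a deep generation theorem and a long case check. Your reduction of the $\mathcal{HK}_g^1$ bound to $B_2\cap B_3^{b_i}=B_2^{b_i}$ is what the paper implicitly uses as well.

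\textbf{A caution on your lower-bound sketch.} $\llsp$ does not permute monomials, so ``one representative per orbit of basis monomials'' is not the right reduction. Elements such as $\bi$ or $\ai\bj$ are not the image of any single separating disk twist or bounding pair annulus twist. They arise only as differences $M\cdot v+v$, i.e.\ images of commutators $[\phi,f]$ with $\phi\in\mathcal{H}_g^1$, for $M=X_{ij},Y_{ij}$. The paper's chain does this, e.g.\ $X_{ij}(\ai\bi)=\ai\bi+\ai\bj$ and $Y_{jj}(\aj\bi)=\aj\bi+\bi\bj+\bi$. For $\mathcal{HK}_g^1$ only the separating disk twist and such commutators are available, since bounding pair maps are not in $\mathcal{K}_g^1$. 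This step is where the actual work lies and where $g\ge 3$ (three distinct indices) is used.
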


By Proposition \ref{prop:imageBCJdimension}, the dimensions of $B_2^{b_i}$ and $B_3^{b_i}$ grow quadratically and cubically in $g$, matching the polynomial degrees of $\operatorname{dim}(B_2)$ and $\operatorname{dim}(B_3)$, respectively. At first glance, one might expect the images of these maps to be much smaller. As we outline in more detail in Section \ref{section:BCJ}, the BCJ homomorphism connects $\mathcal{I}_g^1$ to the Rochlin invariant. Specifically, let $h: \Sigma_g^1 \hookrightarrow \mathbb{S}^3$ be a Heegaard embedding and $k \in \mathcal{I}_g^1$. Cutting $\mathbb{S}^3$ open along $h(\Sigma_g^1)$ yields two handlebodies $\mathcal{A}$ and $\mathcal{B}$, and we obtain a homology $3$-sphere $M(h,k)$ by regluing these handlebodies according to $k$. The BCJ homomorphism evaluated on $k$ tells us how the Rochlin invariant of $M(h,k)$ varies as $h$ varies. If $k \in \mathcal{HI}_g^1$ and $\mathcal{V}_g^1$ is identified with either $\mathcal{A}$ or $\mathcal{B}$, then $M(h,k)$ is simply the $3$-sphere $\mathbb{S}^3$. Theorem \ref{theoremA} implies that there exist many Heegaard embeddings $h$ for which $M(h,k) \ncong \mathbb{S}^3$ and $M(h,k)$ has a nontrivial Rochlin invariant. \\

Inspired by the analogous result for $\mathcal{I}_g^1$, we conjecture the following.

\begin{conjecture} \label{conj:abelianize} For $g \gg 0$, we have 
    $H_1(\mathcal{HI}_g^1; \F_2) \cong B_3^{b_i}$.
\end{conjecture} 

\subsection{Cup products and second cohomology} By Theorem \ref{theoremA}, the BCJ homomorphism gives us surjections from $\mathcal{HI}_g^1$ and $\mathcal{HK}_g^1$ onto $B_3^{b_i}$ and $B_2^{b_i}$, respectively, which factor through $H_1(-; \F_2)$. This identifies the dual spaces $H^1(B_3^{b_i}; \F_2)$ and $H^1(B_2^{b_i}; \F_2)$ with subspaces of $H^1(\mathcal{HI}_g^1; \F_2)$ and $H^1(\mathcal{HK}_g^1; \F_2)$, respectively. The induced maps $\sigma^*$ on $H^2$ are exactly the restrictions of the cup product maps to pairs of elements in these subspaces. In our next main result, we explicitly calculate these cup products. 

\begin{maintheorem} \label{maintheorem:HI}
    For $g \geq 4$, the image of $$\sigma^*: H^2(B_3^{b_i}; \F_2) \rightarrow H^2(\mathcal{HI}_g^1; \F_2)$$ has dimension at least on the order of $g^6$. 
\end{maintheorem}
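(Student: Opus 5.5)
We follow the Brendle--Farb strategy for $\mathcal{I}_g^1$: a class in $H^2(B_3^{b_i};\F_2)$ is nonzero after pulling back to $\mathcal{HI}_g^1$ if it evaluates nontrivially on a homology class supported on an abelian subgroup. Concretely, $H^*(B_3^{b_i};\F_2)$ is a polynomial algebra on the dual basis of $B_3^{b_i}$, and $H^2$ is spanned by squares and products $x\smile y$ of degree-one classes. For commuting elements $f,h\in\mathcal{HI}_g^1$ we get a map $\Z^2\to\mathcal{HI}_g^1$. Composing with $\sigma^*$ and evaluating on the fundamental class of $\Z^2$ gives
$$\langle \sigma^*(x\smile y), [f,h]\rangle = x(\sigma(f))\,y(\sigma(h)) + x(\sigma(h))\,y(\sigma(f)).$$
Thus each commuting pair defines a linear functional on $H^2(B_3^{b_i};\F_2)$ that vanishes on $\ker\sigma^*$. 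The dimension of the image of $\sigma^*$ is therefore at least the rank of the evaluation matrix with rows indexed by a chosen family of commuting pairs and columns indexed by a chosen family of products $x\smile y$.

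\textbf{Step 1: a supply of elements with computable BCJ images.} We take the handlebody analogues of bounding pair maps and separating twists used in proving Theorem \ref{theoremA}. These are twists on curves bounding disks in $\mathcal{V}_g^1$, and bounding pair maps in which one of the curves bounds a disk or the pair cobounds an annulus in the handlebody. Their $\sigma$-images are the explicit basis elements of $B_3^{b_i}$: cubic terms such as $\bar a_i\bar b_j\bar b_k$ and $\bar b_i\bar b_j\bar b_k$, with lower-order corrections, computed via Johnson's formula for bounding pair maps. For each index set of size roughly $6$, say $\{i,j,k\}$ and $\{l,m,n\}$ disjoint, we pick $f$ supported on a subsurface of genus about $3$ containing handles $i,j,k$. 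We pick $h$ on a disjoint subsurface containing handles $l,m,n$, with both supports compatible with the handlebody, for example separated by a disk-bounding curve. Then $f$ and $h$ commute, and $\sigma(f),\sigma(h)$ have leading cubic terms involving disjoint index sets. The hypothesis $g\geq 4$ should be what makes enough room for such disjoint supports together with the extra handle needed to realize the elements. For the order-$g^6$ count, we may need disjoint supports of smaller genus each, or allow overlapping indices as long as the curves are disjoint.

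\textbf{Step 2: triangularity.} We choose a family $\mathcal{P}$ of commuting pairs $(f_\alpha,h_\alpha)$ of size $\asymp g^6$, indexed by pairs of cubic monomials in disjoint handle indices. For each we take the dual products $x_\alpha\smile y_\alpha$ of the corresponding basis vectors. We order the pairs lexicographically and show the evaluation matrix is unitriangular. The diagonal entries equal $1$ by the displayed formula, since $x_\alpha(\sigma(f_\alpha))=y_\alpha(\sigma(h_\alpha))=1$ and the cross term vanishes. Off-diagonal entries are killed by the disjointness of supports: monomials appearing in $\sigma(f_\beta)$ only involve indices in $\operatorname{supp} f_\beta$. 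This gives rank $\geq|\mathcal{P}|\sim c\,g^6$.

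\textbf{Main obstacle.} The hard step is controlling the lower-order correction terms in $\sigma(f)$. These include the degree-$\leq 2$ terms and terms in the $\bar a$'s forced by the handlebody condition, and they could make off-diagonal entries nonzero. Our first attempt is to choose the basis of $B_3^{b_i}$ from Section \ref{section:handlebodyBCJ} so that it coincides with the $\sigma$-images of our chosen maps. Then the dual basis pairs exactly, and triangularity reduces to a support argument. If that fails, we restrict to purely cubic leading monomials and order by a monomial order in which the correction terms are strictly smaller.
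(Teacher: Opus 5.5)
Your cohomological setup is fine. The evaluation formula on an abelian cycle is correct, and the rank of the evaluation matrix does bound $\dim\operatorname{im}(\sigma^*)$ from below; this is the dual of the paper's use of $\operatorname{im}(\sigma_*)\subset\bigwedge^2B_3^{b_i}$.

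The gap is Step 1, which carries all the content and is never actually carried out. You assume that for each pair of disjoint index triples there are commuting $f,h\in\mathcal{HI}_g^1$ whose BCJ images have prescribed leading cubic monomials. Your justification is that the images of the twists ``used in proving Theorem A'' are the basis elements of $B_3^{b_i}$. They are not:
\begin{itemize}
\item the paper computes exactly two such images, $\sigma(T_x)=\bar a_1\bar b_1$ and $\sigma(T_yT_{y'}^{-1})=\bar a_1\bar b_1\bar a_2+\bar a_1\bar b_1$;
\item a single bounding pair annulus twist always has image of the special shape $(\sum_t\bar c_t\bar d_t)(\overline{[y]}+1)$;
\item every other basis element of $B_3^{b_i}$ is reached only as a difference $\phi\cdot v+v$ with $\phi\in\llsp$.
\end{itemize}
The rest of Step 1 is hedged (``roughly'', ``should be what makes enough room'', ``we may need''). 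The one concrete claim is also off: two disjoint genus-$3$ supports need $g\geq 6$, and for $g=4$ there are no two disjoint index triples at all.

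Step 2 does not work as stated either. Disjointness of supports does not kill off-diagonal entries between two pairs on the same index sets, since $\sigma(f_\alpha)$ may contain the leading monomial of $\sigma(f_\beta)$ as a lower term. This part is easy to repair: if the factor images form a linearly independent set, wedges of distinct members are automatically independent in $\bigwedge^2B_3^{b_i}$, so no triangularity is needed.

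The idea you are missing is equivariance. Conjugating a commuting pair in $\mathcal{HI}_g^1$ by an element of $\mathcal{H}_g^1$ gives another commuting pair, so $\operatorname{im}(\sigma_*)$ is an $\llsp$-submodule of $\bigwedge^2B_3^{b_i}$. The paper proceeds as follows:
\begin{itemize}
\item It computes only a handful of abelian cycles: $\{T_x,T_y\}$, $\{T_x,T_yT_{y'}^{-1}\}$, $\{T_z,T_yT_{y'}^{-1}\}$, $\{T_x,T_wT_{w'}^{-1}\}$, and one pair of commuting bounding pair annulus twists.
\item It then applies the generators $X_{ij},Y_{ij}$ repeatedly, discarding lower-order terms already known to lie in the image. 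This shows every basis element of $\bigwedge^2B_3^{b_i}$ outside $IM^3$ is in $\operatorname{im}(\sigma_*)$.
\item Since $\dim\bigwedge^2B_3^{b_i}=\tfrac{49}{72}g^6+O(g^5)$ and $\dim IM^3=O(g^5)$, the theorem follows.
\end{itemize}

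If you want to keep your support-based approach, the cleanest way to supply Step 1 is to apply Theorem A itself to two disjoint sub-handlebodies. Take genera $\lfloor g/2\rfloor,\lceil g/2\rceil\geq 3$, cut off by disjoint disks. By naturality of $\sigma$ under inclusion of a one-boundary subsurface, their handlebody Torelli groups have images $W_1,W_2$ spanned by disjoint sets of basis monomials, with no constant term. Elements from the two commute, and $W_1\otimes W_2\hookrightarrow\bigwedge^2B_3^{b_i}$ gives dimension of order $g^6$ (about $(\tfrac{7}{48})^2g^6$) for $g\geq 6$. For $g=4,5$ a single nonzero class suffices.
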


Theorem \ref{maintheorem:HI} is inspired by Brendle--Farb \cite[Proposition 9]{BrendleFarb}, who showed that the dimension of the image of $\sigma^*: H^2(B_3; \F_2) \rightarrow H^2(\mathcal{I}_g^1; \F_2)$ is also at least on the order of $g^6$. \\

In the following theorem, we explicitly compute cohomology classes which cannot be detected rationally, and so were not detected in \cite[Theorem A]{Holden}. 

\begin{maintheorem} \label{maintheorem:HK}
    Let $G$ be either $\mathcal{HI}_g^1$ or $\mathcal{HK}_g^1$. For $g \geq 4$, the induced map
        $$\sigma^*: H^2(B_2^{b_i}; \F_2) \rightarrow H^2(G; \F_2)$$ 
    has dimension at least on the order of $g^4$. 
\end{maintheorem}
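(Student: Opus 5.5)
The argument follows Brendle--Farb \cite[Proposition 9]{BrendleFarb}: a class in $H^2(B_2^{b_i};\F_2)$ pulled back under $\sigma$ is nonzero as soon as it evaluates nontrivially on some abelian cycle in $G$. I will therefore build many pairwise commuting pairs $(x,y)$ in $\mathcal{HK}_g^1 \subset \mathcal{HI}_g^1$ and evaluate pulled-back products of degree-one classes on the corresponding tori.

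First I describe $H^2(B_2^{b_i};\F_2)$. Since $B_2^{b_i}$ is an $\F_2$-vector space, $H^*(B_2^{b_i};\F_2)$ is a polynomial algebra on the dual basis $\{e_\alpha\}$ of the basis of $B_2^{b_i}$ from Section \ref{section:handlebodyBCJ}. So $H^2$ contains the products $e_\alpha e_\beta$ with $\alpha\neq\beta$; there are on the order of $g^4$ of these, because $\dim B_2^{b_i}$ is quadratic in $g$ by Proposition \ref{prop:imageBCJdimension}.

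Second, for a commuting pair $x,y\in G$ the map $\Z^2\to G$ gives a homology class $[x\wedge y]\in H_2(G;\F_2)$. Naturality of the cup product gives
$$\langle \sigma^*(e_\alpha e_\beta), [x\wedge y]\rangle = e_\alpha(\sigma x)\,e_\beta(\sigma y) + e_\alpha(\sigma y)\,e_\beta(\sigma x).$$
For $x,y$ I choose Dehn twists on separating curves, or genus-one bounding pair maps, that bound handlebody disks, or that are compositions of such twists preserving the handlebody structure. Their $\sigma$-values are the basis elements (products of $\bar b$'s) used to define $B_2^{b_i}$. I take $x$ supported on one subsurface and $y$ on a disjoint subsurface. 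Disjoint support makes them commute, and disjointness of the supports also lets me arrange that their basis monomials involve disjoint sets of indices. Choosing $x$ with $\sigma(x)=v_\alpha$ and $y$ with $\sigma(y)=v_\beta$ then makes the pairing equal to $\delta$-like values.

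Third, I assemble these into a matrix: rows are products $e_\alpha e_\beta$ with $\alpha,\beta$ supported on disjoint index sets (still on the order of $g^4$ of them), and columns are the tori $x_\alpha\wedge y_\beta$. I order the basis so the matrix is triangular with ones on the diagonal. The unwanted cross terms come from $\sigma(x_\alpha)$ having lower-order or linear corrections. Those products then give linearly independent classes, which yields dimension on the order of $g^4$ in $H^2(\mathcal{HK}_g^1;\F_2)$. Because the tori already lie in $\mathcal{HK}_g^1\subset\mathcal{HI}_g^1$, the same evaluation shows the same classes, pulled back via $H^2(B_3^{b_i})\to H^2(B_2^{b_i})$ or directly, are independent in $H^2(\mathcal{HI}_g^1;\F_2)$. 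For $G=\mathcal{HI}_g^1$, I take the $e_\alpha$ extended to $B_3^{b_i}$ along a chosen complement, which is legitimate since $B_2^{b_i}\subset B_3^{b_i}$.

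The main obstacle is the second step. I need to realize each relevant pair of basis elements of $B_2^{b_i}$ by commuting elements of $\mathcal{HK}_g^1$ with controlled $\sigma$-values. Genus separating twists in the handlebody group are constrained to curves bounding disks or annuli in $\mathcal{V}_g^1$. Disjointness of the supports also forces the two index sets to be genuinely disjoint; that is where $g\geq4$ enters, since it leaves room for two disjoint genus-$\ge2$ pieces. My first attempt will be to reuse the explicit elements from the proof of Theorem \ref{theoremA} that hit each basis vector, check whether their supports can be chosen disjoint for disjoint index sets, and then compute the triangularity by the Johnson formula for $\sigma$ of separating twists.
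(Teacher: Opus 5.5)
There is a genuine gap, and it is the step you defer: you never produce commuting pairs in $\mathcal{HK}_g^1$ whose $\sigma$-values are the basis vectors of $B_2^{b_i}$, and the mechanism you describe cannot produce them.

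\textbf{Why your proposed elements do not work.} By Lemma \ref{lemma:septwist}, a separating disk twist has $\sigma(T_x)=\sum_k \bar c_k\bar d_k$ for a symplectic basis of $H_1(S_x;\F_2)$. Its quadratic part is therefore carried by a \emph{symplectic} subspace of $H$. Such a value can be $\bar a_i\bar b_i$, but it can never be any of the following:
\begin{itemize}
\item $\bar a_i\bar b_j$ with $i\neq j$, or $\bar b_i\bar b_j$, whose quadratic parts live on isotropic planes, since $\hat i(a_i,b_j)=\hat i(b_i,b_j)=0$;
\item $\bar b_i$, which has no quadratic part at all.
\end{itemize}
Genus-one bounding pair maps do not help either. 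They are not in $\mathcal{HK}_g^1$, and their $\sigma$-values have cubic terms, so they do not land in $B_2^{b_i}$.

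So for most pairs $(\alpha,\beta)$ you have no torus to evaluate on. Your ``triangular matrix with ones on the diagonal'' is asserted, not computed. Ordering by degree would not give triangularity for free anyway, because correction terms can have higher degree: for example, $Y_{jj}(\bar a_j\bar b_i)=\bar a_j\bar b_i+\bar b_i\bar b_j+\bar b_i$.

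\textbf{How the paper avoids this.} It uses a single torus, $\sigma_*(\{T_x,T_y\})=\bar a_1\bar b_1\wedge\bar a_2\bar b_2$, coming from two disjoint separating disk twists. Because conjugation by $\mathcal H_g^1$ carries abelian cycles to abelian cycles and $\sigma$ is equivariant, $\operatorname{im}(\sigma_*)\subset\bigwedge^2B_2^{b_i}$ is an $\ell\ell Sp_{2g}(\F_2)$-subspace. Applying $X_{ij}$ and $Y_{ij}$ repeatedly (Theorem \ref{theorem:johnsonkernel}) yields every non-index-matched basis vector. The index-matched vectors span only $2g^3-g$ of the roughly $\frac98 g^4$ dimensions, so dualizing gives the bound (Proposition \ref{prop:dimageHK}).

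\textbf{How to repair your direct route.} You need a localization lemma: each basis vector $v$ equals $\sigma(f_v)$ exactly, for some $f_v\in\mathcal{HK}_g^1$ supported in a genus-$\le 2$ subsurface cut off by a disk-bounding curve around the handles indexed by $v$. This is true. Let $x_i$ be the disk-bounding curve around handle $i$.
\begin{itemize}
\item Take a handle slide $\phi$ supported on handles $i,j$ acting as $X_{ij}$. Then $T_{\phi(x_i)}T_{x_i}^{-1}$ has $\sigma$-value $\bar a_i\bar b_j$.
\item The same construction with a $\psi$ acting as $Y_{ij}$ gives $\bar b_i\bar b_j$.
\item If $\sigma(f)=\bar a_i\bar b_j$, then $\psi f\psi^{-1}f^{-1}$ gives $\bar b_j$.
\end{itemize}
With this lemma, disjointly supported $f_\alpha,f_\beta$ commute and satisfy $\sigma_*\{f_\alpha,f_\beta\}=v_\alpha\wedge v_\beta$ exactly. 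The evaluation matrix is then the identity, and pairs with disjoint index sets still number on the order of $g^4$. Your handling of $G=\mathcal{HI}_g^1$, by extending the $e_\alpha$ along a complement and using $\mathcal{HK}_g^1\subset\mathcal{HI}_g^1$, is fine once this lemma is in place.
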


Compare to \cite[Theorem 1]{BrendleFarb}, which says that the images of the analogous maps on $H^2(\mathcal{I}_g^1; \F_2)$ and $H^2(\mathcal{K}_g^1; \F_2)$ also have dimensions at least on the order of $g^4$. \\

Morita \cite{MoritaCasson} defined a surjective homomorphism $$\rho: \mathcal{K}_g^1 \rightarrow \mathcal{A}$$ that connects the Johnson kernel with the Casson invariant, a $\Z$-valued invariant on homology 3-spheres which reduces to the Rochlin invariant mod 2. Here, $\mathcal{A}$ is a $\Z$-algebra and $\rho$ is a lift of the BCJ homomorphism. The Morita homomorphism restricts to $\mathcal{HK}_g^1$ and we denote its image by $\mathcal{A}'$. Following the lifting techniques of Brendle--Farb \cite[Corollary 2 and Section 4]{BrendleFarb}, we have the following corollary.

\begin{corollary}
    For $g \geq 4$, the image of $$\rho^*: H^2(\mathcal{A}'; \Z) \rightarrow H^2(\mathcal{HK}_g^1; \Z)$$ has dimension at least on the order of $g^4$.
\end{corollary}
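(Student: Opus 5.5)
We follow the lifting strategy of Brendle--Farb \cite[Corollary 2 and Section 4]{BrendleFarb}, with Theorem~\ref{maintheorem:HK} (for $G=\mathcal{HK}_g^1$) as the input. The key structural fact is that the Morita homomorphism is a lift of the BCJ homomorphism. On $\mathcal{K}_g^1$ there is a reduction map $r:\mathcal{A}\to B_2$ with $r\circ\rho=\sigma|_{\mathcal{K}_g^1}$; it comes from Morita's identification of the relevant core of $\mathcal{A}$ with Casson-type data that reduces mod $2$ to Rochlin data. Restricting to $\mathcal{HK}_g^1$ gives a commutative triangle $\mathcal{HK}_g^1\xrightarrow{\rho}\mathcal{A}'\xrightarrow{r}B_2^{b_i}$, using Theorem~\ref{theoremA} to see that the composite lands onto $B_2^{b_i}$. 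In particular $r(\mathcal{A}')=B_2^{b_i}$.

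The plan has three steps.

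\textbf{Step 1: a free abelian quotient of $\mathcal{A}'$.} We would show that $\mathcal{A}'$, or at least the part of it we need, is free abelian. We would then choose a basis of a free abelian quotient $L$ of $\mathcal{A}'$ that reduces mod $2$ to the explicit basis of $B_2^{b_i}$ from Section~\ref{section:handlebodyBCJ}. Concretely, we would take the Morita images of the same handlebody elements (genus-one and genus-two bounding pair maps, or separating twists) whose $\sigma$-images form that basis. We would then check that their $\rho$-images are linearly independent over $\Z$ and span a direct summand whose mod-$2$ reduction is $B_2^{b_i}$. This produces a surjection $\mathcal{HK}_g^1\to L\cong\Z^N$ lifting $\sigma$.

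\textbf{Step 2: integral cup products of the integral lifts.} For dual integral classes $\tilde\alpha,\tilde\beta\in H^1(L;\Z)$, consider the product $\rho^*(\tilde\alpha\smile\tilde\beta)$, which lies in the image of $\rho^*$ on $H^2(\mathcal{A}';\Z)$ (factoring through $H^2(L;\Z)$, a quotient of $\mathcal{A}'$). Reducing mod $2$ is natural and commutes with cup products. So this class reduces to $\sigma^*(\alpha\smile\beta)$, where $\alpha,\beta$ are the corresponding $\F_2$-classes in $H^1(B_2^{b_i};\F_2)$.

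\textbf{Step 3: transfer the lower bound.} Take the family of products that Theorem~\ref{maintheorem:HK} shows to be linearly independent in $H^2(\mathcal{HK}_g^1;\F_2)$; this family has size on the order of $g^4$. Their integral lifts are then linearly independent in $H^2(\mathcal{HK}_g^1;\Z)\otimes\F_2$, because any $\Z$-relation among them, after dividing out common factors of $2$, would reduce to a nontrivial $\F_2$-relation. This independence is also what guarantees the lifts are nonzero of infinite order. Hence the image of $\rho^*$ has rank at least on the order of $g^4$. Here ``dimension'' is interpreted as the rank of the image, or as its dimension after tensoring with $\F_2$.

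\textbf{Main obstacle.} The hardest part is Step 1: confirming that $\rho$ restricted to $\mathcal{HK}_g^1$ admits such a $\Z$-lift of the full basis of $B_2^{b_i}$. This requires explicit computation of Morita's $\rho$ (through its $d$- and Casson-core components) on the chosen handlebody generators. I would first try to mirror Brendle--Farb's argument, in which the relevant part of $\mathcal{A}$ is the image of $\rho$ on separating twists and is free abelian with reduction $B_2$. I would then use the fact that our basis elements of $B_2^{b_i}$ are images of handlebody separating twists or handlebody bounding pair products, which reduces the computation to the known formulas.
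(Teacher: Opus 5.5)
Your outline (use $r\circ\rho=\sigma$ on $\mathcal{HK}_g^1$, lift the mod-$2$ characters of $\mathcal{A}'$ to integral ones, and compare with Theorem~\ref{maintheorem:HK} by naturality of reduction mod $2$) is the Brendle--Farb lifting argument that the paper cites. However, Step~3 does not prove the rank bound. Write $c_{uv}=\rho^*(\tilde u^*\smile\tilde v^*)$, where $u\wedge v$ runs over non-index-matched basis elements and $\tilde u^*\colon\mathcal{A}'\to\Z$ lifts $u^*\circ r$. The reductions of the $c_{uv}$ are independent in $H^2(\mathcal{HK}_g^1;\F_2)$, but this only shows that the $c_{uv}$ are independent in $H^2(\mathcal{HK}_g^1;\Z)\otimes\F_2$. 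That is, it bounds $\dim_{\F_2}(\operatorname{im}\rho^*\otimes\F_2)$, and that is all your Step~3 establishes (your second reading of ``dimension''). You cannot ``divide out common factors of $2$'' in $H^2(\mathcal{HK}_g^1;\Z)$. A relation $\sum 2m_{uv}c_{uv}=0$ only says that $\sum m_{uv}c_{uv}$ is $2$-torsion, and this group may contain $2$-torsion. Integral classes that are nonzero mod $2$ but have order $2$ do occur: for $\pi_1$ of the circle bundle over the torus with Euler number $2$, the pullback of $x\smile y\in H^2(\Z^2;\Z)$ is one. So nothing in your argument prevents every $c_{uv}$ from being torsion. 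Your claims that the lifts have infinite order and that the image has rank on the order of $g^4$ are therefore unsupported.

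The missing ingredient is that the detecting homology classes are integral. Each class in $H_2(\mathcal{HK}_g^1;\F_2)$ that Theorem~\ref{theorem:johnsonkernel} uses to hit a non-index-matched $u'\wedge v'$ is the reduction of an integral class $\tilde\xi_{u'v'}$. Concretely, it is a $\Z$-combination of abelian cycles and their conjugates by elements of $\mathcal{H}_g^1$ realizing the $X_{ij}$ and $Y_{ij}$, all of which are images of the integral fundamental class of $\Z^2$. Hence the integer matrix $\bigl(\langle c_{uv},\tilde\xi_{u'v'}\rangle\bigr)$ is congruent to the identity mod $2$, so its determinant is odd and hence nonzero. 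It follows that the $c_{uv}$ span a free abelian subgroup of $\operatorname{im}\rho^*$ of rank $\frac{9}{8}g^4+O(g^3)$. Equivalently, you can push the $\tilde\xi$ into $H_2(\mathcal{A}';\Z)\cong\bigwedge^2\mathcal{A}'$, which is torsion-free when $\mathcal{A}'$ is free abelian; your halving argument is valid there, and you then dualize by the universal coefficient theorem. By contrast, Step~1 is not the real obstacle. It only has to lift each $u^*\circ r$ to a homomorphism $\mathcal{A}'\to\Z$, which is automatic when $\mathcal{A}'$ is free abelian, so no explicit computation of $\rho$ and no direct-summand statement is needed. Note also that bounding pair maps do not lie in $\mathcal{K}_g^1$, so $\rho$ is not defined on them anyway.
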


\subsection{Acknowledgments} I would like to thank my advisor Andrew Putman for introducing me to this problem and for his continued help and guidance. I thank him and Katherine Novey for their comments on an earlier draft of this paper. 

\section{Polynomial functions on quadratic forms}

In this section, we introduce the background needed to construct the BCJ homomorphism in Section \ref{section:BCJ}.

\subsection{Surface homology} \label{section:surfacehomology} Let $H := H_1(\Sigma_g^1; \F_2)$ and let $\hat{i}(-,-)$ be the $\F_2$-valued algebraic intersection form on $H$. 

\begin{figure}[H]
    \centering
    \includegraphics[scale=.5]{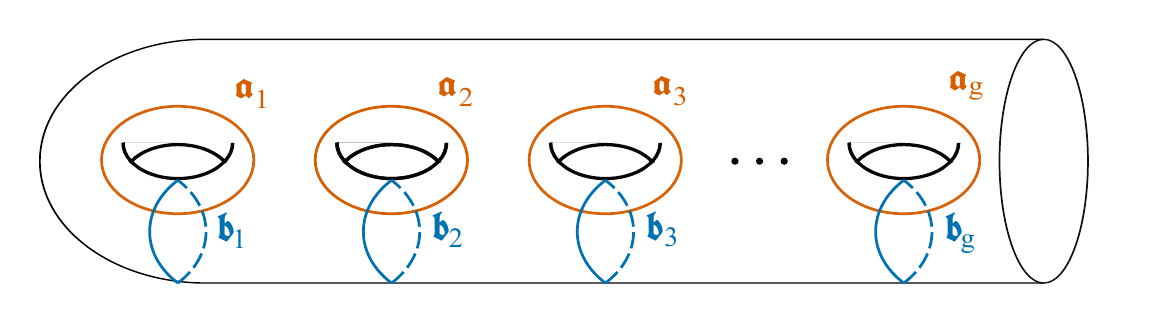}
    \caption{A basis for $H$}
    \label{figure:homologybasis}
\end{figure}

\noindent Figure \ref{figure:homologybasis} shows curves whose homology classes $[\mathfrak{a}_i] = a_i$ and $[\mathfrak{b}_i] = b_i$ form a symplectic basis for $H \cong \F_2 \langle a_1, \dots, a_g, b_1, \dots, b_g \rangle$. In Section \ref{section:handlebody}, when we begin studying the handlebody group, the $\mathfrak{b}_i$ curves will bound disks in our fixed handlebody $\mathcal{V}_g^1$.

\subsection{Quadratic forms} \label{section:quadraticforms} A \emph{quadratic form} is a set map $$\omega: H \rightarrow \F_2$$ such that, for each $x,y \in H$, we have $$\omega(x + y) = \omega(x) + \omega(y) + \hat{i}(x, y).$$

\noindent We define $\Omega$ to be the set of all quadratic forms on $H$. The symplectic group $\operatorname{Sp}_{2g}(\F_2)$ acts naturally on $\Omega$ through its action on $H$. Given $M \in \operatorname{Sp}_{2g}(\F_2)$ and $\omega \in \Omega$ and $x \in H$, we have $$(M \cdot \omega)(x) = \omega(M(x)).$$ 

\subsection{Polynomial functions on $\Omega$} \label{section:polyfunctions} For each $x \in H$, we get a linear function $$\bar{x}: \Omega \rightarrow \F_2$$ given by $$\bar{x}(\omega) = \omega(x).$$ Our set map $x \mapsto \bar{x}$ fails to be a homomorphism in the following way. Let $1$ denote the constant function that sends all elements of $\Omega$ to $1 \in \F_2$. Since \begin{gather*}
    \overline{(x+y)}(\omega) = \omega(x+y) \\
    = \omega(x) + \omega(y) + \hat{i}(x,y) \\ 
    = (\bar{x} + \bar{y} + \hat{i}(x,y))(\omega),
\end{gather*} we have the following lemma.

\begin{lemma}[{\cite[Lemma 4]{BCJ}}]
    Let $x,y \in H$. We have $$\overline{(x+y)} = \bar{x} + \bar{y} + \hat{i}(x, y).$$
\end{lemma}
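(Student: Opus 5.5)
The plan is to verify the identity pointwise on $\Omega$, because two functions $\Omega \rightarrow \F_2$ are equal exactly when they agree at every quadratic form. I will also read the right-hand side carefully. There, $\bar{x} + \bar{y}$ means pointwise addition of $\F_2$-valued functions. The term $\hat{i}(x,y)$ is an element of $\F_2$, and I interpret it as a constant function: the zero function if $\hat{i}(x,y)=0$, and the constant function $1$ introduced above if $\hat{i}(x,y)=1$.

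Fix an arbitrary $\omega \in \Omega$. By the definition of the linear functions, $\overline{(x+y)}(\omega) = \omega(x+y)$. Since $\omega$ is a quadratic form, the defining identity from Section \ref{section:quadraticforms} gives $\omega(x+y) = \omega(x) + \omega(y) + \hat{i}(x,y)$. Applying the definition again, $\omega(x) = \bar{x}(\omega)$ and $\omega(y) = \bar{y}(\omega)$. Under the convention above, $\hat{i}(x,y)$ is the value at $\omega$ of the corresponding constant function. Putting these together, $\overline{(x+y)}(\omega) = (\bar{x} + \bar{y} + \hat{i}(x,y))(\omega)$.

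Because $\omega$ was arbitrary, the two functions agree everywhere on $\Omega$, and the lemma follows. This is essentially the computation displayed just before the statement. There is no real obstacle. The only point needing care is the bookkeeping in the first paragraph: $\hat{i}(x,y)$ must be read as a constant function, either $0$ or $1$, inside the ring of $\F_2$-valued functions on $\Omega$.
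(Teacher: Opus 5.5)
Your proof is correct and is essentially the paper's argument: the paper justifies the lemma by the same pointwise computation $\overline{(x+y)}(\omega) = \omega(x+y) = \omega(x) + \omega(y) + \hat{i}(x,y) = (\bar{x} + \bar{y} + \hat{i}(x,y))(\omega)$, displayed just before the statement. Your explicit reading of $\hat{i}(x,y)$ as either the zero function or the constant function $1$ just spells out the convention the paper uses implicitly.
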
 The set $\{ 1, \bar{a}_1, \dots, \bar{a}_g, \bar{b}_1, \dots, \bar{b}_g \}$ forms a basis for the $\F_2$-vector space of linear functions\footnote{See \cite[Section 4]{BCJ}, which proves this from the perspective of $\Omega$ being an affine space over $H^1(\Sigma_g^1; \F_2)$.} on $\Omega$. A polynomial function on $\Omega$ is a sum of products of linear functions. Since any such polynomial and its square evaluate in the same way, we consider the space $B_r$ of \emph{Boolean} (``square-free'') polynomials of degree at most $r$.

\subsubsection{The action of $\operatorname{Sp}_{2g}(\F_2)$} \label{section:llspaction} The action of $\operatorname{Sp}_{2g}(\F_2)$ on $B_r$ is adjoint to its action on $\Omega$ described in Section \ref{section:quadraticforms}. In particular, for $x \in H$ and $M \in \operatorname{Sp}_{2g}(\F_2)$, we have $$M \cdot \bar{x} = \overline{M(x)}.$$

\section{The Birman--Craggs--Johnson homomorphism} \label{section:BCJ}

\subsection{Heegaard embeddings and quadratic forms} \label{section:HeegaardandQFs} Let $h: \Sigma_g^1 \hookrightarrow \mathbb{S}^3$ be a Heegaard embedding. Associated to $h$ is a quadratic form $\omega_h: H \rightarrow \F_2$ coming from Seifert's linking form. Briefly, given $x \in H$, the form $\omega_h$ measures the self-linking of $h(x)$ and its positive push-off $h(x)^+$. See \cite[Section 5]{BCJ} for more details. All quadratic forms arise in this way: given $\omega \in \Omega$, we have $\omega = \omega_h$ for some Heegaard embedding $h$. 

\subsection{The Birman--Craggs homomorphisms} \label{section:BChomoms} 

Given a Heegaard embedding $h$ and $k \in \mathcal{I}_g^1$, we form a 3-manifold $M(h,k)$ as follows. Cut $\mathbb{S}^3$ open along $h(\Sigma_g^1)$. This splits $\mathbb{S}^3$ into two handlebodies $\mathcal{A}$ and $\mathcal{B}$ with $\partial \mathcal{A} = \partial \mathcal{B} = h(\Sigma_g^1)$. We reglue $\mathcal{A}$ to $\mathcal{B}$ via $k$ by sending $x \in \partial \mathcal{A}$ to $hkh^{-1}(x) \in \partial \mathcal{B}$. The resulting manifold $M(h,k)$ is a homology 3-sphere, and so we can take its Rochlin invariant $\mu(M(h,k)) \in \F_2$. Given a fixed Heegaard embedding $h$, Birman--Craggs \cite{BC} proved the remarkable fact that the map $\mathcal{I}_g^1 \rightarrow \F_2$ given by $k \mapsto \mu(M(h,k))$ is a homomorphism. Johnson \cite{BCJ} proved that these homomorphisms depend entirely on the induced quadratic form $\omega_h$ (recall Section \ref{section:HeegaardandQFs}) and so are in one-to-one correspondence with $\Omega$, the set of all quadratic forms on $H$. From this, Johnson enumerated the Birman--Craggs homomorphisms and then packaged them into one homomorphism which we describe in the following section.

\subsection{The Birman--Craggs--Johnson homomorphism} Let $B_3$ be the vector space of degree-3 Boolean polynomials as defined in Section \ref{section:polyfunctions}. Johnson \cite{BCJ} defined a surjective homomorphism $$\sigma: \mathcal{I}_g^1 \rightarrow B_3,$$ now called the Birman--Craggs--Johnson (BCJ) homomorphism. The BCJ homomorphism packages all of the Birman--Craggs homomorphisms together in the sense that $\operatorname{ker}(\sigma)$ is exactly the intersection of the kernels of all of the Birman--Craggs homomorphisms.

To see this more clearly, consider the following construction. Enumerate the Birman--Craggs homomorphisms $\rho_1, \dots, \rho_N$, and consider the homomorphism $$\tilde{\sigma}: \mathcal{I}_g^1 \rightarrow \bigoplus_{i=1}^N \F_2$$ defined by $$k \mapsto (\rho_1(k), \dots, \rho_N(k)).$$ The image $\operatorname{im}(\tilde{\sigma})$ is an $\F_2$-vector space with kernel $\operatorname{ker}(\tilde{\sigma}) = \bigcap_{i=1}^N \operatorname{ker}(\rho_i)$, and Johnson showed that $\operatorname{im}(\tilde{\sigma}) \cong B_3$.

\section{Handlebody groups} \label{section:handlebody}

Fix a handlebody $\mathcal{V}_g^1$ such that $\partial \mathcal{V}_g^1 = \Sigma_g^1$ and the curves $\mathfrak{b}_1, \dots, \mathfrak{b}_g$ from Figure \ref{figure:homologybasis} bound disks. Recall from Section \ref{section:introhandlebody} that the handlebody group $\mathcal{H}_g^1$ is the subgroup of $\Mod_g^1$ that extends to $\mathcal{V}_g^1$. 

\subsection{Symplectic representation} The handlebody group acts on $H_1(\Sigma_g^1; \Z)$, and Hirose \cite{Hirose} proved that its image in $\operatorname{Sp}_{2g}(\Z)$ is $$\ell \ell Sp_{2g}(\Z) = \left\{  \begin{bmatrix} A & 0 \\ B & (A^t)^{-1} \end{bmatrix} \; | \; A \in \operatorname{GL}_g(\Z), \; B = B^t \right\}.$$

\noindent Hirose chose the convention that the $\mathfrak{a}_i$ curves bound disks in the fixed handlebody, so he called this image $urSp_{2g}(\Z)$, where \emph{ur} stands for ``upper right." We instead have the $\mathfrak{b}_i$ curves bound disks in $\mathcal{V}_g^1$, so the nontrivial entries of our matrices lie in the lower left.\footnote{We choose this convention so that $\mathcal{H}_g^1$ acts in the standard way on $H_1(\mathcal{V}_g^1; \Z) \cong \Z \langle a_1, \dots, a_g \rangle$.}

\subsection{The handlebody Torelli group and handlebody Johnson kernel} As discussed in Section \ref{section:intro}, the handlebody Torelli group $\mathcal{HI}_g^1 := \mathcal{H}_g^1 \cap \mathcal{I}_g^1$ is the subgroup of $\mathcal{H}_g^1$ that acts trivially on $H_1(\Sigma_g^1; \Z)$. This is summarized in the short exact sequence $$1 \rightarrow \mathcal{HI}_g^1 \rightarrow \mathcal{H}_g^1 \rightarrow \ell \ell Sp_{2g}(\Z) \rightarrow 1.$$ We are also interested in the handlebody Johnson kernel $\mathcal{HK}_g^1 := \mathcal{H}_g^1 \cap \mathcal{K}_g^1$.

\section{The image of the BCJ homomorphism} \label{section:handlebodyBCJ}

In this section, we calculate the images of the BCJ homomorphism $\sigma$ restricted to $\mathcal{HK}_g^1$ and $\mathcal{HI}_g^1$. Building on the setup in Section \ref{section:polyfunctions}, define $B_2^{b_i}$ to be the subspace of $B_2$ generated by the following elements:
\begin{itemize}
\item $\ai \bi$ for $1 \leq i \leq g$; and
\item $\ai \bj$ for $1 \leq i \neq j \leq g$; and
\item $\bi$ for $1 \leq i \leq g$; and
\item $\bi \bj$ for $1 \leq i < j \leq g$.
\end{itemize} For future use, we denote by $\mathcal{B}_2^{b_i}$ the set of these basis elements. Similarly, define $B_3^{b_i}$ to be the subspace of $B_3$ generated by $\mathcal{B}_2^{b_i}$ along with the following degree-3 elements:
\begin{itemize}
    \item $\ai \aj \bi$ for $1 \leq i \neq j \leq g$; and
    \item $\ai \aj \bk$ for $1 \leq i < j \leq g$ and $k \neq i,j$; and
    \item $\ai \bi \bj$ for $1 \leq i \neq j \leq g$; and
    \item $\ai \bj \bk$ for $1 \leq i \leq g$ and $i \neq j,k$ and $1 \leq j < k \leq g$; and
    \item $\bi \bj \bk$ for $1 \leq i < j < k \leq g$.
\end{itemize} We denote the set of basis elements for $B_3^{b_i}$ by $\mathcal{B}_3^{b_i}$. In particular, $B_3^{b_i}$ does not contain elements of the forms $1$ and $\ai$ and $\ai \aj$ and $\ai \aj \ak$ of $B_3$ with ``no $b_i$'s." We are now ready to state Theorem A.

\begin{theorem*}[Theorem \ref{theoremA}] For $g \geq 3$, we have: \begin{itemize}
        \item $\sigma(\mathcal{HK}_g^1) = B_2^{b_i}$; and 
        \item $\sigma(\mathcal{HI}_g^1) = B_3^{b_i}$. \end{itemize}  
\end{theorem*}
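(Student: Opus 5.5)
The proof has two halves for each equality: an upper bound $\sigma(\mathcal{HI}_g^1) \subseteq B_3^{b_i}$ (and $\sigma(\mathcal{HK}_g^1) \subseteq B_2^{b_i}$), and a lower bound obtained by exhibiting elements whose images span the claimed subspaces.

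\textbf{Upper bound via Rochlin invariants.} Recall from the introduction that if $k \in \mathcal{HI}_g^1$ and $h$ is a Heegaard embedding whose handlebody $\mathcal{A}$ (or $\mathcal{B}$) is identified with $\mathcal{V}_g^1$, then $M(h,k) \cong \mathbb{S}^3$. Hence $\mu(M(h,k))=0$ for all such $h$.

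Johnson's description says $\sigma(k)$ evaluated at $\omega_h$ equals $\mu(M(h,k))$. So $\sigma(k)$ vanishes on every quadratic form $\omega$ arising from such "handlebody-compatible" embeddings. For these embeddings, the $\mathfrak{b}_i$ bound disks in one side, so their pushoffs have zero linking. This gives $\omega(b_i)=0$ for all $i$, and conversely every form with $\omega(b_i)=0$ for all $i$ should arise this way.

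I would then check that a Boolean polynomial of degree $\le 3$ vanishing on $\{\omega : \bar b_i(\omega)=0 \ \forall i\}$ lies in the ideal generated by the $\bar b_i$. On that affine subspace the $\bar a_i$ take all values freely, so the monomials with no $\bar b$'s are linearly independent functions there. The same conclusion then holds with $\bar b_i$ replaced by $\bar b_i + \hat i(\cdot)$-corrections. One has to be careful here because $\overline{a_i+b_i}=\bar a_i+\bar b_i+1$, which is exactly why $B_3^{b_i}$ is spanned by monomials containing a $\bar b$, including $\bar b_i$ itself.

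For $\mathcal{HK}_g^1$ I intersect with Johnson's result $\sigma(\mathcal{K}_g^1)=B_2$, giving $\sigma(\mathcal{HK}_g^1)\subseteq B_2\cap B_3^{b_i}=B_2^{b_i}$. This last equality follows by comparing monomial bases.

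\textbf{Lower bound via explicit elements.} For the reverse inclusions I use Johnson's formulas:
\begin{itemize}
\item a bounding pair map $T_\gamma T_\delta^{-1}$ of genus $k$ maps to $\sum_{i} \bar{x}_i\bar{y}_i\,(\bar c+1)$-type expressions;
\item a separating twist maps to $\sum \bar x_i \bar y_i$ over a symplectic basis of the subsurface it cuts off.
\end{itemize}

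For $\mathcal{HK}_g^1$, I take separating twists on curves bounding disks in $\mathcal{V}_g^1$, together with handlebody-friendly separating curves (twists on meridians bounding disks lie in $\mathcal{H}_g^1$). Their images give $\sum_{i\in S}\bar a_i\bar b_i$, and applying elements of $\mathcal{H}_g^1$, whose action on $B_r$ factors through $\ell\ell Sp_{2g}(\F_2)$ as in Section \ref{section:llspaction}, produces $\bar a_i\bar b_j$, $\bar b_i\bar b_j$ and $\bar b_i$. The action matters because conjugation by $\mathcal{H}_g^1$ preserves $\mathcal{HI}_g^1$ and $\mathcal{HK}_g^1$, and $\sigma$ is equivariant. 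For example, the transvection $a_i\mapsto a_i+b_j$ sends $\bar a_i\bar b_i$ to $(\bar a_i+\bar b_j)\bar b_i$, which yields $\bar b_i\bar b_j$, and linear terms $\bar b_i$ appear from $\overline{a_i+b_i}$ corrections.

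For $\mathcal{HI}_g^1$, I use bounding pair maps where both curves are meridians (disk-bounding). Their images are degree-3 terms like $\bar a_i\bar b_i\bar b_k$ plus lower-order terms, and I again saturate under $\ell\ell Sp$, giving the monomials $\bar a_i\bar a_j\bar b_k$ and so on. Here $g\ge 3$ ensures room for the needed configurations.

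\textbf{Main obstacle.} I expect the hardest step to be the upper bound's identification of exactly which quadratic forms arise from handlebody-compatible embeddings, and then the Boolean ideal argument with the affine correction terms. The lower bound should be routine bookkeeping once one generating element per $\ell\ell Sp$-orbit of basis monomials is found.
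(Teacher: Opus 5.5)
Your upper bound is sound, but the lower bound for $\mathcal{HI}_g^1$ fails as planned, because your generators cannot produce monomials with two $\bar a$ factors.

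Let $F_1\subset B_3$ be the span of monomials with at most one $\bar a$ factor. Every $\sigma$-image you propose lies in $F_1$.
\begin{itemize}
\item For a bounding pair whose curves $y,y'$ are both meridians, $[y]\in\operatorname{span}(b_1,\dots,b_g)$.
\item Since $y,y'$ bound disjoint disks, $S_{y,y'}$ together with these disks bounds a handlebody $N\subset\mathcal{V}_g^1$. So you may choose the symplectic basis $c_i,d_i$ with each $d_i$ bounding in $N$, hence $d_i\in\operatorname{span}(b_1,\dots,b_g)$, and $(\sum\bar c_i\bar d_i)(\bar y+1)\in F_1$.
\item The same argument applies to separating disk twists.
\end{itemize}
Now every element of $\ell\ell Sp_{2g}(\F_2)$ preserves $\operatorname{span}(b_1,\dots,b_g)$. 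It therefore sends each $\bar b_k$ to a sum of $\bar b_l$'s with no constant term (the $b$'s are isotropic), and each $\bar a_k$ to an affine-linear function. Hence it preserves $F_1$. Saturating under $\ell\ell Sp_{2g}(\F_2)$ can never reach $\bar a_i\bar a_j\bar b_i$ or $\bar a_i\bar a_j\bar b_k$.

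The missing ingredient is an element of $\mathcal{HI}_g^1$ whose image already contains a two-$\bar a$ monomial. A bounding pair annulus twist supplies this: take $y,y'$ cobounding an annulus in $\mathcal{V}_g^1$, with $[y]=a_2$ and $S_{y,y'}$ of genus one with basis $a_1,b_1$. Then $\sigma(T_yT_{y'}^{-1})=\bar a_1\bar b_1\bar a_2+\bar a_1\bar b_1$, and $X_{ik}$, $Y_{jj}$, $Y_{ik}$ produce the remaining degree-3 basis elements, exactly as in the paper. Your $\mathcal{HK}_g^1$ lower bound is fine and matches the paper.

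Your upper bound takes a genuinely different route from the paper. The paper uses Omori's theorem that one annulus twist normally generates $\mathcal{HI}_g^1$ in $\mathcal{H}_g^1$, plus a case-by-case check that $B_3^{b_i}$ is $\ell\ell Sp_{2g}(\F_2)$-invariant. Your vanishing-locus argument avoids both and is more conceptual. The step you leave as ``should'' is easy to fill:
\begin{itemize}
\item Take a standard $h_0$ with $h_0(\mathcal{V}_g^1)=\mathcal{A}$ and $\omega_{h_0}(a_i)=\omega_{h_0}(b_i)=0$.
\item Precompose with products of disk twists $T_{\mathfrak{b}_i}\in\mathcal{H}_g^1$, which send $a_i\mapsto a_i+b_i$. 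This realizes every $\omega$ with all $\omega(b_i)=0$; all such forms have Arf invariant $0$.
\item For such $f$ and any $k\in\mathcal{HI}_g^1$, $M(h_0f,k)=M(h_0,fkf^{-1})\cong\mathbb{S}^3$.
\end{itemize}
Your remark about ``$\hat i$-corrections'' is unnecessary. Splitting a polynomial into monomials with and without a $\bar b$ factor, and using that the $\bar a_i$ are free on $\{\omega:\omega(b_i)=0\ \forall i\}$, is the whole argument.
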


Theorem \ref{theoremA} will follow from Lemma \ref{lemma:subset} and Lemma \ref{lemma:llspsubspace}. We first establish containment with the following lemma.

\begin{lemma} \label{lemma:subset}
    For $g \geq 3$, we have: \begin{itemize}
        \item $B_2^{b_i} \subset \sigma(\mathcal{HK}_g^1)$; and 
        \item $B_3^{b_i} \subset \sigma(\mathcal{HI}_g^1)$. \end{itemize}  
\end{lemma}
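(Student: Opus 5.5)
Lemma \ref{lemma:subset} calls for explicit elements of $\mathcal{HK}_g^1$ and $\mathcal{HI}_g^1$ whose BCJ images hit every element of $\mathcal{B}_2^{b_i}$ and $\mathcal{B}_3^{b_i}$. Since $\sigma$ is a homomorphism, it suffices to realize each basis element (or each basis element modulo ones already realized) as $\sigma$ of some handlebody Torelli element. The tool is Johnson's formulas from \cite{BCJ}. For a separating curve $\gamma$ cutting off a subsurface with symplectic basis $x_1,y_1,\dots,x_k,y_k$,
$$\sigma(T_\gamma) = \sum_{i=1}^k \bar{x}_i\bar{y}_i \quad\text{(as a product with the appropriate symmetric sum structure, Johnson's formula } \textstyle\prod?\text{)},$$
and more precisely $\sigma(T_\gamma)=\sum_{i<j}\bar x_i\bar y_i\bar x_j\bar y_j$-type Boolean expansion of $\sum_i \bar x_i \bar y_i$ reduced appropriately. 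For a bounding pair $T_{c}T_{d}^{-1}$ of genus-$k$ subsurface with basis $x_i,y_i$ and class $[c]=z$,
$$\sigma(T_cT_d^{-1}) = \Bigl(\sum_{i=1}^k \bar x_i\bar y_i\Bigr)\bar z.$$
The genus-one separating twist gives $\bar x\bar y$, which lies in $\mathcal{K}_g^1$.

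The plan is as follows. First, find separating curves and bounding pairs that bound disks or annuli in $\mathcal{V}_g^1$, or more generally whose twists extend to $\mathcal{V}_g^1$. A twist about a curve bounding a disk in $\mathcal{V}_g^1$ extends. Also, $T_cT_d^{-1}$ extends if $c\cup d$ cobound an annulus in $\mathcal{V}_g^1$, which happens when $c,d$ are homologous to some $b_j$-type meridian combination. Concretely, I would take a separating meridian curve bounding a genus-one subsurface with basis $\{a_i,b_i\}$. This realizes $\ai\bi$ in $\mathcal{HK}_g^1$. Applying elements of $\mathcal{H}_g^1$ is equivariant for $\sigma$, with $\llsp$ acting as in Section \ref{section:llspaction}, and $\mathcal{H}_g^1$ normalizes $\mathcal{HK}_g^1$ and $\mathcal{HI}_g^1$. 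So I can move $\ai\bi$ around by the lower-triangular matrices, which send $a_i\mapsto a_i+\sum B_{ij}b_j$ and $a_i \mapsto a_i+a_j$, $b_j\mapsto b_j-b_i$ (the $A$-block). From $\ai\bi$, the move $a_i\mapsto a_i+b_j$ gives $\overline{(a_i+b_j)}\bi=\ai\bi+\bj\bi$, hence $\bi\bj$. The move $a_i\mapsto a_i+b_i$ gives $(\ai+\bi+1)\bi=\ai\bi$, which is trivial, so $\bi$ alone needs another source. The $A$-block move $a_i\mapsto a_i+a_j$, $b_j\mapsto b_j - b_i$ applied to $\aj\bj$ gives $\aj(\bj+\bi)+\ldots$ and hence $\aj\bi$. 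For $\bi$, I would use a genus-two separating meridian with basis $a_i,b_i,a_j,b_j$ and then modify by the move $a_i\mapsto a_i+b_j$, $a_j\mapsto a_j+b_i$ (symmetric $B$). Johnson's quartic-reduced formula produces $\hat i$ constant terms, and these should isolate $\bi$ after subtracting known terms. Checking which combination produces a lone $\bi$ is a computation I expect to need care.

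For $B_3^{b_i}$, I would use bounding pairs in $\mathcal{HI}_g^1$. Two disjoint meridians homologous to $b_k$ that cobound a genus-one subsurface with basis $a_i,b_i$ ($k\ne i$; this uses $g\ge 3$ for room) give $\ai\bi\bk$. The $\llsp$-orbit then yields the other cubic types by the same substitutions: $a_i\mapsto a_i+a_j$ gives $\ai\bj\bk$-type and $\aj\bi\bk$-type terms, $a_i\mapsto a_i+b_j$ gives $\bi\bj\bk$, and so on. Each new monomial is obtained as the image minus already-obtained elements. The types $\ai\aj\bi$ and $\ai\aj\bk$ need $a$-classes on the complementary side. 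For these I would take a bounding pair whose class is $b_k$ but whose subsurface has basis $(a_i, b_i+b_j)$-type after an $A$-move, or apply $A$-moves to the bounding pair so that the subsurface basis becomes $a_i+a_j$. Then I expand $\overline{x}\,\overline{y}\,\overline{z}$ using Lemma [BCJ, Lemma 4].

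The main obstacle is producing the monomials with two $\bar a$'s, namely $\ai\aj\bk$ and $\ai\aj\bi$, together with the isolated $\bi$. The $\llsp$ action only ever replaces $a$'s by $a+(\text{stuff})$ and never creates $a$'s from $b$'s. So these monomials must come from subsurfaces with $a$-heavy bases, and the bookkeeping of Lemma 4's constant terms must be done carefully there.
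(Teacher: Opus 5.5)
\textbf{There are genuine gaps.} Your overall strategy is the paper's: realize a few basis elements by explicit twists, then propagate them using $\sigma$-equivariance under $\llsp$. However, two steps the lemma needs are missing, and the substitutes you propose cannot work.

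\textbf{First gap: the isolated $\bi$.} Johnson's separating-twist formula is simply $\sum_i \bar c_i\bar d_i$. It has no quartic terms and no constants to harvest, and it is basis-independent. Since $Y_{ij}$ preserves $\langle a_i,b_i,a_j,b_j\rangle$, we get $Y_{ij}(\ai\bi+\aj\bj)=(\ai+\bj)\bi+(\aj+\bi)\bj=\ai\bi+\aj\bj$, so your genus-two route produces nothing new. The constant coming from $\hat{i}(a_j,b_j)=1$ is the right source, but it only survives when it multiplies a $\bar b$ of a different index. You tried $Y_{ii}$ on $\ai\bi$, where $\bi\cdot\bi=\bi$ absorbs it. Apply it instead to the element $\aj\bi$ ($i\neq j$), which you already have:
\[ Y_{jj}(\aj\bi)=(\aj+\bj+1)\bi=\aj\bi+\bi\bj+\bi. \]
This isolates $\bi$, because $\aj\bi$ and $\bi\bj$ are already in $\sigma(\mathcal{HK}_g^1)$.

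\textbf{Second, more serious gap: the two-$\bar a$ cubics.} You never exhibit an element of $\mathcal{HI}_g^1$ whose image involves $\ai\aj\bi$ or $\ai\aj\bk$. As you note, $\llsp$ preserves $\langle b_1,\dots,b_g\rangle$. So no monomial with two $\bar a$'s appears in the orbit of your meridian bounding pair $\ai\bi\bk$ (plus lower order terms); $A$-moves only yield things like $(\ai+\aj)\bi\bk$.

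What blocks you is your claim that an annulus twist requires the curves to be homologous to a combination of $b$'s. This is false, and the missing generator is exactly a bounding pair annulus twist in an $a$-class. Take $y$ representing $a_2$, and let $y'$ be the band sum of a push-off of $y$ with the separating meridian $x$ cutting off the first handle. Then $y\cup y'$ cobounds a genus-one subsurface with basis $a_1,b_1$ in $\Sigma_g^1$. It also cobounds an annulus in $\mathcal{V}_g^1$, made of a thin annulus, a band, and the disk bounded by $x$. Johnson's formula gives $\sigma(T_yT_{y'}^{-1})=\bar a_1\bar b_1\bar a_2+\bar a_1\bar b_1$. The second $\bar a$ comes from $\overline{[y]}$, not from an ``$a$-heavy'' subsurface.

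From $\ai\aj\bi$ everything else follows, as in the paper:
\begin{itemize}
\item $X_{ik}$ gives $\ai\aj\bk$;
\item $Y_{jj}$ gives $\ai\bi\bj$ modulo $\ai\bi$;
\item $Y_{jj}$ applied to $\ai\aj\bk$ gives $\ai\bj\bk$ modulo $\ai\bk$;
\item $Y_{ik}(\ai\bi\bj)$ gives $\bi\bj\bk$.
\end{itemize}
So your meridian bounding pair is not even needed.

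\textbf{Minor point.} The bounding pair formula used in the paper has the factor $\overline{[y]}+1$ rather than $\overline{[y]}$. The difference $\sum\bar c_i\bar d_i$ already lies in the image, so this is harmless.
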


As we discuss in more detail in Section \ref{section:llsp}, the groups $\sigma(\mathcal{HK}_g^1)$ and $\sigma(\mathcal{HI}_g^1)$ are closed under a natural action of $\llsp$. There is a mapping class $T_y T_{y'}^{-1}$ that normally generates $\mathcal{HI}_g^1$ in $\mathcal{H}_g^1$, and $\sigma(\mathcal{HI}_g^1)$ is exactly the $\llsp$-subspace of $B_3$ generated by $\sigma(T_y T_{y'}^{-1})$. We have the following lemma.

\begin{lemma} \label{lemma:llspsubspace}
    For $g \geq 3$, the space $B_3^{b_i}$ is an $\llsp$-subspace of $B_3$ which contains $\sigma(T_y T_{y'}^{-1})$.
\end{lemma}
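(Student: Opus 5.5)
Two things have to be checked: (i) $B_3^{b_i}$ is stable under the action of $\llsp$ on $B_3$, and (ii) $\sigma(T_y T_{y'}^{-1}) \in B_3^{b_i}$. For (i) I would first describe $B_3^{b_i}$ intrinsically. It is the span of all Boolean monomials of degree at most $3$ in the $\ai,\bi$ that contain at least one $\bar b$ factor. This is exactly the list $\mathcal{B}_3^{b_i}$, with products such as $\bi\bi$ collapsing to $\bi$ by square-freeness. Equivalently, it is the span of the products $\bar x_1\cdots\bar x_r\,\bar y$ with $r\le 2$, $x_j \in H$ arbitrary and $y \in L:=\F_2\langle b_1,\dots,b_g\rangle$. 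The step to verify is that each such product expands in the basis with every term containing a $\bar b$. This follows from the Lemma (\cite[Lemma 4]{BCJ}). Since $\hat i$ vanishes on $L$, the map $y\mapsto \bar y$ is additive on $L$, so $\bar y$ is a linear combination of the $\bi$ with no constant term. Expanding each $\bar x_j$ as a combination of $1,\ai,\bi$ then yields only monomials containing a $\bar b$.

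The $\llsp$ action is then easy to handle. Reducing Hirose's description mod $2$, any $M\in\llsp$ preserves $L$, because of the zero upper-right block. So $M\cdot(\bar x_1\cdots\bar x_r\bar y)=\overline{M x_1}\cdots\overline{M x_r}\,\overline{M y}$ with $My\in L$, and this lies in the spanning set again. Hence $B_3^{b_i}$ is an $\llsp$-subspace. The same argument shows that $B_2^{b_i}$ is an $\llsp$-subspace, which is presumably needed later for the $\mathcal{HK}_g^1$ statement.

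For (ii) I need the explicit curves $y,y'$. I expect $T_yT_{y'}^{-1}$ to be a bounding pair map of genus one, with $y,y'$ homologous, disjoint, each bounding a disk in $\mathcal V_g^1$, or at least with $y$ and $y'$ meridian-like. Johnson's formula for a bounding pair map whose pair cobounds a subsurface with symplectic basis $c_1,d_1,\dots,c_k,d_k$ and common class $x$ is
\[
\sigma(T_yT_{y'}^{-1})=\Bigl(\sum_{j=1}^k \bar c_j\bar d_j\Bigr)\bar x .
\]
I would choose the normal generator in the standard form: a genus-$1$ bounding pair separating, say, the first handle, with $x=[y]$. For $T_y$ to extend over the handlebody, $y$ must bound a disk, which forces $[y] \in L$; indeed $x=b_2$ or $x=0$ up to adapted coordinates. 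If $x\in L$, then $\sigma=\bar c_1\bar d_1\bar x$ lies in the spanning set described above, so it lies in $B_3^{b_i}$.

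The main obstacle is pinning down the precise normal generator $T_yT_{y'}^{-1}$ used in the paper, which should come from known normal generation results for $\mathcal{HI}_g^1$ in $\mathcal H_g^1$, and confirming that its homology class $x$ lies in $L$. If instead $y,y'$ are not separating-type meridians, I would compute $\sigma$ directly from Johnson's formula for the specific curves and check that each monomial carries a $\bar b$. The hypothesis $g\ge 3$ enters only to guarantee that this generator exists.
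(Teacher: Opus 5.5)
Your argument for $\llsp$-invariance is correct, and it takes a different route from the paper's.

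\textbf{Part (i): a cleaner argument than the paper's.} The paper proves invariance by brute force. For every element of $\mathcal{B}_3^{b_i}$ and every generator $X_{ij}$, $Y_{ij}$ of Proposition \ref{prop:llSpgenerators}, it expands the image and checks that each term again lies in $\mathcal{B}_3^{b_i}$. You instead describe $B_3^{b_i}$ intrinsically, as the span of products $\bar x_1\cdots\bar x_r\,\bar y$ with $r\le 2$ and $y\in L=\F_2\langle b_1,\dots,b_g\rangle$.

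\begin{itemize}
\item The description is valid because $y\mapsto\bar y$ is additive on the isotropic subspace $L$. So $\bar y$ is a sum of $\bar b_j$'s with no constant term, and every monomial in the expansion keeps a $\bar b$-factor.
\item Invariance is then immediate, since the action is multiplicative and every element of $\llsp$ preserves $L$.
\end{itemize}

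Your version needs no generating set and handles $B_2^{b_i}$ at the same time. It also explains why the paper's case tables close up. The paper's computation, by contrast, yields explicit formulas for $X_{ij}$ and $Y_{ij}$ on basis monomials, which it reuses throughout the later abelian-cycle arguments.

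\textbf{Part (ii): a false premise.} $T_yT_{y'}^{-1}$ is a bounding pair \emph{annulus} twist. The curves $y\cup y'$ cobound an annulus in $\mathcal{V}_g^1$, and only the product extends over the handlebody. Nothing forces $y$ to bound a disk. For the paper's normal generator, $[y]=a_2\notin L$. Indeed $T_y$ alone sends $b_2\mapsto b_2+a_2$ on $H$, so it does not preserve $L$ and is not in $\mathcal{H}_g^1$.

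You have also dropped a term from Johnson's formula. As used in the paper, it reads $\bigl(\sum_i\bar c_i\bar d_i\bigr)(\overline{[y]}+1)$. So both the element you compute, $\bar c_1\bar d_1\bar x$ with $x\in L$, and your reason for why it carries a $\bar b$ are wrong.

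\textbf{The repair.} It is easy within your own framework. The $\bar b$-factor comes from the subsurface, not from $[y]$: $S_{y,y'}$ has symplectic basis $a_1,b_1$ with $d_1=b_1\in L$. Hence
\[
\sigma(T_yT_{y'}^{-1})=\bar a_1\bar b_1(\bar a_2+1)=\bar a_1\bar a_2\bar b_1+\bar a_1\bar b_1 ,
\]
and both terms have your form $\bar x_1\cdots\bar x_r\,\bar y$ with $y=b_1$. This is exactly the value the paper computes in the proof of Lemma \ref{lemma:subset}, and it is all the containment requires. Your fallback of computing directly would arrive here, but as written the argument for (ii) is not valid.
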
 

Before we prove these lemmas, we describe in Section \ref{section:explicitformulas} how to explicitly evaluate the BCJ homomorphism on certain types of mapping classes. Then, in Section \ref{section:llsp}, we describe the action of $\llsp$ on $B_2^{b_i}$ and $B_3^{b_i}$.

\subsection{Explicit formulas} \label{section:explicitformulas}

A \emph{separating disk twist} is a Dehn twist $T_x$ about a separating simple closed curve $x$ on $\Sigma_g^1$ that separates $\Sigma_g^1$ and bounds an embedded disk in $\mathcal{V}_g^1$. Since all disk twists lie in $\mathcal{H}_g^1$ and all separating twists lie in $\mathcal{K}_g^1$, all separating disk twists lie in $\mathcal{HK}_g^1$. We have the following lemma.

\begin{lemma}[{Johnson, \cite[Lemma 12a]{BCJ}}] \label{lemma:septwist} Let $T_x$ be a separating disk twist and let $S_x$ be the component of $\Sigma_g^1 - x$ that does not contain $\partial \Sigma_g^1$. Choose a maximal symplectic basis $c_1, d_1, \dots, c_k, d_k$ for $H_1(S_x; \F_2) \subset H$. Then the BCJ homomorphism evaluates as $$\sigma(T_x) = \sum_{i=1}^k \bar{c}_i \bar{d}_i,$$ which does not depend on our choice of symplectic basis.
    
\end{lemma}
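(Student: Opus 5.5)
We would prove Lemma \ref{lemma:septwist} by computing each Birman--Craggs homomorphism on $T_x$ separately and then assembling the answers, using Johnson's description of $\sigma$ from Section \ref{section:BCJ}. Concretely, $\sigma(T_x)$ is the Boolean polynomial whose value at $\omega_h \in \Omega$ is $\mu(M(h,T_x))$. So it suffices to show that, for every Heegaard embedding $h$,
$$\mu(M(h,T_x)) = \sum_{i=1}^k \omega_h(c_i)\,\omega_h(d_i).$$
Reading the right-hand side as a function of $\omega = \omega_h$ gives exactly $\sum_i \bar{c}_i \bar{d}_i$. Since every $\omega \in \Omega$ arises as some $\omega_h$ (Section \ref{section:HeegaardandQFs}), this determines $\sigma(T_x)$.

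\textbf{Step 1: identify $M(h,T_x)$ as a surgery.} Regluing $\mathcal{A}$ to $\mathcal{B}$ by $hT_xh^{-1}$ is the same as cutting $\mathbb{S}^3$ along a collar of $h(x)$ and regluing by a twist. This is a Dehn surgery on the knot $K = h(x) \subset \mathbb{S}^3$ with framing $\pm 1$ relative to the surface framing that $h(\Sigma_g^1)$ induces on $K$. Because $x$ is separating, $K$ bounds the embedded surface $h(S_x)$. Hence the surface framing is the Seifert (zero) framing, and $M(h,T_x)$ is $\pm 1$-surgery on $K$.

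\textbf{Step 2: apply the surgery formula.} We then invoke the classical fact that the Rochlin invariant of $\pm1$-surgery on a knot $K$ equals the Arf invariant $\operatorname{Arf}(K)$. This can be checked by computing the signature of the trace 4-manifold and the characteristic surface, or it can simply be cited from the literature, e.g.\ Gonz\'alez-Acu\~na or Kirby's book.

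\textbf{Step 3: compute the Arf invariant.} We compute $\operatorname{Arf}(K)$ using the Seifert surface $h(S_x)$. The mod-2 Seifert quadratic form on $H_1(S_x;\F_2)$ is $y \mapsto \operatorname{lk}(h(y),h(y)^+) \bmod 2$. This is precisely the restriction of $\omega_h$ to $H_1(S_x;\F_2) \subset H$, since the same push-off is used in defining $\omega_h$. By the definition of the Arf invariant with respect to a symplectic basis $c_1,d_1,\dots,c_k,d_k$, we obtain $\operatorname{Arf}(K) = \sum_i \omega_h(c_i)\omega_h(d_i)$, as required.

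Independence of the choice of symplectic basis then follows from the well-definedness of the Arf invariant. It can also be checked directly in $B_3$. The symplectic group of $H_1(S_x;\F_2)$ is generated by transvections, and for a transvection such as $c_1 \mapsto c_1 + d_1$, the relation $\overline{(x+y)} = \bar{x}+\bar{y}+\hat{i}(x,y)$ together with $\bar{d}_1^2 = \bar{d}_1$ shows that the sum is unchanged.

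We expect Steps 1 and 2 to be the main obstacle: matching the twist regluing with a $\pm1$ Seifert-framed surgery, including the sign and framing conventions, and justifying the Rochlin $=$ Arf formula. Both are standard, and we would cite them rather than reprove them.
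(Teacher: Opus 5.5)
The paper does not prove this lemma; it only cites Johnson \cite[Lemma 12a]{BCJ}. Your argument is correct and is essentially the standard proof behind that citation: the Birman--Craggs identification of $\mu(M(h,T_x))$ with $\operatorname{Arf}(h(x))$ via $\pm 1$ Seifert-framed surgery, followed by computing that Arf invariant on the Seifert surface $h(S_x)$, whose mod-$2$ Seifert form is $\omega_h|_{H_1(S_x;\F_2)}$. The one point to phrase more carefully is your appeal to ``every $\omega \in \Omega$ equals some $\omega_h$'': for a genuine Heegaard embedding of the closed surface, $\omega_h$ vanishes on the meridian Lagrangian and so has Arf invariant $0$, so forms of Arf invariant $1$ require more general embeddings (e.g.\ $\Sigma_g^1$ as a subsurface of a higher-genus Heegaard surface). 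Your Steps 1--3 go through verbatim for these, because $T_x$ is supported in the interior of $\Sigma_g^1$.
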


A \emph{bounding pair annulus twist} is a product $T_y T_{y'}^{-1}$ for which $y$ and $y'$ are disjoint nonseparating simple closed curves on $\Sigma_g^1$ such that $y \cup y'$ separates $\Sigma_g^1$ and bounds an embedded annulus in $\mathcal{V}_g^1$. All bounding pair annulus twists lie in $\mathcal{HI}_g^1$ (in fact, they generate the entire group -- see \cite{Omori}). By \cite[Theorem 1.2]{Omori}, the bounding pair annulus twist $T_y T_{y'}^{-1}$ in Figure \ref{figure:curvesforBCJimage} normally generates $\mathcal{HI}_g^1$ in $\mathcal{H}_g^1$. We have the following lemma.

\begin{lemma}[{Johnson, \cite[Lemma 12b]{BCJ}}] Let $T_{y} T_{y'}^{-1}$ be a bounding pair annulus twist and let $S_{y,y'}$ be the component of $\Sigma_g^1 - (y \cup y')$ that does not contain $\partial \Sigma_g^1$. Choose a maximal symplectic basis $c_1, d_1, \dots, c_k, d_k$ for $H_1(S_{y,y'}; \F_2) \subset H$. Then the BCJ homomorphism evaluates as $$\sigma(T_y T_{y'}^{-1}) = \Big( \sum_{i=1}^k \bar{c}_i \bar{d}_i \Big)(\overline{[y]} + 1),$$ which does not depend on our choice of symplectic basis.
    
\end{lemma}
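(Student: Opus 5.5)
The plan is to compute, for each Heegaard embedding $h$ with quadratic form $\omega = \omega_h$, the Birman--Craggs value $\mu(M(h, T_yT_{y'}^{-1}))$. We then check that it equals the value of the Boolean polynomial $\big(\sum_i \bar c_i\bar d_i\big)(\overline{[y]}+1)$ at $\omega$, namely
$$\Big(\sum_{i=1}^k \omega(c_i)\omega(d_i)\Big)\big(\omega([y])+1\big).$$
Since Section \ref{section:BChomoms} says the BCJ homomorphism is determined by all the Birman--Craggs homomorphisms, with $\sigma(k)$ the function $\omega_h \mapsto \mu(M(h,k))$, this suffices. Independence of the symplectic basis is then automatic. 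Directly, $\sum_i \omega(c_i)\omega(d_i)$ is the Arf invariant of $\omega$ restricted to the nondegenerate part of $H_1(S_{y,y'};\F_2)$, and the Arf invariant is basis-independent. (Modulo $[y]$, which is the radical of that restricted form, any choice of maximal symplectic basis gives the same Arf value once we multiply by $\overline{[y]}+1$. Indeed, changing $c_i$ by $[y]$ changes the Arf sum by a multiple of $\omega([y])+1+\hat{i}$-terms that vanish when $\omega([y])=0$.)

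\textbf{Step 1: identify $M(h,k)$ as surgery.} Regluing by a product of Dehn twists along $h(y)$ and $h(y')$ is the same as doing $\pm1$-surgery, relative to the surface framing, on the two-component link $L = h(y)\cup h(y')^{\ast}$, with the second curve pushed slightly off the surface. Converting to Seifert framings uses the self-linking numbers. Mod $2$ these are $\omega([y])$ and $\omega([y']) = \omega([y])$, since $[y]=[y']$ in $H$.

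\textbf{Step 2: handle slide to a knot.} Slide $h(y')$ over $h(y)$. Because $y\cup y'$ cobounds the genus-$k$ subsurface $S_{y,y'}$, the slid component becomes a knot $K$ with a Seifert surface obtained from $h(S_{y,y'})$ by a band. It is $0$-framed and algebraically unlinked from a small meridian-type component. Blowing down the remaining $\pm1$ component leaves $M$ as integral surgery on $K$ with framing determined mod $2$ by $\omega([y])$.

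\textbf{Step 3: the Rochlin invariant.} Use the standard fact that $\pm1$-surgery on a knot has Rochlin invariant equal to $\operatorname{Arf}(K)$ mod $2$. By contrast, when the framing is even, the manifold is not a homology sphere unless it is rearranged. The parity bookkeeping is what should produce the factor $(\omega([y])+1)$: when $\omega([y])=1$ the configuration simplifies to surgery on a trivial knot, giving Rochlin invariant $0$. When $\omega([y])=0$, we compute $\operatorname{Arf}(K)$ from the Seifert form on the surface, which is $h(S_{y,y'})$ plus a band. Its mod-$2$ self-linkings are exactly $\omega$ on $c_i,d_i$, and the extra hyperbolic pair coming from $[y]$ and the band curve contributes $\omega([y])\cdot(\ast)=0$. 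This gives $\sum_i\omega(c_i)\omega(d_i)$.

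\textbf{Main obstacle.} The hard part will be the framing and parity bookkeeping in Steps 2--3, i.e.\ justifying the dichotomy on $\omega([y])$ cleanly. As a fallback I would reduce to Lemma \ref{lemma:septwist} rather than doing Kirby calculus. Choose a separating curve $x$ with $S_x \supset S_{y,y'}$ so that $T_yT_{y'}^{-1}$ times suitable separating twists becomes a genus-$1$ bounding pair map. Then compute that genus-$1$ case by a single explicit Heegaard picture, using $\sigma$'s homomorphism property and $\operatorname{Sp}_{2g}(\F_2)$-equivariance to propagate the formula.
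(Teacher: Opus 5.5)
Your overall strategy is sound. The value of $\sigma(k)$ at $\omega_h$ is $\mu(M(h,k))$, so proving the pointwise identity suffices, and basis-independence then follows. Your direct argument for basis-independence is garbled; the clean version is that replacing $c_i$ by $c_i+[y]$ changes $\omega(c_i)$ by $\omega([y])$, and the factor $\omega([y])+1$ kills that change. Since the paper only quotes Johnson, your topological computation has to stand on its own, and Step 2 does not.

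Write $n=\operatorname{lk}(h(y),h(y)^+)$, so $n\equiv\omega([y]) \pmod 2$. In Seifert framings the two components have framings $n+\epsilon$ and $n-\epsilon$ with $\epsilon=\pm1$. They also have linking number $n$ with each other. You never record this, but it is what drives the dichotomy. Sliding $h(y')$ off $h(y)$ gives $K$ with framing $0$, as you say, but with $\operatorname{lk}(K,h(y))=-\epsilon$, not $0$. The other component $h(y)$ still has framing $n+\epsilon$ and is in general knotted, so it cannot be blown down. In any case, integral surgery on a single knot is a homology sphere only for framing $\pm1$, which is exactly the inconsistency you hit at the start of Step 3. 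Likewise, ``when $\omega([y])=1$ the configuration simplifies to surgery on a trivial knot'' is asserted, not shown.

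The missing ingredient is a way to compute $\mu$ for surgery on a link whose components are not $\pm1$-framed unknots. The standard characteristic-sublink formula does this: $\mu(\mathbb{S}^3_L)\equiv \tfrac18(\operatorname{sign}\Lambda - C\cdot C)+\operatorname{Arf}(C) \pmod 2$, where $C$ is the characteristic sublink. After your slide, $\Lambda=\bigl(\begin{smallmatrix} n+\epsilon & -\epsilon \\ -\epsilon & 0\end{smallmatrix}\bigr)$. It has determinant $-1$, hence signature $0$.
\begin{itemize}
\item If $n$ is odd, $\Lambda$ is even and $C=\emptyset$. So $\mu=0$, since the trace is spin with signature $0$.
\item If $n$ is even, $C=\{K\}$ and $C\cdot C=0$, so $\mu=\operatorname{Arf}(K)$. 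Your Step 3 band computation then correctly gives $\sum_i\omega(c_i)\omega(d_i)$. This uses that, once $h(y)$ is forgotten, the extra twist of the framed push-off unwinds, so $K$ is isotopic to a band sum of the two boundary curves of $h(S_{y,y'})$.
\end{itemize}
Alternatively, skip the slide and apply the formula directly to $h(y)\cup h(y')$, oriented as $\partial h(S_{y,y'})$. Then $\Lambda=\bigl(\begin{smallmatrix} n+\epsilon & -n \\ -n & n-\epsilon\end{smallmatrix}\bigr)$. For $n$ odd, $C=\emptyset$. For $n$ even, $C$ is the whole link with $C\cdot C=0$, and the Arf invariant of this proper link can be read off from the Seifert surface $h(S_{y,y'})$. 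Its mod-$2$ Seifert form is $\omega_h$, with radical spanned by $[y]$.

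Your fallback does not avoid this step. $T_yT_{y'}^{-1}$ does factor as a product of genus-one bounding pair maps whose curves are all homologous to $y$, with no separating twists needed. But the genus-one value is still a function on all of $\Omega$. Pinning it down requires the same Rochlin computation on both sides of the $\omega([y])$ dichotomy, not a single Heegaard picture.
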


\subsection{Action of $\llsp$} \label{section:llsp} The map $\sigma: \mathcal{HI}_g^1 \rightarrow B_3$ is $\mathcal{H}_g^1$-equivariant in the following way. The handlebody group acts on its normal subgroup $\mathcal{HI}_g^1$ by conjugation, and it acts on $B_3$ via its action on $H := H_1(\Sigma_g^1; \F_2)$. 
The analogous statement holds for $\mathcal{HK}_g^1$. In particular, the subspaces $\sigma(\mathcal{HI}_g^1)$ and $\sigma(\mathcal{HK}_g^1)$ are closed under the action of $\llsp$ as described in Section \ref{section:llspaction}. We have the following proposition.

\begin{proposition}[Generators for $\llsp$] \label{prop:llSpgenerators}
    The group $\llsp$ is generated by the symplectic automorphisms $$X_{ij}: \; a_j \mapsto a_j + a_i, \; \; b_i \mapsto b_i + b_j$$ for $1 \leq i \neq j \leq g$; and $$Y_{ij}: \; a_i \mapsto a_i + b_j, \; \; a_j \mapsto a_j + b_i$$ for\footnote{Note that we include automorphisms of the form $Y_{ii}$.} $1 \leq i,j \leq g$; where basis elements of $H$ not mentioned are assumed to be fixed.
\end{proposition}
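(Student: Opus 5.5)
The proposition concerns $\llsp$, the mod-$2$ reduction of $\ell\ell Sp_{2g}(\Z)$. I read this as the group of matrices $\begin{bmatrix} A & 0 \\ B & (A^t)^{-1}\end{bmatrix}$ with $A \in \operatorname{GL}_g(\F_2)$ and $B$ symmetric, acting on the ordered basis $a_1,\dots,a_g,b_1,\dots,b_g$. The plan is to split $\llsp$ as a semidirect product and generate each factor separately.

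First, multiplying out block matrices, every element factors uniquely as
$$\begin{bmatrix} A & 0 \\ B & (A^t)^{-1}\end{bmatrix} = \begin{bmatrix} I & 0 \\ B A^{-1} & I\end{bmatrix}\begin{bmatrix} A & 0 \\ 0 & (A^t)^{-1}\end{bmatrix}.$$
One checks that $S := BA^{-1}$ is symmetric, since symplecticity forces $A^tB$ symmetric, and hence $BA^{-1} = (A^t)^{-1}(A^tB)A^{-1}$ is symmetric. Thus $\llsp = N \rtimes L$, where
\begin{itemize}
\item $N \cong \{S \in M_g(\F_2) : S = S^t\}$ is the abelian group of lower unipotent matrices, and
\item $L \cong \operatorname{GL}_g(\F_2)$ is the Levi factor.
\end{itemize}
It therefore suffices to show that the $X_{ij}$ generate $L$ and that the $Y_{ij}$ generate $N$.

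For $N$: the map $S \mapsto \begin{bmatrix} I & 0 \\ S & I\end{bmatrix}$ is a homomorphism from symmetric matrices under addition. I will verify directly from the formulas that $Y_{ij}$ (for $i\neq j$) corresponds to the symmetric elementary matrix $E_{ij}+E_{ji}$, and that $Y_{ii}$ corresponds to $E_{ii}$, up to a consistent transpose convention. These matrices form an $\F_2$-basis of the symmetric matrices, so the $Y_{ij}$ generate $N$. I should also confirm that each $Y_{ij}$ preserves $\hat{i}$. For $i \neq j$ this amounts to $\hat{i}(a_i+b_j,\,a_j+b_i) = \hat{i}(a_i,b_i)+\hat{i}(b_j,a_j) = 0$, which holds mod $2$.

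For $L$: $X_{ij}$ is the block-diagonal matrix $\operatorname{diag}(A, (A^t)^{-1})$ with $A = I + E_{ij}$ a transvection on $\langle a_1,\dots,a_g\rangle$. The action on the $b$'s is then forced: $(A^t)^{-1} = I + E_{ji}$ over $\F_2$, which gives $b_i \mapsto b_i + b_j$, matching the stated formula. I will check this carefully against the duality $\hat{i}(a_k,b_l)=\delta_{kl}$. Over a field, $\operatorname{GL}_g(\F_2) = \operatorname{SL}_g(\F_2)$ is generated by elementary transvections $I+E_{ij}$ with $i\neq j$, by row reduction. Hence the $X_{ij}$ generate $L$.

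Combining the two factors, the $X_{ij}$ and $Y_{ij}$ generate $N \rtimes L = \llsp$. The main obstacle is the bookkeeping in the semidirect decomposition (confirming that $BA^{-1}$ is symmetric) and matching the conventions for how the stated automorphisms act on coordinates. There is no conceptual difficulty beyond that.
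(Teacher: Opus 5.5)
Your proof is correct, and it takes a different route from the paper. The paper gives no argument of its own and simply refers to O'Meara's treatment of symplectic groups (Chapter 2.2). You instead prove the statement directly from the Levi decomposition $\llsp = N \rtimes L$:
\begin{itemize}
\item the $Y_{ij}$ realize the basis $E_{ij}+E_{ji}$, $E_{ii}$ of the symmetric matrices parametrizing $N$;
\item the $X_{ij} = \operatorname{diag}(I+E_{ij},\, I+E_{ji})$ realize the elementary transvections generating $L \cong \operatorname{GL}_g(\F_2) = \operatorname{SL}_g(\F_2)$.
\end{itemize}
These computations all check out in the column convention, where the block shape means that $\langle b_1,\dots,b_g\rangle$ is preserved.

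The citation is shorter and places the result inside the general structure theory. Your argument is self-contained and verifies the paper's specific formulas; for instance, $X_{ij}$ is forced to send $b_i \mapsto b_i+b_j$ because $(A^t)^{-1} = I+E_{ji}$. It also explains the footnote about $Y_{ii}$. Over $\F_2$, the symmetric matrices with zero diagonal are preserved by the congruence action $S \mapsto (A^t)^{-1} S A^{-1}$ of $L$ on $N$. Hence the $X_{ij}$ together with the $Y_{ij}$ for $i \neq j$ only generate a proper subgroup.

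One correction to your setup concerns the description of the group as block matrices with ``$B$ symmetric'', which echoes the paper's description of $\ell\ell\operatorname{Sp}_{2g}(\Z)$. This is not the right condition: such matrices need not be symplectic, and the set is not closed under composition. For example, $Y_{11}$ followed by $X_{12}$ sends $a_1 \mapsto a_1+b_1+b_2$ and $a_2 \mapsto a_1+a_2$, so its lower-left block $E_{11}+E_{21}$ is not symmetric. The correct group is the stabilizer of $\langle b_1,\dots,b_g\rangle$ in $\operatorname{Sp}_{2g}(\F_2)$, cut out by the condition that $A^tB$ be symmetric. That is exactly the condition your factorization uses, so state the definition that way and the argument is complete.
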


See \cite[Chapter 2.2]{Omeara}.

\subsection{Proof of Lemma \ref{lemma:subset}} We are now ready to prove Lemma \ref{lemma:subset}.

\begin{proof} First, we show $B_2^{b_i} \subset \sigma(\mathcal{HK}_g^1)$. 

\begin{figure}[htbp]
    \centering
    \includegraphics[scale=.5]{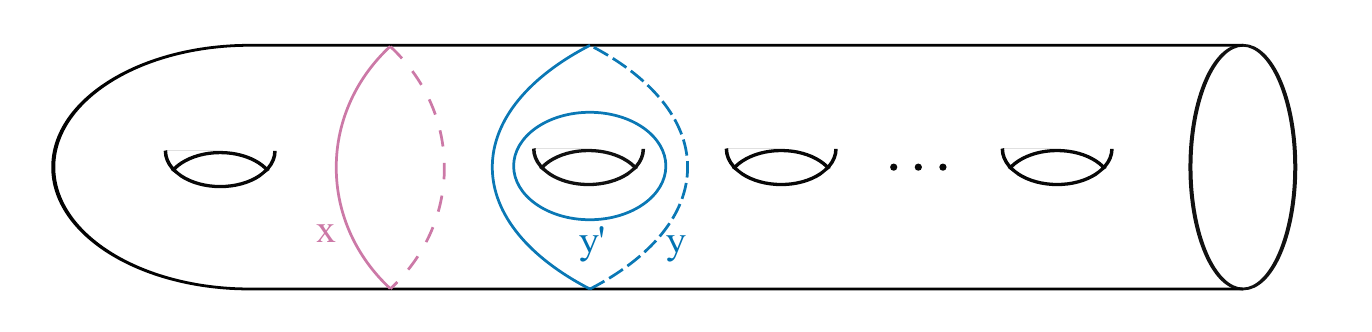}
    \caption{A separating disk twist $T_x$ and a bounding pair annulus twist $T_y T_{y'}^{-1}$}
    \label{figure:curvesforBCJimage}
\end{figure}

Figure \ref{figure:curvesforBCJimage} shows a separating simple closed curve $x$ that bounds a disk in $\mathcal{V}_g^1$, so by Lemma \ref{lemma:septwist} we have $$\sigma(T_x) = \bar{a}_1 \bar{b}_1.$$ 
We leverage the fact that $\operatorname{im}(\sigma)$ is closed under the action of $\llsp$. Since $\llsp$ contains all automorphisms that permute generators\footnote{Such an automorphism can be realized by an element of $\mathcal{H}_g^1$ that interchanges the $i^{\text{th}}$ and $j^{\text{th}}$ handles of the handlebody. See \cite{Suzuki}.} $a_i \leftrightarrow a_j$ and $b_i \leftrightarrow b_j$, the fact that $\bar{a}_1 \bar{b}_1 \in \sigma(\mathcal{HK}_g^1)$ implies $\boxed{\ai \bi} \in \sigma(\mathcal{HK}_g^1)$ for all $1 \leq i \leq g$. Next, we calculate $$X_{ij}(\ai \bi) = \ai \bi + \ai \bj.$$ Since $\ai \bi \in \sigma(\mathcal{HK}_g^1)$, we must have $\boxed{\ai \bj} \in \sigma(\mathcal{HK}_g^1)$ as well. We now show that the remaining basis elements of $B_2^{b_i}$ lie in $\sigma(\mathcal{HK}_g^1)$, building on previous work as we go. We have $$Y_{ik}(\ak \bj) = \ak \bj + \boxed{\bi \bj}$$ and $$Y_{jj}(\aj \bi) = \aj \bi + \bi \bj + \boxed{\bi}.$$ We have now shown $B_2^{b_i} \subset \sigma(\mathcal{HK}_g^1)$. It remains to show that the degree-3 generators of $B_3^{b_i}$ lie in $\sigma(\mathcal{HI}_g^1)$. Consider the bounding pair annulus twist $T_y T_{y'}^{-1}$ in Figure \ref{figure:curvesforBCJimage}. We have $$\sigma(T_y T_{y'}^{-1}) = \bar{a}_1 \bar{b}_1 \bar{a}_2 + \bar{a}_1 \bar{b}_1,$$ so all basis elements of the form $\boxed{\ai \aj \bi}$ lie in $\sigma(\mathcal{HI}_g^1)$. Next, we have \begin{gather*}
X_{ik}(\ai \aj \bi) = \ai \aj \bi + \boxed{\ai \aj \bk} \\
Y_{jj}(\ai \aj \bi) = \ai \aj \bi + \boxed{\ai \bi \bj} + \ai \bi \\
Y_{jj}(\ai \aj \bk) = \ai \aj \bk + \boxed{\ai \bj \bk} + \ai \bk \\
Y_{ik}(\ai \bi \bj) = \ai \bi \bj + \boxed{\bi \bj \bk}. \qedhere \end{gather*} \end{proof}

\subsection{Proof of Lemma \ref{lemma:llspsubspace}} We are now ready to prove Lemma \ref{lemma:llspsubspace}.

\begin{proof} Recall our basis $\mathcal{B}_3^{b_i}$ for $B_3^{b_i}$ defined at the beginning of Section \ref{section:handlebodyBCJ}. We also recall from Proposition \ref{prop:llSpgenerators} that $\llsp$ is generated by the automorphisms $X_{ij}$ (for $1 \leq i \neq j \leq g$) and $Y_{ij}$ (for $1 \leq i, j \leq g$). It suffices to show that, for each $v \in \mathcal{B}_3^{b_i}$, we can express all $X_{ij}(v)$ and $Y_{ij}(v)$ as linear combinations of elements in $\mathcal{B}_3^{b_i}$. 

$\boxed{\ai \bi}$: We have 
\begin{gather*}
    X_{ij}(\ak \bk) = \begin{cases} 
     \ak \bk & i,j \neq k \\
     \ak \overline{(b_k + b_j)} = \ak \bk + \ak \bj & i=k \\
     \overline{(a_k + a_i)} \bk = \ak \bk + \ai \bk & j=k
   \end{cases}  \end{gather*} and \begin{gather*}
    Y_{ij}(\ak \bk) = \begin{cases} 
     \ak \bk & i,j \neq k \\
     \overline{(a_k + b_j)} \bk = \ak \bk + \bj \bk & i=k \text{ and } j \neq k \\
     \overline{(a_k + b_i)} \bk = \ak \bk + \bi \bk & i \neq k \text{ and } j=k \\ 
     \overline{(a_k + b_k)} \bk = (\ak + \bk +1) \bk = \ak \bk & i=j=k \end{cases}.  \end{gather*}

$\boxed{\ai \bj}$: We have \begin{gather*}
    X_{ij}(\ak \bl) = \begin{cases} 
     \ak \bl & i \neq \ell \text{ and } j \neq k  \\
     \ak \overline{(b_{\ell} + b_j)} = \ak \bl + \ak \bj & i= \ell \text{ and } j \neq k \\
     \overline{(a_k + a_i)} \bl = \ak \bl + \ai \bl & i \neq \ell \text{ and } j=k \\
     \overline{(a_k + a_{\ell})} \; \overline{(b_{\ell} + b_k)} = \ak \bl + \ak \bk + \al \bl + \al \bk & i=\ell \text{ and } j=k
   \end{cases}  \end{gather*} and \begin{gather*}
    Y_{ij}(\ak \bl) = \begin{cases} 
     \ak \bl & i,j \neq k  \\
     \overline{(a_k + b_j)} \bl = \ak \bl + \bj \bl & i=k \text{ and } j \neq k \\
     \overline{(a_k + b_i)} \bl = \ak \bl + \bi \bl & i \neq k \text{ and } j=k \\
     \overline{(a_k + b_k)} \bl = (\ak + \bk + 1) \bl = \ak \bl + \bk \bl + \bl & i=j=k
   \end{cases}.  \end{gather*}

$\boxed{\bi}$: We have \begin{gather*}
    X_{ij}(\bk) = \begin{cases} 
     \bk & i \neq k \\
     \bk + \bj & i=k
   \end{cases} 
\end{gather*} and $Y_{ij}(\bk) = \bk.$

$\boxed{\bi \bj}$: We have \begin{gather*}
    X_{ij}(\bk \bl) = \begin{cases} 
     \bk \bl & i \neq k, \ell  \\
     \bk \bl + \bj \bl & i=k \\
     \bk \bl + \bj \bk & i=\ell
   \end{cases}  \end{gather*} and $Y_{ij}(\bk \bl) = \bk \bl.$

$\boxed{\bi \bj \bk}$: We have \begin{gather*}
    X_{ij}(\bk \bl \bm) = \begin{cases} 
     \bk \bl \bm & i \neq k, \ell, m  \\
     \bk \bl \bm + \bj \bl \bm  & i=k \\
     \bk \bl \bm + \bj \bk \bm & i=\ell \\
     \bk \bl \bm + \bj \bk \bl & i=m
   \end{cases}  \end{gather*} and $Y_{ij}(\bk \bl \bm) = \bk \bl \bm.$ 

$\boxed{\ai \aj \bi}$: We have \begin{gather*}
    X_{ij}(\ak \al \bk) = \begin{cases} 
     \ak \al \bk & i \neq k \text{ and } j \neq k, \ell  \\
     \ak \al \bk + \ai \al \bk & j=k \\
     \ak \al \bk + \ai \ak \bk & i \neq k \text{ and } j=\ell \\
    \ak \al \bk + \ak \al \bj & i=k \text{ and } j \neq \ell \\
     \ak \al \bk + \ak \al \bl + \ak \bk + \ak \bl  & i=k \text{ and } j=\ell \\
   \end{cases}.  \end{gather*} Now, observe that the map $Y_{ij}$ acts on $\ak$ in the following way:  \begin{equation} \label{eq:Yij(ak)} Y_{ij}(\ak) = \begin{cases} 
   \ak & i,j \neq k \\
   \ak + \bj & i=k \text{ and } i \neq j \\
   \ak + \bi & i \neq j \text{ and } j=k \\
   \ak + \bk +1 & i=j=k
   \end{cases}. \end{equation} The map $Y_{ij}$ acts on $\al$ in the analogous way. So then we have $$Y_{ij}(\ak \al \bk) = \bar{c} \; \bar{d} \; \bk$$ for $\bar{c} \in \{\ak, \; \ak + \bj, \; \ak + \bi, \; \ak + \bk +1 \}$ and $\bar{d} \in \{ \al, \; \al + \bj, \; \al + \bi, \; \al + \bl +1 \}$, and so $Y_{ij}(\ak \al \bk)$ is a linear combination of elements in $\mathcal{B}_3^{b_i}$. 

$\boxed{\ai \aj \bk}$: We have \begin{gather*}
    X_{ij}(\ak \al \bm) = \begin{cases} 
     \ak \al \bm & i \neq m \text{ and } j \neq k, \ell  \\
     \ak \al \bm + \ai \al \bm & i \neq m \text{ and } j=k \\
     \ak \al \bm + \ai \ak \bm & i \neq m \text{ and } j=\ell \\
     \ak \al \bm + \ak \al \bj & i=m \text{ and } j \neq k,\ell \\
     \ak \al \bm + \ak \al \bk + \al \am \bm + \al \am \bk  & i=m \text{ and } j=k \\
     \ak \al \bm + \ak \al \bl + \ak \am \bm + \ak \am \bl  & i=m \text{ and } j=\ell \\
   \end{cases}.  \end{gather*} By Equation \eqref{eq:Yij(ak)}, we have $$Y_{ij}(\ak \al \bm) = \bar{c} \; \bar{d} \; \bm$$ for $\bar{c} \in \{\ak, \; \ak + \bj, \; \ak + \bi, \; \ak + \bk +1 \}$ and $\bar{d} \in \{ \al, \; \al + \bj, \; \al + \bi, \; \al + \bl +1 \}$, and so $Y_{ij}(\ak \al \bm)$ is a linear combination of elements in $\mathcal{B}_3^{b_i}$. 

$\boxed{\ai \bi \bj}$: We have \begin{gather*}
    X_{ij}(\ak \bk \bl) = \begin{cases} 
     \ak \bk \bl & i,j \neq k \text{ and } i \neq \ell \\
     \ak \bk \bl + \ai \bk \bl & i \neq \ell \text{ and } j=k \\
     \ak \bk \bl + \ak \bj \bl & i=k \\
     \ak \bk \bl + \ak \bj \bk & i=\ell \text{ and } j \neq k \\
     \ak \bk \bl + \ak \bk + \al \bk \bl + \al \bk & i=\ell \text{ and } j=k
   \end{cases}  \end{gather*} and \begin{gather*}
       Y_{ij}(\ak \bk \bl) = \begin{cases} 
     \ak \bk \bl & i,j \neq k \\
     \ak \bk \bl + \bj \bk \bl & i=k \text{ and } j \neq k \\
     \ak \bk \bl + \bi \bk \bl & i \neq k \text{ and } j = k \\
     \ak \bk \bl & i=j=k
   \end{cases}.
   \end{gather*} 
   
$\boxed{\ai \bj \bk}$: We have \begin{gather*}
    X_{ij}(\ak \bl \bm) = \begin{cases} 
     \ak \bl \bm & i \neq \ell, m \text{ and } j \neq k \\
     \ak \bl \bm + \ak \bj \bm & i=\ell \text{ and } j \neq k \\
     \ak \bl \bm + \ak \bl \bj & i=m \text{ and } j \neq k \\
     \ak \bl \bm + \ai \bl \bm & i \neq \ell,m \text{ and } j=k \\
     \ak \bl \bm + \ak \bk \bm + \al \bl \bm + \al \bk \bm & i=\ell \text{ and } j=k \\
     \ak \bl \bm + \ak \bk \bl + \am \bl \bm + \am \bk \bl & i=m \text{ and } j=k
   \end{cases}
\end{gather*} and \begin{gather*}
    Y_{ij}(\ak \bl \bm) = \begin{cases} 
     \ak \bl \bm & i,j \neq k \\
     \ak \bl \bm + \bj \bl \bm & i=k \text{ and } j \neq k \\
     \ak \bl \bm + \bi \bl \bm & i \neq k \text{ and } j=k \\
     \ak \bl \bm + \bk \bl \bm + \bl \bm & i=j=k
   \end{cases} . \qedhere
\end{gather*} \end{proof}

\subsection{Dimension counting} We have the following proposition.

\begin{proposition} \label{prop:imageBCJdimension}
    For $g \geq 3$, we have $$\operatorname{dim}(\sigma(\mathcal{HI}_g^1)) = \frac{7g^3 + 5g}{6} \quad \quad \text{and} \quad \quad \operatorname{dim}(\sigma(\mathcal{HK}_g^1)) = \frac{3g^2 + g}{2}.$$ 
\end{proposition}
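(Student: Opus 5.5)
By Theorem \ref{theoremA}, which follows from Lemma \ref{lemma:subset} and Lemma \ref{lemma:llspsubspace}, we have $\sigma(\mathcal{HI}_g^1) = B_3^{b_i}$ and $\sigma(\mathcal{HK}_g^1) = B_2^{b_i}$ for $g \geq 3$. So the proposition reduces to computing $\dim B_3^{b_i}$ and $\dim B_2^{b_i}$. The plan has two steps: show that the spanning sets $\mathcal{B}_3^{b_i}$ and $\mathcal{B}_2^{b_i}$ are linearly independent, and then count their elements.

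\emph{Linear independence.} Since $\{1, \bar{a}_1, \dots, \bar{a}_g, \bar{b}_1, \dots, \bar{b}_g\}$ is a basis of the linear functions on $\Omega$, the square-free monomials of degree at most $3$ in the $2g$ variables $\bar{a}_i, \bar{b}_i$ (including the constant $1$) form a basis of $B_3$. This is Johnson's description of $B_3$ as the space of Boolean polynomials, and I take it as given. Every element of $\mathcal{B}_3^{b_i}$ is such a monomial. It remains to check that no monomial is listed twice, which uses the index conventions:
\begin{itemize}
\item in $\ai \aj \bk$ we have $i<j$ and $k \neq i,j$, so it is distinct from the type $\ai \aj \bi$;
\item in $\ai \bj \bk$ we have $j<k$ and $i \neq j,k$, so it is distinct from the type $\ai \bi \bj$.
\end{itemize}
Hence $\mathcal{B}_3^{b_i}$, and therefore also $\mathcal{B}_2^{b_i} \subset \mathcal{B}_3^{b_i}$, is linearly independent. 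So the dimensions equal the cardinalities.

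\emph{Counting $\mathcal{B}_2^{b_i}$.} There are $g$ elements $\ai\bi$, $g(g-1)$ elements $\ai\bj$ with $i \neq j$, $g$ elements $\bi$, and $\binom{g}{2}$ elements $\bi\bj$. The total is
\[ g^2+g+\tfrac{g^2-g}{2}=\tfrac{3g^2+g}{2}. \]

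\emph{Counting the degree-$3$ elements.} There are:
\begin{itemize}
\item $g(g-1)$ elements $\ai\aj\bi$ (ordered pairs with $i \neq j$);
\item $\binom{g}{2}(g-2)$ elements $\ai\aj\bk$;
\item $g(g-1)$ elements $\ai\bi\bj$;
\item $g\binom{g-1}{2}$ elements $\ai\bj\bk$;
\item $\binom{g}{3}$ elements $\bi\bj\bk$.
\end{itemize}
This sums to $2g(g-1)+\tfrac{7}{6}g(g-1)(g-2)=\tfrac{7g^3-9g^2+2g}{6}$. Adding $\tfrac{3g^2+g}{2}=\tfrac{9g^2+3g}{6}$ gives $\tfrac{7g^3+5g}{6}$, as claimed.

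The only real subtlety is the independence step. One must use that the Boolean monomials form a basis, rather than, say, the products $\bar{x}\bar{y}$ for arbitrary $x,y \in H$, which satisfy relations such as $\overline{(x+y)} = \bar{x}+\bar{y}+\hat{i}(x,y)$. One must also make sure the index restrictions in the definition of $\mathcal{B}_3^{b_i}$ produce no double counting. After that, the result is pure arithmetic.
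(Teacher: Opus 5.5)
Your proposal is correct and follows essentially the paper's route: reduce via Theorem \ref{theoremA} to counting the spanning sets $\mathcal{B}_r^{b_i}$, and use Johnson's fact that the square-free monomials in the $\bar{a}_i, \bar{b}_i$ form a basis of $B_r$ to conclude that these sets are linearly independent. The only difference is in the counting. The paper counts by complement, computing $\sum_{i=0}^r \bigl(\binom{2g}{i}-\binom{g}{i}\bigr)$ as all monomials minus those with no $\bar{b}$'s. You instead enumerate each type directly, and your totals agree with the paper's.
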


\begin{proof} By Theorem \ref{theoremA}, we have $\sigma(\mathcal{HK}_g^1) = B_2^{b_i}$ and $\sigma(\mathcal{HI}_g^1) = B_3^{b_i}$. For $r \in \{2,3 \}$, we compare the dimension of $B_r^{b_i}$ with that of $B_r$. We have $$\operatorname{dim}(B_r) = \sum_{i=0}^r \binom{2g}{i}.$$ See \cite[Theorem 6]{BCJ}. As defined at the beginning of Section \ref{section:handlebodyBCJ}, the spaces $B_2^{b_i}$ and $B_3^{b_i}$ have basis sets $\mathcal{B}_2^{b_i}$ and $\mathcal{B}_3^{b_i}$, respectively. A basis for $B_2$ is $$\mathcal{B}_2^{b_i} \cup \Big\{1, \; \ai \; (1 \leq i \leq g), \; \ai \aj \; (1 \leq i < j \leq g) \Big\} $$ and a basis for $B_3$ is $$\mathcal{B}_3^{b_i} \cup \Big\{1, \; \ai \; (1 \leq i \leq g), \; \ai \aj \; (1 \leq i < j \leq g), \; \ai \aj \ak \; (1 \leq i < j < k \leq g) \Big\}. $$ In particular, $B_r$ has $\sum_{i=0}^r {g \choose i}$ more generators than $B_r^{b_i}$. Hence $B_r^{b_i}$ has dimension $$\sum_{i=0}^r \left( \binom{2g}{i} - \binom{g}{i} \right),$$ and our result follows. \end{proof}

\section{Group cohomology and abelian cycles} \label{section:homology}

Let $G = \mathcal{HI}_g^1$ and $B = B_3^{b_i}$ or $G = \mathcal{HK}_g^1$ and $B = B_2^{b_i}$. By Theorem \ref{theoremA}, we have the surjective BCJ homomorphism $\sigma: G \rightarrow B$. For the remainder of the paper, we study the induced map $$\sigma^*: H^2(B; \F_2) \rightarrow H^2(G; \F_2)$$ following Brendle--Farb \cite{BrendleFarb}. To do this, we dualize and study the induced map on homology $$\sigma_*: H_2(G; \F_2) \rightarrow H_2(B; \F_2).$$ The group $H_2(B; \F_2)$ is isomorphic to the dual of $H^2(B; \F_2)$, and if $\xi$ lies in $\operatorname{im}(\sigma_*) \subset H_2(B; \F_2)$, then its dual $\xi^{\star}$ is \emph{not} in $\operatorname{ker}(\sigma^*) \subset H^2(B; \F_2)$. See \cite[Section 5]{BrendleFarb}. By constructing elements in $\operatorname{im}(\sigma_*)$, we bound from below the size of $\operatorname{im}(\sigma^*)$ as in Theorems \ref{maintheorem:HK} and \ref{maintheorem:HI}. As we set up in the following subsections, we use the method of \emph{abelian cycles}. Pairs of commuting elements $f,h \in G$ will give us abelian cycles $\{ f,h \} \in H_2(G; \F_2)$. For many abelian cycles, we compute $\sigma_*(\{ f,h \}) = \xi$ in $\operatorname{im}(\sigma_*)$. The dual $\xi^{\star}$ then maps under $\sigma^*$ to the corresponding cup product in $H^2(G; \F_2)$.

\subsection{Homology of $B$} Recall that we have $B = B_2^{b_i}$ or $B = B_3^{b_i}$. Since\footnote{We calculate $N$ in Proposition \ref{prop:imageBCJdimension}.} $B \cong \bigoplus_N \F_2$, we have $H_1(B; \Z) \cong B$ and $\operatorname{Tor}(B, \F_2) \cong B$, and the Universal Coefficient Theorem gives us $$H_2(B; \F_2) \cong (\bigwedge^2 B) \oplus B.$$

\subsection{Abelian cycles} If $f,h \in G$ commute, there is a map $i: \Z^2 \rightarrow G$ sending the standard generators of $\Z^2$ to $f$ and $h$. This induces a map on homology $$i_*: H_2(\Z^2; \F_2) \rightarrow H_2(G; \F_2).$$ The image of the fundamental class $1$ of $H_2(\Z^2; \F_2) \cong \F_2$ is the \emph{abelian cycle} $$\{ f,h \} := i_*(1).$$ The upshot is that commuting elements of $G$ give us (possibly trivial) elements of $H_2(G; \F_2)$. To find elements in $\operatorname{im}(\sigma_*)$, we apply $\sigma_*$ to abelian cycles. By \cite[Lemma 2.1]{Sakasai1}, this calculation is $$\sigma_*(\{f,h \}) = (\sigma(f) \wedge \sigma(h), 0) \in H_2(B; \F_2).$$ The class $\sigma_*( \{f,h \})$ always vanishes in the Tor term of $H_2(B; \F_2)$. See \cite[Section 3.1]{BrendleFarb}. Because of this, we denote classes $(\sigma(f) \wedge \sigma(h), 0) \in H_2(B; \F_2)$ simply as $\sigma(f) \wedge \sigma(h) \in \bigwedge^2 B$.

\section{Cohomology classes detected by $B_2^{b_i}$} \label{section:HK}

In this section, we prove Theorem \ref{maintheorem:HK}, which we now recall.

\begin{theorem*}[Theorem \ref{maintheorem:HK}]
Let $G$ be either $\mathcal{HI}_g^1$ or $\mathcal{HK}_g^1$. For $g \geq 4$, the induced map
        $$\sigma^*: H^2(B_2^{b_i}; \F_2) \rightarrow H^2(G; \F_2)$$ 
    has dimension at least on the order of $g^4$. 
\end{theorem*}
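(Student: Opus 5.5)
We follow the abelian cycle method set up in Section \ref{section:homology}, imitating Brendle--Farb \cite[Theorem 1]{BrendleFarb}. Since $\mathcal{HK}_g^1 \subset \mathcal{HI}_g^1$ and $\sigma(\mathcal{HK}_g^1) = B_2^{b_i} \subset B_3^{b_i}$, any abelian cycle built from commuting elements of $\mathcal{HK}_g^1$ is also an abelian cycle in $\mathcal{HI}_g^1$. So we work with $G = \mathcal{HK}_g^1$ throughout and then push the cycles forward. For $G = \mathcal{HI}_g^1$, the relevant map is $\sigma^*: H^2(B_2^{b_i};\F_2) \to H^2(G;\F_2)$. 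We interpret this via the composite $G \to B_3^{b_i} \to B_2^{b_i}$, or more simply we take pairs whose images under $\sigma$ already lie in $B_2^{b_i}$, so that the computation is literally the same. The goal is to exhibit on the order of $g^4$ pairs of commuting elements $f,h$ whose images $\sigma(f)\wedge\sigma(h)$ are linearly independent in $\bigwedge^2 B_2^{b_i} \subset H_2(B_2^{b_i};\F_2)$. Dualizing, this gives a subspace of $\operatorname{im}(\sigma^*)$ of that dimension.

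The building blocks are separating disk twists, whose images are computed by Lemma \ref{lemma:septwist}. For a subsurface of genus one containing the handle $i$, cut off by a curve bounding a disk in $\mathcal{V}_g^1$, we get $\sigma(T_x) = \ai\bi$. More generally, a genus-one subsurface whose homology is spanned by $a_i + a_j$ and $b_i$ (or by $a_i$ and $b_i + b_j$) can be chosen disjoint from meridian disks. We can realize it by applying the handlebody elements realizing $X_{ij}$ or $Y_{ij}$ to the standard curve $x$ of Figure \ref{figure:curvesforBCJimage}. The images of these twists are exactly the elements computed in the proof of Lemma \ref{lemma:subset}, e.g.\ $\ai\bi + \ai\bj$ and $\ai\bi + \bi\bj + \bi$.

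Two such twists commute when their curves are disjoint, and disjoint curves can be arranged whenever the supporting handle index sets are disjoint. So for $\{i,j\}\cap\{k,\ell\} = \emptyset$ (this needs $g \ge 4$) we get commuting pairs $T_x, T_{x'}$ with, for instance,
$$\sigma_*(\{T_x,T_{x'}\}) = (\ai\bi+\ai\bj)\wedge(\ak\bk+\ak\bl).$$
Since $\ai\bi\wedge\ak\bk$ also arises this way, subtracting leaves elements of the form $\ai\bj\wedge\ak\bl$, $\ai\bj\wedge \ak\bk$, and so on. These are wedges of distinct basis vectors of $\mathcal{B}_2^{b_i}$, hence linearly independent in $\bigwedge^2 B_2^{b_i}$. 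There are roughly $g^4/2$ choices of ordered pairs $(i,j)$, $(k,\ell)$ with all four indices distinct, and counting unordered pairs still gives order $g^4$ independent classes.

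The main obstacle is geometric rather than algebraic. One must verify that the modified curves are genuinely disjoint separating curves bounding disks in $\mathcal{V}_g^1$. We would handle this by taking a genus-two subsurface on handles $i,j$ cut off by a separating disk curve, with a disjoint one on handles $k,\ell$, and performing all modifications inside these subsurfaces. The $X_{ij}$ and $Y_{ij}$ moves can be realized by handlebody elements supported in a neighborhood of handles $i$ and $j$, for instance handle slides and twists supported in the genus-two subhandlebody. The resulting twists are then supported in disjoint subsurfaces and commute, and Lemma \ref{lemma:septwist} computes their images directly from the homology of each subsurface. A final independence check compares the coefficients of basis wedges, as above.
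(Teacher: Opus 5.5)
Your argument is correct, and at its core it is the paper's method. You use abelian cycles of commuting separating disk twists, move them by handlebody elements realizing the generators $X_{ij}$ of $\llsp$, read off independence from distinct basis wedges in $\bigwedge^2 B_2^{b_i}$, and then dualize.

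The difference is in the bookkeeping. The paper starts from the single cycle $\{T_x,T_y\}$ with image $\bar a_1\bar b_1\wedge\bar a_2\bar b_2$. It uses that $\operatorname{im}(\sigma_*)$ is $\llsp$-invariant to produce all sixteen types of non-index-matched basis wedges. It then compares $\dim\bigwedge^2B_2^{b_i}\approx\frac98g^4$ with $\dim IM^{\leq 2}=2g^3-g$.

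You extract only the family $\ai\bj\wedge\ak\bl$ with $i,j,k,\ell$ distinct. Each such element is the sum over $\F_2$ of four cycles: pair $T_{x_i}$ or its $X_{ij}$-image with $T_{x_k}$ or its $X_{k\ell}$-image. Subtracting $\ai\bi\wedge\ak\bk$ alone does not isolate $\ai\bj\wedge\ak\bl$; the full four-term sum is what does it. This yields $\frac12 g(g-1)(g-2)(g-3)$ independent classes. That is a worse constant and gives no description of the full image, but it avoids the case analysis and suffices for the stated order of growth.

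The ``main obstacle'' you flag actually disappears. Conjugation by any element of $\mathcal{H}_g^1$ preserves commutation, separation and disk-boundingness, and $\phi T_x\phi^{-1}=T_{\phi(x)}$. So conjugating $\{T_{x_i},T_{x_k}\}$ by a handlebody element realizing $X_{ij}$, $X_{k\ell}$, or $X_{ij}X_{k\ell}$ gives the needed cycles with no local-support or disjointness argument. This is exactly the equivariance the paper relies on.

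One small slip: for $i\neq j$ one has $Y_{ij}(\ai\bi)=\ai\bi+\bi\bj$. The element $\ai\bi+\bi\bj+\bi$ appears in the paper as $Y_{jj}(\aj\bi)$ and is not $\sigma$ of a separating twist. Your argument never uses it, so nothing breaks.
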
 

Theorem \ref{maintheorem:HK} will follow from Theorem \ref{theorem:johnsonkernel} and Proposition \ref{prop:dimageHK}, which concern the image of the induced map on homology $$\sigma_*: H_2(\mathcal{HK}_g^1; \F_2) \rightarrow H_2(B_2^{b_i}; \F_2)$$ following the setup in Section \ref{section:homology}. In particular, we define a certain subspace of $\bigwedge^2 B_2^{b_i} \subset H_2(B_2^{b_i}; \F_2)$ and show it is contained in $\operatorname{im}(\sigma_*)$.

\subsection{Index-matched elements} \label{section:IM2} Recall from Section \ref{section:handlebodyBCJ} that a basis for $B_2^{b_i}$ is $$\mathcal{B}_2^{b_i} = \{ \ai \bi \; (1 \leq i \leq g), \; \ai \bj \; (1 \leq i \neq j \leq g), \; \bi \; (1 \leq i \leq g), \; \bi \bj \; (1 \leq i < j \leq g) \}.$$ Then a basis for $\bigwedge^2 B_2^{b_i}$ is made up of ${|\mathcal{B}_2^{b_i}| \choose 2}$ elements of the form $u \wedge v$, where $u,v \in \mathcal{B}_2^{b_i}$ are distinct. We say a basis element of $\bigwedge^2 B_2^{b_i}$ is \emph{index-matched} if it is of the form\footnote{We allow $\bar{y} = \bi$ to include elements of the form $\ai \bar{x} \wedge \bi$.} $\ai \bar{x} \wedge \bi \bar{y}$. We are now ready to state Theorem \ref{theorem:johnsonkernel}.

\begin{theorem} \label{theorem:johnsonkernel}
    For $g \geq 4$, the image of the map $\sigma_*: H_2(\mathcal{HK}_g^1; \F_2) \rightarrow H_2(B_2^{b_i}; \F_2)$ contains the subspace spanned by all basis elements of $\bigwedge^2 B_2^{b_i} \subset H_2(B_2^{b_i}; \F_2)$ which are not index-matched.
\end{theorem}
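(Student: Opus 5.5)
I will follow the abelian cycle method of Brendle--Farb. The plan is to exhibit, for each non-index-matched basis element $u \wedge v$ of $\bigwedge^2 B_2^{b_i}$, pairs of commuting elements $f,h \in \mathcal{HK}_g^1$ whose abelian cycles satisfy $\sigma_*(\{f,h\}) = \sigma(f)\wedge\sigma(h)$. These images should equal $u \wedge v$ up to a combination of elements already shown to lie in $\operatorname{im}(\sigma_*)$. The basic building blocks are separating disk twists $T_x$. By Lemma \ref{lemma:septwist}, $\sigma(T_x) = \sum \bar c_i \bar d_i$ over a symplectic basis of the subsurface $S_x$ cut off by $x$. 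Two disjoint separating disk twists commute, so $\{T_x, T_{x'}\}$ is an abelian cycle.

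\textbf{Step 1: a base case.} I would choose two disjoint separating curves bounding disks in $\mathcal{V}_g^1$, each cutting off a genus-one subsurface. These surround handles $1$ and $2$ respectively. This gives $\sigma(T_x)\wedge\sigma(T_{x'}) = \bar a_1\bar b_1 \wedge \bar a_2 \bar b_2$, which is not index-matched.

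\textbf{Step 2: use equivariance.} The image $\operatorname{im}(\sigma_*)\subset \bigwedge^2 B_2^{b_i}$ is closed under the diagonal action of $\llsp$, since conjugating $f,h$ by a handlebody element preserves commutation. I would therefore apply the generators $X_{ij}$ and $Y_{ij}$ from Proposition \ref{prop:llSpgenerators} to the base element and to further explicit abelian cycles, using the formulas computed in the proof of Lemma \ref{lemma:llspsubspace}. For example, $X_{31}$ applied to $\bar a_1\bar b_1\wedge \bar a_2\bar b_2$ yields $\bar a_1 \bar b_1 \wedge \bar a_2\bar b_2 + \bar a_1\bar b_3\wedge \bar a_2 \bar b_2$. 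The second summand is therefore in the image. Similarly, $Y$-moves produce terms of the form $\bar b_i\bar b_j$ and $\bar b_i$ in one or both factors.

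The strategy is to climb through the types of non-index-matched pairs, ordered by the types of the two factors, subtracting previously obtained elements at each stage. The types are
\[
(\ai\bj,\ \ak\bl),\quad (\ai\bj,\ \bk\bl),\quad (\ai\bj,\ \bk),\quad (\bi\bj,\ \bk\bl),\quad \dots
\]
When indices must overlap (e.g.\ $\ai\bj \wedge \ak\bi$), I would use curves whose subsurfaces are nested or share handles. One example is a genus-two separating disk curve around handles $i,k$ together with a genus-one curve inside it. Its image $(\ai\bi+\ak\bk)\wedge \ak\bk$ can be pushed around by $X$ and $Y$ moves. Bounding-pair-free arguments suffice because everything lives in $\mathcal{HK}_g^1$. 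The hypothesis $g\ge4$ provides enough room for spare handles to host auxiliary indices.

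\textbf{Main obstacle.} The hard part is understanding exactly why index-matched elements $\ai\bar x\wedge \bi\bar y$ cannot be reached, and making sure the moves never force us to use them. Each time $X_{ij}$ or $Y_{ij}$ is applied, cross terms appear. Some of these cross terms may be index-matched, such as $\ai\bi\wedge\bi\bj$ arising from $Y_{jj}$. This happens because the underlying pair of subsurfaces containing $a_i$ and $b_i$ in different components is impossible for disjoint curves.

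I would first organize the computation so that every move starts from a configuration where indices in the two factors come from disjoint handles or nested subsurfaces. For each non-index-matched type, I would then check case by case that the unwanted terms are either zero, already obtained, or cancel in pairs. If some case resists, I would fall back to constructing a direct abelian cycle realizing it geometrically. For instance, I would take a separating disk twist supported in a subsurface containing $a_i,b_j$ together with a commuting twist whose subsurface carries $\bk\bl$ after a handle slide.
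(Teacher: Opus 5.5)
Your strategy is the paper's. A single abelian cycle of two disjoint genus-one separating disk twists gives $\bar a_1\bar b_1\wedge\bar a_2\bar b_2$, and everything else is extracted from it using the $\llsp$-invariance of $\operatorname{im}(\sigma_*)$ and the generators $X_{ij},Y_{ij}$. But the proposal stops where the proof begins. The entire content of the theorem is an explicit ordering of the non-index-matched types (there are 16 up to permuting indices), together with, for each type, a move whose output is that type plus types already obtained. You defer this to a case-by-case check with a fallback to unspecified new abelian cycles, so nothing beyond the base case is actually established.

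Several signposts in the plan also point the wrong way. Your sample overlapping type $\ai\bj\wedge\ak\bi$ is index-matched ($\ai$ in one factor, $\bi$ in the other), so it is not a target. Nested curves give nothing new, since $(\ai\bi+\ak\bk)\wedge\ak\bk=\ai\bi\wedge\ak\bk$. The ``main obstacle'' is not part of the statement: the theorem only asserts containment, so you never have to explain why index-matched classes are unreachable. (Minor: in the paper's convention $X_{31}(\bar a_1\bar b_1)=\bar a_1\bar b_1+\bar a_3\bar b_1$; your term $\bar a_1\bar b_3$ comes from $X_{13}$.)

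What closes the gap is to choose every move so that it fixes one factor. Then $M(u\wedge v)=u\wedge v+u\wedge(Mv+v)$, and the new terms can be read off from the formulas in the proof of Lemma \ref{lemma:llspsubspace}. One checks from the generator formulas that these new terms are never index-matched. Your worry only materializes when both factors move: for example, $Y_{jj}(\aj\bk\wedge\aj\bl)$ contains $\aj\bk\wedge\bj\bl+\bj\bk\wedge\aj\bl$.

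With $i,j,k,\ell$ distinct, the moves run as follows. $X_{jk}$ on $\ai\bi\wedge\aj\bj$ gives $\ai\bi\wedge\aj\bk$. $Y_{j\ell}$ on $\ai\bi\wedge\al\bk$ gives $\ai\bi\wedge\bj\bk$. $X_{ij}$ applied to $\aj\bj\wedge\ai\bk$, $\ai\bi\wedge\ak\bj$, $\ai\bi\wedge\ak\bl$ and $\ai\bi\wedge\bk\bl$ gives $\ai\bj\wedge\ai\bk$, $\ai\bj\wedge\ak\bj$, $\ai\bj\wedge\ak\bl$ and $\ai\bj\wedge\bk\bl$. $Y_{j\ell}$ on $\ai\bj\wedge\al\bk$ gives $\ai\bj\wedge\bj\bk$.

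The linear types then come from $Y_{kk}(\ak)=\ak+\bk+1$. For example, $Y_{kk}(\ak\bi\wedge\aj\bi)$ yields $\bi\wedge\aj\bi$ modulo earlier types. Similar one-factor-fixing moves give $\bi\wedge\aj\bj$, $\bi\wedge\aj\bk$, $\bi\wedge\bi\bj$, $\bi\wedge\bj$, $\bi\wedge\bj\bk$ and $\bi\bj\wedge\bi\bk$. Finally, $Y_{ii}(\ai\bj\wedge\bk\bl)$ yields $\bi\bj\wedge\bk\bl$.

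No abelian cycle beyond the base one is needed. The hypothesis $g\ge 4$ enters only through types with four distinct indices, such as $\ai\bj\wedge\ak\bl$ and $\bi\bj\wedge\bk\bl$.
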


\begin{proof} Recall that $\llsp$ contains automorphisms that permute indices of generators of $H$. It then follows, for example, that $\bar{a}_1 \bar{b}_1 \wedge \bar{a}_2 \bar{b}_2 \in \operatorname{im}(\sigma_*)$ implies $\ai \bi \wedge \aj \bj \in \operatorname{im}(\sigma_*)$ for all choices of $1 \leq i,j \leq g$. To prove our theorem, it suffices to show that $\operatorname{im}(\sigma_*)$ contains basis elements of each of the following 16 types (where we assume indices are distinct):
\begin{multicols}{4}
 \begin{enumerate}
        \item $\ai \bi \wedge \aj \bj$
        \item $\ai \bi \wedge \aj \bk$
        \item $\ai \bi \wedge \bj \bk$
        \item $\ai \bj \wedge \ai \bk$
        \item $\ai \bj \wedge \ak \bj$
        \item $\ai \bj \wedge \ak \bl$
        \item $\ai \bj \wedge \bj \bk$
        \item $\ai \bj \wedge \bk \bl$
        \item $\bi \wedge \aj \bi$
        \item $\bi \wedge \aj \bj$
        \item $\bi \wedge \aj \bk$
        \item $\bi \wedge \bi \bj$
        \item $\bi \wedge \bj$
        \item $\bi \wedge \bj \bk$
        \item $\bi \bj \wedge \bi \bk$
        \item $\bi \bj \wedge \bk \bl$
    \end{enumerate}
    \end{multicols}

Consider the commuting separating disk twists $T_x$ and $T_y$ in Figure \ref{figure:SeparatingCurvesAbelianCycle}.

\begin{figure}[H]
    \centering
    \includegraphics[scale=.5]{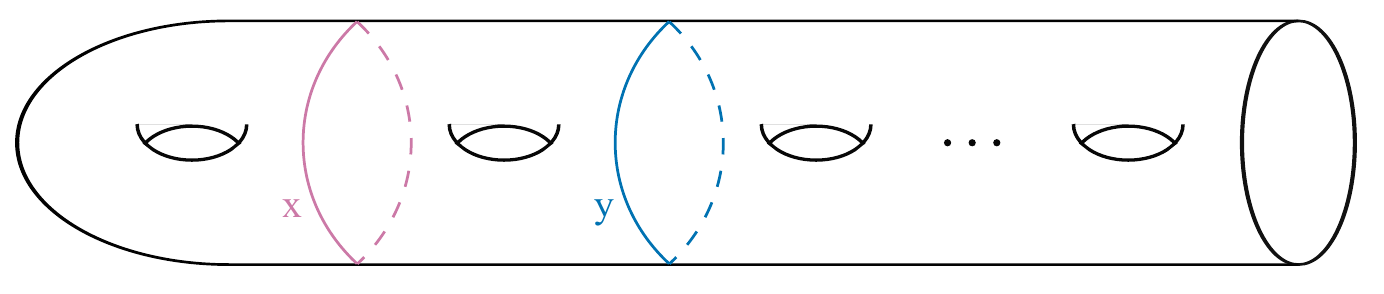}
    \caption{Commuting disk twists $T_x$ and $T_y$}
    \label{figure:SeparatingCurvesAbelianCycle}
\end{figure}

The element $$\sigma_*(\{T_x, T_y \}) = \bar{a}_1 \bar{b}_1 \wedge \bar{a}_2 \bar{b}_2$$ is Type 1. We now find the rest of the basis element types in $\operatorname{im}(\sigma_*)$, building on previous work as we go. We have \begin{gather*} 
X_{jk}(\underbrace{\ai \bi \wedge \aj \bj}_{\text{Type 1}}) = \ai \bi \wedge \aj \bj + \underbrace{\boxed{\ai \bi \wedge \aj \bk}}_{\text{Type 2}} \\
Y_{j \ell}(\underbrace{\ai \bi \wedge \al \bk}_{\text{Type 2}}) = \ai \bi \wedge \al \bk + \underbrace{\boxed{\ai \bi \wedge \bj \bk}}_{\text{Type 3}} \\
X_{ij}(\underbrace{\aj \bj \wedge \ai \bk}_{\text{Type 2}}) = \aj \bj \wedge \ai \bk + \underbrace{\boxed{\ai \bj \wedge \ai \bk}}_{\text{Type 4}} \\
X_{ij}(\underbrace{\ai \bi \wedge \ak \bj}_{\text{Type 2}}) = \ai \bi \wedge \ak \bj + \underbrace{\boxed{\ai \bj \wedge \ak \bj}}_{\text{Type 5}} \\
X_{ij}(\underbrace{\ai \bi \wedge \ak \bl}_{\text{Type 2}}) = \ai \bi \wedge \ak \bl + \underbrace{\boxed{\ai \bj \wedge \ak \bl}}_{\text{Type 6}} \\
Y_{j \ell}(\underbrace{\ai \bj \wedge \al \bk}_{\text{Type 6}}) = \ai \bj \wedge \al \bk + \underbrace{\boxed{\ai \bj \wedge \bj \bk}}_{\text{Type 7}} \\
X_{ij}(\underbrace{\ai \bi \wedge \bk \bl}_{\text{Type 3}}) = \ai \bi \wedge \bk \bl + \underbrace{\boxed{\ai \bj \wedge \bk \bl}}_{\text{Type 8}} \\
Y_{kk}(\underbrace{\ak \bi \wedge \aj \bi}_{\text{Type 5}}) = \ak \bi \wedge \aj \bi + \underbrace{\bk \bi \wedge \aj \bi}_{\text{Type 7}} + \underbrace{\boxed{\bi \wedge \aj \bi}}_{\text{Type 9}} \\
Y_{kk}(\underbrace{\ak \bi \wedge \aj \bj}_{\text{Type 2}}) = \ak \bi \wedge \aj \bj + \underbrace{\bk \bi \wedge \aj \bj}_{\text{Type 3}} + \underbrace{\boxed{\bi \wedge \aj \bj}}_{\text{Type 10}} \\
X_{jk}(\underbrace{\bi \wedge \aj \bj}_{\text{Type 10}}) = \bi \wedge \aj \bj + \underbrace{\boxed{\bi \wedge \aj \bk}}_{\text{Type 11}} \\
Y_{jj}(\underbrace{\bi \wedge \aj \bi}_{\text{Type 9}}) = \bi \wedge \aj \bi + \underbrace{\boxed{\bi \wedge \bi \bj}}_{\text{Type 12}} \\
Y_{jk}(\underbrace{\bi \wedge \ak \bj}_{\text{Type 11}}) = \bi \wedge \ak \bj + \underbrace{\boxed{\bi \wedge \bj}}_{\text{Type 13}} \\
Y_{jk}(\underbrace{\bi \wedge \ak \bk}_{\text{Type 10}}) = \bi \wedge \ak \bk + \underbrace{\boxed{\bi \wedge \bj \bk}}_{\text{Type 14}} \\
Y_{jj}(\underbrace{\aj \bi \wedge \bi \bk}_{\text{Type 7}}) = \aj \bi \wedge \bi \bk + \underbrace{\boxed{\bi \bj \wedge \bi \bk}}_{\text{Type 15}} + \underbrace{\bi \wedge \bi \bk}_{\text{Type 12}} \\
Y_{ii}(\underbrace{\ai \bj \wedge \bk \bl}_{\text{Type 8}}) = \ai \bj \wedge \bk \bl + \underbrace{\boxed{\bi \bj \wedge \bk \bl}}_{\text{Type 16}} + \underbrace{\bj \wedge \bk \bl}_{\text{Type 14}}. \qedhere
\end{gather*} \end{proof} 

\subsection{Dimension counting} \label{section:dimensionIM2} In the following proposition, we bound from below the dimension of $\operatorname{im}(\sigma_*)$. We denote by\footnote{The $``\leq"$ sign emphasizes that we include basis elements containing linear terms, such as $\bi \wedge \ai \bj$.} $IM^{\leq 2}$ the subspace of $\bigwedge^2 B_2^{b_i}$ generated by index-matched basis elements. 

\begin{proposition} \label{prop:dimageHK}
    For $g \geq 4$, the image of the map $\sigma_*: H_2(\mathcal{HK}_g^1; \F_2) \rightarrow H_2(B_2^{b_i}; \F_2)$ has dimension at least on the order of $g^4$.
\end{proposition}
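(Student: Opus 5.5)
By Theorem \ref{theorem:johnsonkernel}, $\operatorname{im}(\sigma_*)$ contains the span of all basis elements of $\bigwedge^2 B_2^{b_i}$ that are not index-matched. These basis vectors are linearly independent in $\bigwedge^2 B_2^{b_i} \subset H_2(B_2^{b_i};\F_2)$. So the plan is a pure counting argument:
$$\dim \operatorname{im}(\sigma_*) \;\geq\; \binom{|\mathcal{B}_2^{b_i}|}{2} - \dim IM^{\leq 2},$$
and it suffices to show the right side grows like $g^4$.

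First I count the total. By Proposition \ref{prop:imageBCJdimension}, $|\mathcal{B}_2^{b_i}| = \frac{3g^2+g}{2}$. Hence
$$\binom{|\mathcal{B}_2^{b_i}|}{2} = \tfrac12 \cdot \tfrac{3g^2+g}{2}\cdot\Big(\tfrac{3g^2+g}{2}-1\Big),$$
whose leading term is $\tfrac{9}{8}g^4$.

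Next I bound the index-matched elements from above. Such an element has the form $\ai \bar{x} \wedge \bi \bar{y}$. The index $i$ has $g$ choices. The first factor $\ai\bar{x}$ is a basis element containing $\ai$, so $\bar{x} = \bj$ for some $j$, giving at most $g$ choices. The second factor contains $\bi$: it is $\bi$, $\ak\bi$, or $\bi\bj$, giving at most $1+g+g = O(g)$ choices. So $\dim IM^{\leq 2} = O(g^3)$, and I would write it roughly as at most $g\cdot g\cdot(2g+1)$. Precision is not needed, since any $O(g^3)$ bound suffices.

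Subtracting, the non-index-matched span has dimension at least $\tfrac98 g^4 - O(g^3)$, which is on the order of $g^4$ for $g\ge 4$. To be safe for every $g \geq 4$ and not just asymptotically, I would check that the explicit difference is positive and quartic. With the crude bound this is
$$\tfrac{(3g^2+g)(3g^2+g-2)}{8} - g^2(2g+1),$$
which is positive for $g\ge 4$. There is no real obstacle. The only care needed is to avoid double-counting in the index-matched tally, for instance elements like $\ai\bi\wedge\aj\bj$, which are index-matched in two ways; but since only an upper bound is required, overcounting is harmless.
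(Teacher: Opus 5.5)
Your proposal is correct and is essentially the paper's argument: Theorem~\ref{theorem:johnsonkernel} combined with the comparison $\binom{(3g^2+g)/2}{2} = \frac{9}{8}g^4 + O(g^3)$ against an $O(g^3)$ count of index-matched basis elements (the paper tallies $2g^3-g$, while you use the cruder bound $g^2(2g+1)$ and also check positivity explicitly for $g \geq 4$). One harmless slip: $\ai\bi\wedge\aj\bj$ is not index-matched at all, since it is the Type 1 element of Theorem~\ref{theorem:johnsonkernel}; the elements that your tally, and the paper's, actually counts twice are those of the form $\ai\bj\wedge\aj\bi$.
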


\begin{proof} In Theorem \ref{theorem:johnsonkernel}, we showed that $\operatorname{im}(\sigma_*)$ contains all basis elements of $\bigwedge^2 B_2^{b_i}$ that are \emph{not} in $IM^{\leq 2}$. It suffices to show that $\operatorname{dim}(\bigwedge^2 B_2^{b_i})$ is on the order of $g^4$, while $\operatorname{dim}(IM^{\leq 2})$ is on the order of $g^3$. First, it follows from Proposition \ref{prop:imageBCJdimension} that the vector space $\bigwedge^2 B_2^{b_i}$ has dimension $${\frac{3g^2 + g}{2} \choose 2} = \frac{9}{8}g^4 + O(g^3).$$ 
Now, a basis for $IM^{\leq 2}$ consists of elements of the form $\ai \bar{x} \wedge \bi \bar{y}$, where there are: \begin{itemize}
    \item $g$ ways to choose $i$; and 
    \item $g$ ways\footnote{Our options are all $\bj$ for $1 \leq j \leq g$.} to choose $\bar{x}$; and 
    \item if $\bar{x}=\bi$, there are $2g-1$ ways\footnote{Since $\ai \bi \wedge \ai \bi = 0$.} to choose $\bar{y}$; and
    \item if $\bar{x} \neq \bi$, there are $2g$ ways to choose $\bar{y}$.
\end{itemize} It follows that $$\operatorname{dim}(IM^{\leq 2}) = g(2g-1) + g(g-1) \cdot 2g,$$ and so \begin{gather*}
    \operatorname{dim}(IM^{\leq 2}) = 2g^3 - g.\qedhere \end{gather*}  \end{proof} 

Theorem \ref{maintheorem:HK} follows from Theorem \ref{theorem:johnsonkernel} and Proposition \ref{prop:dimageHK}.

\section{Cohomology classes detected by $B_3^{b_i}$} \label{section:cupHI}

In this section, we prove Theorem \ref{maintheorem:HI}, which we now recall.

\begin{theorem*}[Theorem \ref{maintheorem:HI}]
    For $g \geq 4$, the image of $$\sigma^*: H^2(B_3^{b_i}; \F_2) \rightarrow H^2(\mathcal{HI}_g^1; \F_2)$$ has dimension at least on the order of $g^6$. 
\end{theorem*}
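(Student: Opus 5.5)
We follow the template of Section \ref{section:HK}, now working with $G = \mathcal{HI}_g^1$ and $B = B_3^{b_i}$. By the duality discussed in Section \ref{section:homology}, it suffices to show that $\operatorname{im}(\sigma_*) \subset \bigwedge^2 B_3^{b_i} \subset H_2(B_3^{b_i}; \F_2)$ has dimension on the order of $g^6$. By Proposition \ref{prop:imageBCJdimension}, $\dim \bigwedge^2 B_3^{b_i} = \binom{(7g^3+5g)/6}{2} = \tfrac{49}{72} g^6 + O(g^4)$. The plan is to show that $\operatorname{im}(\sigma_*)$ contains every wedge $u \wedge v$ of basis elements $u,v \in \mathcal{B}_3^{b_i}$ except those in an exceptional subspace. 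This exceptional subspace should be a degree-three analogue of $IM^{\leq 2}$: wedges whose supports share an index in an ``$a$ on one side, $b$ on the other'' pattern, so that they can never come from disjointly supported commuting elements. A count as in Proposition \ref{prop:dimageHK} should then show that this subspace has dimension $O(g^5)$, which gives the $g^6$ bound.

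\textbf{Step 1: seed abelian cycles.} We take commuting bounding pair annulus twists and separating disk twists with disjoint supports, for example a bounding pair $T_yT_{y'}^{-1}$ cutting off the first two handles and a second such pair supported on handles $3,4$. This requires $g \geq 4$. By Johnson's formula (Lemma 12b of \cite{BCJ}) and \cite[Lemma 2.1]{Sakasai1}, this cycle maps to
\[
(\bar a_1\bar b_1\bar a_2 + \bar a_1\bar b_1) \wedge (\bar a_3\bar b_3\bar a_4 + \bar a_3\bar b_3).
\]
Wedging one of these twists instead with a separating disk twist gives the mixed seeds. Theorem \ref{theorem:johnsonkernel} already supplies all non-index-matched wedges of two degree-$\leq 2$ elements, since $\mathcal{HK}_g^1 \subset \mathcal{HI}_g^1$. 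Subtracting those lower-degree pieces isolates pure seeds such as $\ai\aj\bi \wedge \ak\al\bk$ and $\ai\aj\bi \wedge \ak\bk$.

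\textbf{Step 2: propagate.} Using the $\llsp$-equivariance from Section \ref{section:llsp}, we apply the generators $X_{ij}$ and $Y_{ij}$ of Proposition \ref{prop:llSpgenerators} exactly as in Lemma \ref{lemma:subset} and Theorem \ref{theorem:johnsonkernel}. The same moves that turned $\ai\aj\bi$ into $\ai\aj\bk$, $\ai\bi\bj$, $\ai\bj\bk$, and $\bi\bj\bk$ in the proof of Lemma \ref{lemma:subset} are applied now to one wedge factor at an index not appearing in the other factor. Each move produces the old element plus the new type, plus lower-degree terms that are already known. We only need to reach the generic types, those with all six indices distinct across the two factors, because they alone number $\Theta(g^6)$. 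Since $g \geq 4$, we can take at most four handles in the seed and then relabel indices by handle permutations.

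\textbf{Main obstacle.} The hard part is bookkeeping. When an $X_{ij}$ or $Y_{ij}$ move is applied to one factor, an index coincidence can make the other factor change as well, producing extra terms. If fresh indices cannot be guaranteed, I would first show that the wedges with all indices distinct span a subspace of dimension $\sim c\,g^6$ inside $\operatorname{im}(\sigma_*)$. For that, it should suffice to take $u$ and $v$ supported on disjoint index sets $I, J$ and realize them by commuting elements of $\mathcal{HI}_g^1$ supported on disjoint subsurfaces, each subsurface containing the handles in $I$ or in $J$ respectively. Lemma \ref{lemma:subset}, applied on each subsurface, should realize any basis element there, modulo lower-degree terms. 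The count of such pairs of basis elements is already on the order of $g^6$, which proves the theorem without ever determining the exceptional subspace exactly.
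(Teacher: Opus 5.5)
Your proof is correct. The argument that actually closes it, the disjoint-support realization in your last paragraph, is a genuinely different and much shorter route than the paper's.

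\textbf{What the paper does.} It proves the stronger Theorem \ref{theorem:nonIM3}: every basis wedge outside $IM^3$ lies in $\operatorname{im}(\sigma_*)$. This includes wedges with overlapping indices and wedges with a factor of degree $\le 2$. Getting there takes the long case analysis of Propositions \ref{prop:aibi}--\ref{prop:bibjbk}, and the bound then comes from $\dim IM^3 = 4g^5+O(g^4)$.

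\textbf{What you do.} You observe that wedges of degree-$3$ basis monomials on disjoint three-element index sets already number $\tfrac12\cdot 7\binom{g}{3}\cdot 7\binom{g-3}{3}=\tfrac{49}{72}g^6+O(g^5)$. That is the entire leading term of $\dim\bigwedge^2 B_3^{b_i}$. Each such $u\wedge v$ is hit exactly, as follows.
\begin{itemize}
\item Move $I\cup J$ to $\{1,\dots,6\}$ by a handle permutation. This is harmless because $\operatorname{im}(\sigma_*)$ is $\llsp$-invariant.
\item Cut off two disjoint genus-$3$ sub-handlebodies along separating disks.
\item Theorem \ref{theoremA} at $g=3$ gives $f,h$ in their handlebody Torelli groups with $\sigma(f)=u$ and $\sigma(h)=v$. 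Then $\sigma_*\{f,h\}=u\wedge v$ on the nose.
\end{itemize}
So your ``modulo lower-degree terms'' hedge is unnecessary.

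One ingredient should be stated explicitly: $\sigma$ is natural under inclusion of a one-boundary subsurface, i.e.\ it is $\sigma_S$ followed by $\bar x\mapsto \bar x$. This holds because Johnson's formulas for separating twists and bounding pair maps depend only on the subsurface cut off, which is the same whether computed in $S$ or in $\Sigma_g^1$.

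\textbf{Trade-offs.} Your route needs $g\ge 6$, which is fine for an asymptotic statement. It says nothing about the non-generic wedges that the paper's Theorem \ref{theorem:nonIM3} pins down.

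\textbf{A gap in your Step 2.} Your propagation plan, as written, does not justify the claim that the lower-order terms are ``already known.'' For example, $Y_{jj}(\ai\aj\bk)=\ai\aj\bk+\ai\bj\bk+\ai\bk$ produces $\ai\bk\wedge v$ with $\deg v=3$. Theorem \ref{theorem:johnsonkernel} does not supply such terms; the paper needs Propositions \ref{prop:aibi}--\ref{prop:remaining2x2} exactly for them. With a fresh index $n$, you could instead use $Y_{jn}$ (giving $\ai\aj\bk+\ai\bn\bk$) and $Y_{in}$ (giving $\ai\bj\bk+\bn\bj\bk$), which create no lower-order terms. Your fallback makes Step 2 unnecessary in any case.
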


Theorem \ref{maintheorem:HI} will follow from Theorem \ref{theorem:nonIM3} and Proposition \ref{prop:dimageHI}. Following our strategy from Section \ref{section:HK}, we show that the image of the induced map on homology $$\sigma_*: H_2(\mathcal{HI}_g^1; \F_2) \rightarrow H_2(B_3^{b_i}; \F_2)$$ has the desired dimension. Again, we state our theorem in terms of a notion of ``index-matched" elements. 

\subsection{Index-matched elements} Recall from Section \ref{section:handlebodyBCJ} our basis $\mathcal{B}_3^{b_i}$ for $B_3^{b_i}$. Then a basis for $\bigwedge^2 B_3^{b_i}$ is made up of ${|\mathcal{B}_3^{b_i}| \choose 2}$ elements of the form $u \wedge v$, where $u,v \in \mathcal{B}_3^{b_i}$ are distinct. We denote by $IM^3$ the subspace of $\bigwedge^2 B_3^{b_i}$ generated by basis elements of the form $\ai \bar{x} \bar{y} \wedge \bi \bar{w} \bar{z}$ where $\ai \bar{x} \bar{y}$ and $\bi \bar{w} \bar{z}$ are degree-3 elements of $\mathcal{B}_3^{b_i}$. In contrast to the situation in Section \ref{section:dimensionIM2}, where $IM^{\leq 2}$ is spanned by wedge products of polynomials of varying degrees, we require the degrees of $\ai \bar{x} \bar{y}$ and $\bi \bar{w} \bar{z}$ to be exactly $3$. Thus, $$\ai \bj \bk \wedge \bi \bj \bk \in IM^3,$$ whereas $$\ai \bj \wedge \bi \bj \bk \notin IM^3.$$

\begin{theorem} \label{theorem:nonIM3}
    For $g \geq 4$, the image of the map $\sigma_*: H_2(\mathcal{HI}_g^1; \F_2) \rightarrow H_2(B_3^{b_i}; \F_2)$ contains the subspace spanned by all basis elements of $\bigwedge^2 B_3^{b_i} \subset H_2(B_3^{b_i}; \F_2)$ which are not in $IM^3$.
\end{theorem}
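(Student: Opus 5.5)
The proof will follow the template of Theorem \ref{theorem:johnsonkernel}. The set $\operatorname{im}(\sigma_*)$ is an $\llsp$-invariant subspace of $\bigwedge^2 B_3^{b_i}$, because $\sigma$ is $\mathcal{H}_g^1$-equivariant and conjugating an abelian cycle gives another abelian cycle. It also contains $\bigwedge^2 B_2^{b_i}$ minus the index-matched part: the image of $H_2(\mathcal{HK}_g^1;\F_2)\to H_2(\mathcal{HI}_g^1;\F_2)$ composed with $\sigma_*$ lands there, and Theorem \ref{theorem:johnsonkernel} applies. Since $\llsp$ contains the index permutations, it again suffices to exhibit one representative of each \emph{type} of non-$IM^3$ basis element $u\wedge v$, with types organised by degree and index pattern. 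I would split the types into three families:
\begin{enumerate}
\item both $u,v$ of degree $\leq 2$;
\item $u$ of degree $\leq 2$ and $v$ of degree $3$;
\item both of degree $3$ but not of the index-matched form $\ai\bar x\bar y\wedge \bi\bar w\bar z$.
\end{enumerate}

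For family (1), the non-index-matched elements come from Theorem \ref{theorem:johnsonkernel}. The index-matched degree-$\le 2$ elements, such as $\ai\bi\wedge \bi\bj$ or $\ai\bj\wedge\bi$, are not in $IM^3$, so they must also be produced. I would get them as by-products of applying $X_{ij},Y_{ij}$ to degree-3 elements, since lower-order terms such as $\ai\bi$ appear (e.g.\ $Y_{jj}(\ai\aj\bi)=\ai\aj\bi+\ai\bi\bj+\ai\bi$). For family (2), I would use commuting pairs made of a separating disk twist $T_x$ and a bounding pair annulus twist $T_yT_{y'}^{-1}$ supported on disjoint subsurfaces. By the Johnson formulas these give, for example,
$$\sigma_*(\{T_x, T_yT_{y'}^{-1}\})=\bar a_1\bar b_1\wedge(\bar a_2\bar b_2\bar a_3+\bar a_2\bar b_2),$$
and subtracting the known family-(1) term leaves $\bar a_1\bar b_1\wedge \bar a_2\bar a_3\bar b_2$. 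For family (3), I would use two disjoint commuting bounding-pair annulus twists, which needs $g\ge 4$ so that two such pairs with disjoint supports each containing a handle exist. This gives something like $(\bar a_1\bar b_1\bar a_2+\bar a_1\bar b_1)\wedge(\bar a_3\bar b_3\bar a_4+\bar a_3\bar b_3)$, which yields a seed $\bar a_1\bar a_2\bar b_1\wedge\bar a_3\bar a_4\bar b_3$ modulo families (1) and (2).

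From each seed I would propagate with the generators, exactly as in the proofs of Lemma \ref{lemma:subset} and Theorem \ref{theorem:johnsonkernel}. The maps $X_{ik}$ move a $b$-index or an $a$-index onto a new index. The maps $Y_{jj}$ and $Y_{jk}$ convert an $\bar a$ factor into a $\bar b$ factor, possibly with constant term, which produces lower-degree correction terms already known to lie in the image. The bookkeeping is a longer table of types than the 16 types before. I would organise it by the pair (number of $\bar a$'s, number of $\bar b$'s) in each factor and by the coincidence pattern of indices, deriving each new type from a previously obtained type plus already-known terms.

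The main obstacle is showing that the propagation reaches \emph{every} non-$IM^3$ type of family (3) while never being forced to use an element of $IM^3$. This matters because applying $Y$ to an $a$-index in one factor when the other factor carries the matching $b$-index creates exactly the index-matched pattern. For each target type I would first try to choose the seed so that the index being altered is not matched across the wedge. Where that fails, for example for elements like $\ai\aj\bk\wedge \bj\bk\bl$, where matching occurs in only one direction, I would check carefully that the matched configuration arising is of the forbidden form $\ai\cdot\wedge\bi\cdot$ with both factors of degree 3. If it is not, it is allowed, and the corresponding element is obtained by an additional commuting-pair construction with curves chosen to share the relevant index.
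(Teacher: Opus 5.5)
Your overall architecture matches the paper's: $\llsp$-invariance of $\operatorname{im}(\sigma_*)$, Theorem \ref{theorem:johnsonkernel} transported through $\mathcal{HK}_g^1\subset\mathcal{HI}_g^1$, abelian-cycle seeds, and propagation by $X_{ij},Y_{ij}$ with lower-order terms absorbed. Your family-(3) seed $\bar a_1\bar a_2\bar b_1\wedge\bar a_3\bar a_4\bar b_3$ is exactly the paper's.

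\textbf{Family (2) fails as you set it up.} You seed family (2) only with pairs $T_x$, $T_yT_{y'}^{-1}$ whose subsurfaces $S_x$, $S_{y,y'}$ are disjoint. Then every class entering $\sigma(T_x)$ is $\hat i$-orthogonal to every class entering $\sigma(T_yT_{y'}^{-1})$. Consider the composite
\[ B_2^{b_i}\wedge B_3^{b_i}\longrightarrow \bigl(B_2^{b_i}/\langle \bar b_1,\dots,\bar b_g\rangle\bigr)\otimes\bigl(B_3^{b_i}/B_2^{b_i}\bigr)\subset {\textstyle\bigwedge^2} H\otimes{\textstyle\bigwedge^3} H\longrightarrow H\otimes{\textstyle\bigwedge^2} H, \]
where the last arrow contracts one vector of the first factor against one of the second via $\hat i$. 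This map is $\llsp$-equivariant. It kills $\bigwedge^2 B_2^{b_i}$ and every such seed, and hence everything propagation inside $B_2^{b_i}\wedge B_3^{b_i}$ can produce from them. Yet it sends $\ai\bi\wedge\ai\aj\bi$ to $b_i\otimes(a_i\wedge a_j)+a_i\otimes(a_j\wedge b_i)\neq 0$.

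So the index-matched elements of family (2) are unreachable: $\ai\bi\wedge\ai\aj\bi$, $\ai\bi\wedge\ai\aj\bj$, $\ai\bi\wedge\aj\ak\bi$, $\ai\bi\wedge\bi\bj\bk$, and so on. These are not in $IM^3$, so the theorem requires them. Your by-product route to the index-matched part of family (1) also starts from such elements; in the paper it is $Y_{kk}$ applied to $\ai\bar x\wedge\ak\bi\bar y$. The missing input is nested pairs:
\begin{itemize}
\item $T_x$ with $S_x\subset S_{y,y'}$ gives $\bar a_1\bar b_1\wedge\bar a_1\bar b_1\bar a_2$ outright;
\item a disk twist $T_z$ with $S_{y,y'}\subset S_z$ gives $\bar a_2\bar b_2\wedge\bar a_1\bar b_1\bar a_2$ modulo Theorem \ref{theorem:johnsonkernel}.
\end{itemize}

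\textbf{Family (3) is left unproved, and the fallback aims at the wrong target.} Family (3) is the bulk of the theorem, and you leave it as unspecified bookkeeping. The element $\ai\aj\bk\wedge\bj\bk\bl$ pairs $\aj$ with $\bj$, so it lies in $IM^3$ and need not be produced. No additional commuting pairs are needed.

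The mechanism in the paper is as follows. Once all of $B_2^{b_i}\wedge B_3^{b_i}$ is known, every LOT is free. In each move that converts one degree-3 type of first factor into another, non-index-matching of $v$ with the target is exactly what forces the generator to fix $v$. Non-index-matching with the source is what makes the source available. For instance:
\begin{itemize}
\item $X_{ik}(\ai\aj\bi\wedge v)=\ai\aj\bi\wedge v+\ai\aj\bk\wedge v$;
\item $Y_{jj}(\ai\aj\bi\wedge v)=\ai\aj\bi\wedge v+\ai\bi\bj\wedge v+\ai\bi\wedge v$;
\item similarly $\ai\aj\bk\to\ai\bj\bk$ via $Y_{jj}$ or $Y_{kk}$.
\end{itemize}
All of this starts from the single seed, first moved within its own type by generators acting only on the second factor. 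Only a handful of residual configurations need one extra $X$ or $Y$ move, e.g.\ $\ai\aj\bk\wedge\ai\aj\bl$, $\ai\bi\bj\wedge\bj\bar x\bar y$, and $\ai\bj\bk\wedge\bj\bk\bar x$. Without this mechanism, your proposal does not yet show that every non-$IM^3$ type is reached.
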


Theorem \ref{theorem:nonIM3} will follow from Theorem \ref{theorem:johnsonkernel} together with a series of propositions. In Propositions \ref{prop:aibi}--\ref{prop:remaining2x2}, we show that every basis element $u \wedge v$ for which $u$ and $v$ do not both have degree 3 lies in $\operatorname{im}(\sigma_*)$. We then complete the proof in Propositions \ref{prop:aiajbi}--\ref{prop:bibjbk} by proving that $\operatorname{im}(\sigma_*)$ contains every basis element $u \wedge v \notin IM^3$ with $\operatorname{deg}(u) = \operatorname{deg}(v) = 3$. Unless otherwise stated, we assume that all indices and linear polynomials $\bar{x}, \bar{y}, \bar{z}$ are distinct. We begin with the following proposition.

\begin{proposition}[Type $\ai \bi \wedge \square \square \square$] \label{prop:aibi}
    When $g \geq 4$, all basis elements of the form $\ai \bi \wedge \square \square \square$ in $\bigwedge^2 B_3^{b_i}$ lie in $\operatorname{im}(\sigma_*)$.
\end{proposition}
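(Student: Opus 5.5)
We will realize one seed element of each degree-3 type paired with $\ai \bi$ as $\sigma_*$ of an abelian cycle. Then we will propagate to all other types using the $\llsp$-action, exactly as in the proofs of Lemma \ref{lemma:subset} and Theorem \ref{theorem:johnsonkernel}. The degree-3 basis elements of $B_3^{b_i}$ come in five types: $\aj \ak \bj$, $\aj \ak \bl$, $\aj \bj \bk$, $\aj \bk \bl$ and $\bj \bk \bl$. Each of these may share zero or more indices with $i$. For instance, $\ai \bi \wedge \ai \aj \bi$ and $\ai \bi \wedge \ai \aj \bk$ are allowed, as is the fully disjoint case $\ai \bi \wedge \aj \ak \bj$. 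We therefore first enumerate the finitely many index patterns, up to permutation of indices (permutations lie in $\llsp$). This is where $g \geq 4$ enters: a disjoint pattern such as $\ai \bi \wedge \aj \bk \bl$ needs four indices.

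\textbf{Seed.} Take the separating disk twist $T_x$ with $\sigma(T_x) = \bar{a}_1 \bar{b}_1$ from Figure \ref{figure:curvesforBCJimage}. Take a bounding pair annulus twist $T_y T_{y'}^{-1}$ supported on handles $2,3$, disjoint from $x$, so that by \cite[Lemma 12b]{BCJ} it has $\sigma = \bar{a}_2 \bar{b}_2 \bar{a}_3 + \bar{a}_2 \bar{b}_2$. These elements commute, so
$$\sigma_*(\{T_x, T_y T_{y'}^{-1}\}) = \bar{a}_1 \bar{b}_1 \wedge \bar{a}_2 \bar{b}_2 \bar{a}_3 + \bar{a}_1 \bar{b}_1 \wedge \bar{a}_2 \bar{b}_2 .$$
The second term is Type 1 and already lies in $\operatorname{im}(\sigma_*)$. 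Theorem \ref{theorem:johnsonkernel} applies because $\mathcal{HK}_g^1 \subset \mathcal{HI}_g^1$, so its image lies in the image for $\mathcal{HI}_g^1$, and it lands in $\bigwedge^2 B_2^{b_i} \subset \bigwedge^2 B_3^{b_i}$. Subtracting gives the disjoint seed $\ai \bi \wedge \aj \ak \bj$. For degree-2 terms we freely use Theorem \ref{theorem:johnsonkernel}, and also the degree-2-versus-degree-2 elements of $IM^{\leq 2}$ involving $\ai\bi$. Those IM elements may need a separate check, which we would obtain by the same seeding trick with a different pair of curves.

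\textbf{Propagation.} Apply $X$ and $Y$ moves as in the proof of Lemma \ref{lemma:subset}, but only on indices $\neq i$, so that $\ai \bi$ is fixed. We have $X_{jl}(\ai\bi \wedge \aj\ak\bj) = \ai\bi\wedge\aj\ak\bj + \ai\bi\wedge \aj\ak\bl$. Next, $Y_{kk}$ produces $\ai\bi \wedge \aj\bj\bk$ plus a degree-2 correction. Further $X$ and $Y$ moves give $\aj\bk\bl$ and $\bj\bk\bl$. At each step, any extra terms are either previously obtained types or degree-2 wedges already known to lie in the image.

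\textbf{Shared indices (main obstacle).} For second factors involving the index $i$, such as $\ai\aj\bi$, $\ai\aj\bk$, $\aj\ak\bi$, $\ai\bi\bj$, $\ai\bj\bk$, $\aj\bi\bk$ and $\bi\bj\bk$, moves touching index $i$ also alter $\ai\bi$. Consider $X_{ji}$ applied to a disjoint element: it yields $\ai\bi \wedge (\text{new term with } a_i \text{ or } b_i)$ plus the original element. Since $X_{ji}$ fixes $\ai\bi$ when $i$ appears only in the target slot, this is the route I would try first. In other cases I would use $Y_{ji}$, which sends $\aj \mapsto \aj+\bi$ and fixes $\ai\bi$ up to adding $\bj\bi$ terms. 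Any cross terms with $\ai\bi$ replaced by $\ai\bj$ or $\bi\bj$ must then be handled. Such cross terms pair a degree-3 element with a non-$\ai\bi$ degree-2 element, and they would be covered by the later Propositions; to avoid circularity I would instead choose moves that fix $\ai\bi$ exactly. The hardest cases are $\ai\bi \wedge \ai\aj\bi$ and $\ai\bi\wedge\ai\bi\bj$. For these I would try a direct abelian cycle, pairing $T_x$ with a bounding pair on the genus-2 subsurface containing handles $1,2$, chosen so that the two twists commute. If that fails, I would combine $Y_{ii}$, which fixes $\ai\bi$, with known elements.
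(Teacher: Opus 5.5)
There is a genuine gap. Your treatment of second factors that avoid index $i$ is fine and agrees with the paper. The cycle $\{T_x,T_yT_{y'}^{-1}\}$, with the bounding pair on other handles, gives $\ai\bi\wedge\aj\ak\bj$ modulo a non-index-matched term from Theorem~\ref{theorem:johnsonkernel}, and $X_{j\ell}$, $Y_{kk}$, $Y_{jk}$ then give the other disjoint types.

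The problem is the nine types whose second factor involves $\ai$ or $\bi$. You leave these as a plan, and the plan rests on a false claim. No $X$-move touching index $i$ fixes $\ai\bi$: $X_{ji}$ sends $a_i\mapsto a_i+a_j$, so $X_{ji}(\ai\bi)=\ai\bi+\aj\bi$, and likewise $X_{ij}(\ai\bi)=\ai\bi+\ai\bj$.

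Your fallback, using only moves that fix $\ai\bi$ exactly, cannot work either. If $M\in\llsp$ fixes $\ai\bi$, then $M(b_i)=b_i$ and $M(a_i)\in\{a_i,a_i+b_i\}$. Symplecticity then forces $M$ to preserve $\langle a_j,b_j : j\neq i\rangle$. So such moves never introduce $\ai$ or $\bi$ into a second factor that lacks them, and you need more abelian cycles. The paper uses two:
\begin{itemize}
\item $T_x$ together with the bounding pair that cuts off handle $1$ with $[y]=a_2$. This gives exactly $\bar{a}_1\bar{b}_1\wedge\bar{a}_1\bar{b}_1\bar{a}_2$; it is the cycle you gesture at but never compute.
\item $T_z$, with $z$ cutting off handles $1,2$, paired with the same bounding pair. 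This gives $\bar{a}_2\bar{b}_2\wedge\bar{a}_1\bar{b}_1\bar{a}_2$ modulo the first cycle and a Theorem~\ref{theorem:johnsonkernel} term.
\end{itemize}
From $\ai\bi\wedge\ai\aj\bi$ and $\ai\bi\wedge\ai\aj\bj$, moves fixing $\ai\bi$ do produce the second factors $\ai\bi\bj$, $\ai\aj\bk$, $\ai\bj\bk$, $\aj\bi\bj$, $\aj\bi\bk$ and $\bi\bj\bk$. Here $Y_{ii}$ supplies the $\bi$, and its degree-$2$ corrections are non-index-matched. In particular, $\ai\bi\wedge\ai\bi\bj$, which you single out as hard, is just $Y_{jk}$ applied to $\ai\bi\wedge\ai\ak\bi$.

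The last type, $\ai\bi\wedge\aj\ak\bi$, is out of reach of $\ai\bi$-fixing moves altogether. Write the second factor as $p_0+\ai p_1+\bi p_2+\ai\bi p_3$ with the $p_r$ free of index $i$. Such moves act on each $p_r$ through the other indices, except that $Y_{ii}$ adds $p_1$ to $p_2$ (and to $p_0$). For elements of $B_3^{b_i}$, every $p_1$ is a combination of monomials containing some $\bar{b}$, and the moves preserve this property. So, starting from your seeds, $p_2$ can never acquire $\aj\ak$. You must therefore use a move that changes $\ai\bi$ and control the cross terms, which is exactly what you hoped to avoid.

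The paper does this without circularity:
\begin{itemize}
\item $X_{ki}(\ai\bi\wedge\aj\ak\bk)$ is the original element plus $\ai\bi\wedge\aj\ak\bi$ plus the cross terms $\ak\bi\wedge\aj\ak\bk+\ak\bi\wedge\aj\ak\bi$.
\item $X_{ki}(\ak\bk\wedge\aj\ak\bk)$ produces the same two cross terms plus $\ak\bk\wedge\aj\ak\bk$ and $\ak\bk\wedge\aj\ak\bi$. These are shared-index types already obtained from the seeds above.
\end{itemize}
Hence the cross-term sum lies in the image, and so does $\ai\bi\wedge\aj\ak\bi$.

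Finally, do not invoke index-matched elements of $\bigwedge^2 B_2^{b_i}$ here. The paper obtains them only in Proposition~\ref{prop:remaining2x2}, via Corollary~\ref{corollary:2wedge3}, which depends on this proposition. In any case, every degree-$2$ correction that arises in the correct argument is non-index-matched.
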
 

\begin{proof} Up to reindexing, the following are the 14 basis elements of the form $\ai \bi \wedge \square \square \square$ in $\bigwedge^2 B_3^{b_i}$: \begin{multicols}{4}
\begin{enumerate}
    \item $\ai \bi \wedge \ai \aj \bi$
    \item $\ai \bi \wedge \ai \aj \bj$
    \item $\ai \bi \wedge \ai \aj \bk$
    \item $\ai \bi \wedge \ai \bi \bj$
    \item $\ai \bi \wedge \ai \bj \bk$
    \item $\ai \bi \wedge \aj \ak \bi$
    \item $\ai \bi \wedge \aj \ak \bj$
    \item $\ai \bi \wedge \aj \ak \bl$
    \item $\ai \bi \wedge \aj \bi \bj$
    \item $\ai \bi \wedge \aj \bi \bk$
    \item $\ai \bi \wedge \aj \bj \bk$
    \item $\ai \bi \wedge \aj \bk \bl$
    \item $\ai \bi \wedge \bi \bj \bk$
    \item $\ai \bi \wedge \bj \bk \bl$
\end{enumerate}
\end{multicols}

We will show that each of these basis element types lies in $\operatorname{im}(\sigma_*)$. First, we explicitly map to the Type 1, 2, and 7 elements with abelian cycles. Consider the commuting disk twists $T_x$ and $T_z$, and the commuting bounding pair annulus twists $T_y T_{y'}^{-1}$ and $T_w T_{w'}^{-1}$ in Figure \ref{figure:aibiCurves}.

\begin{figure}[htbp]
    \centering
    \includegraphics[scale=.4]{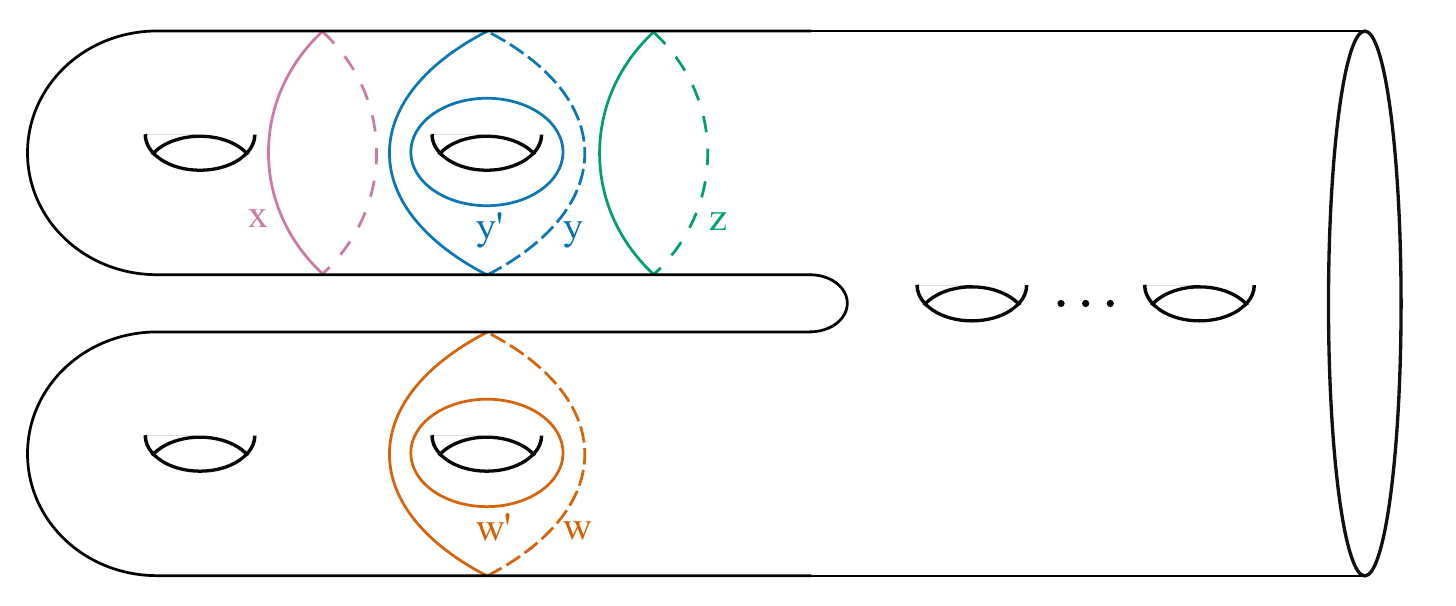}
    \caption{Disk twists and bounding pair annulus twists}
    \label{figure:aibiCurves}
\end{figure}

The element $$\sigma_*(\{ T_x, T_y T_{y'}^{-1} \}) = \bar{a}_1 \bar{b}_1 \wedge \bar{a}_1 \bar{b}_1 \bar{a}_2$$ is Type 1. We also have \begin{gather*} \sigma_*(\{ T_z, T_y T_{y'}^{-1} \}) = \underbrace{\bar{a}_1 \bar{b}_1 \wedge \bar{a}_1 \bar{b}_1 \bar{a}_2}_{\text{Type 1}} + \; \bar{a}_2 \bar{b}_2 \wedge \bar{a}_1 \bar{b}_1 \bar{a}_2 + \underbrace{\bar{a}_2 \bar{b}_2 \wedge \bar{a}_1 \bar{b}_1}_{\text{Theorem \ref{theorem:johnsonkernel}}}. \end{gather*} In particular, $\bar{a}_2 \bar{b}_2 \wedge \bar{a}_1 \bar{b}_1 \bar{a}_2 \in \operatorname{im}(\sigma_*)$ is Type 2. Finally, we have $$\sigma_*(\{ T_x, T_w T_{w'}^{-1} \}) = \bar{a}_1 \bar{b}_1 \wedge \bar{a}_3 \bar{b}_3 \bar{a}_4 + \underbrace{\bar{a}_1 \bar{b}_1 \wedge \bar{a}_3 \bar{b}_3}_{\text{Theorem \ref{theorem:johnsonkernel}}},$$ where $\bar{a}_1 \bar{b}_1 \wedge \bar{a}_3 \bar{b}_3 \bar{a}_4 \in \operatorname{im}(\sigma_*)$ is Type 7. We now show that the remaining element types lie in $\operatorname{im}(\sigma_*)$. We have 
$$X_{jk}(\underbrace{\ai \bi \wedge \ai \aj \bj}_{\text{Type 2}}) = \ai \bi \wedge \ai \aj \bj + \underbrace{\boxed{\ai \bi \wedge \ai \aj \bk}}_{\text{Type 3}}.$$ We also have $$X_{ki}(\underbrace{\ai \bi \wedge \aj \ak \bk}_{\text{Type 7}}) = \ai \bi \wedge \aj \ak \bk + \underbrace{\boxed{\ai \bi \wedge \aj \ak \bi}}_{\text{Type 6}} + \; \green{\ak \bi \wedge \aj \ak \bk + \ak \bi \wedge \aj \ak \bi},$$ where $$X_{ki}(\underbrace{\ak \bk \wedge \aj \ak \bk}_{\text{Type 1}}) = \ak \bk \wedge \aj \ak \bk + \green{\ak \bi \wedge \aj \ak \bk + \ak \bi \wedge \aj \ak \bi} + \underbrace{\ak \bk \wedge \aj \ak \bi}_{\text{Type 3}}.$$ Since $\green{\ak \bi \wedge \aj \ak \bk + \ak \bi \wedge \aj \ak \bi} \in \operatorname{im}(\sigma_*)$, all Type 6 vectors also lie in $\operatorname{im}(\sigma_*)$. We have \begin{gather*}
    Y_{jk}(\underbrace{\ai \bi \wedge \ai \ak \bi}_{\text{Type 1}}) = \ai \bi \wedge \ai \ak \bi + \underbrace{\boxed{\ai \bi \wedge \ai \bi \bj}}_{\text{Type 4}} \\
    Y_{j \ell}(\underbrace{\ai \bi \wedge \ai \al \bk}_{\text{Type 3}}) = \ai \bi \wedge \ai \al \bk + \underbrace{\boxed{\ai \bi \wedge \ai \bj \bk}}_{\text{Type 5}} \\
    X_{j \ell}(\underbrace{\ai \bi \wedge \aj \ak \bj}_{\text{Type 7}}) = \ai \bi \wedge \aj \ak \bj + \underbrace{\boxed{\ai \bi \wedge \aj \ak \bl}}_{\text{Type 8}} \\
    Y_{ii}(\underbrace{\ai \bi \wedge \ai \aj \bj}_{\text{Type 2}}) = \ai \bi \wedge \ai \aj \bj + \underbrace{\boxed{\ai \bi \wedge \aj \bi \bj}}_{\text{Type 9}} + \underbrace{\ai \bi \wedge \aj \bj}_{\text{Theorem \ref{theorem:johnsonkernel}}} \\
    Y_{k \ell}(\underbrace{\ai \bi \wedge \aj \al \bi}_{\text{Type 6}}) = \ai \bi \wedge \aj \al \bi + \underbrace{\boxed{\ai \bi \wedge \aj \bi \bk}}_{\text{Type 10}} \\
    Y_{k \ell}(\underbrace{\ai \bi \wedge \aj \al \bj}_{\text{Type 7}}) = \ai \bi \wedge \aj \al \bj + \underbrace{\boxed{\ai \bi \wedge \aj \bj \bk}}_{\text{Type 11}} \\
    Y_{kk}(\underbrace{\ai \bi \wedge \aj \ak \bl}_{\text{Type 8}}) = \ai \bi \wedge \aj \ak \bl + \underbrace{\boxed{\ai \bi \wedge \aj \bk \bl}}_{\text{Type 12}} + \underbrace{\ai \bi \wedge \aj \bl}_{\text{Theorem \ref{theorem:johnsonkernel}}} \\
    Y_{k \ell}(\underbrace{\ai \bi \wedge \al \bi \bj}_{\text{Type 10}}) = \ai \bi \wedge \al \bi \bj + \underbrace{\boxed{\ai \bi \wedge \bi \bj \bk}}_{\text{Type 13}} \\
    Y_{jk}(\underbrace{\ai \bi \wedge \ak \bk \bl}_{\text{Type 11}}) = \ai \bi \wedge \ak \bk \bl + \underbrace{\boxed{\ai \bi \wedge \bj \bk \bl}}_{\text{Type 14}}. \qedhere
\end{gather*} \end{proof}

We have the following proposition.

\begin{proposition}[Type $\ai \bj \wedge \square \square \square$] \label{prop:aibj}
    When $g \geq 4$, all basis elements of the form $\ai \bj \wedge \square \square \square$ in $\bigwedge^2 B_3^{b_i}$ lie in $\operatorname{im}(\sigma_*)$.
\end{proposition}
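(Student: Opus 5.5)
The plan is to bootstrap from Proposition \ref{prop:aibi}, applying the generators $X_{ij}$ to the elements it already places in $\operatorname{im}(\sigma_*)$, exactly as in the proof of Theorem \ref{theorem:johnsonkernel}. The key observation is that $X_{ij}$ sends $\ai\bi \mapsto \ai\bi + \ai\bj$. So for a degree-3 element $w$ of $\mathcal{B}_3^{b_i}$, we have
$$X_{ij}(\ai\bi \wedge w) = \ai\bi\wedge w + \ai\bi \wedge X_{ij}(w)' + \ai\bj \wedge X_{ij}(w),$$
where $X_{ij}(w)'$ denotes $X_{ij}(w)-w$. The left side lies in $\operatorname{im}(\sigma_*)$ because $\operatorname{im}(\sigma_*)$ is $\llsp$-invariant, since $\sigma$ is $\mathcal{H}_g^1$-equivariant. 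The terms $\ai\bi\wedge(\cdot)$ lie in $\operatorname{im}(\sigma_*)$ by Proposition \ref{prop:aibi}. Any degree-$\le 2$ terms are handled by Theorem \ref{theorem:johnsonkernel} or by Proposition \ref{prop:aibi}. What remains is $\ai\bj\wedge X_{ij}(w)$, which therefore lies in $\operatorname{im}(\sigma_*)$.

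First I will list, up to reindexing, all basis types $\ai\bj\wedge\square\square\square$. Following the convention of Proposition \ref{prop:aibi}, these are indexed by how the indices of the degree-3 monomial overlap with $\{i,j\}$ and with additional indices $k,\ell,m$. There are roughly two to three dozen types, and $g\ge 4$ (or $g \ge 5$ when five indices are needed, though I expect a reindexing with repeated indices to suffice) leaves room for all of them.

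For each type I choose $w$ so that the computation isolates the target. The cleanest choice is a $w$ that avoids $b_i$ and $a_j$, since then $X_{ij}(w)=w$. This gives
$$X_{ij}(\ai\bi\wedge w)=\ai\bi\wedge w+\ai\bj\wedge w,$$
so $\ai\bj\wedge w\in\operatorname{im}(\sigma_*)$ immediately. That settles every type in which the cubic monomial contains neither $\bi$ nor $\aj$.

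For the remaining types, the cubic contains $\bi$ or $\aj$. There I instead use the other generators, as in the second half of the proof of Proposition \ref{prop:aibi}. I apply $Y_{k\ell}$ or $Y_{kk}$ to an already-established $\ai\bj\wedge(\cdot)$ element whose cubic has an $\ak$; this trades $\ak$ for $\bl$, $\bk$, or $\bk+1$. I can also apply $X_{k\ell}$ with $k,\ell\ne i,j$, which fixes $\ai\bj$. The resulting correction terms are either previously established types or degree-$\le2$ wedges covered by Theorem \ref{theorem:johnsonkernel}. The types involving $\aj$ can also be obtained from $X_{ij}$ applied to $\ai\bi\wedge w$ with $w$ containing $\aj$, since $X_{ij}$ sends $\aj\mapsto\aj+\ai$. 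The extra terms then have the form $\ai\bi\wedge(\cdot)$, which are already known.

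The main obstacle is the types where the cubic contains $\bi$, such as $\ai\bj\wedge\ai\aj\bi$, $\ai\bj\wedge\aj\bi\bk$ and $\ai\bj\wedge\bi\bj\bk$. In these cases $X_{ij}$ simultaneously modifies both factors, and I need to check that the correction terms do not reintroduce the unknown type with coefficient cancelling. If a direct computation produces a sum of two unknowns, I will use the device already used for Type 6 in Proposition \ref{prop:aibi}. That is, I find a second known element, for example $X_{ij}$ applied to a Type 1 element $\ai\bi\wedge\ai\aj\bi$ of Proposition \ref{prop:aibi}, whose image contains the same combination, and then subtract. Failing that, I will construct an explicit abelian cycle from a separating disk twist on a genus-one piece commuting with a bounding pair annulus twist, as in Figure \ref{figure:aibiCurves}, with the homology bases transported appropriately.
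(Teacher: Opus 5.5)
Your Type 0 step is correct (it is the paper's first step), and your identity for $X_{ij}(\ai\bi\wedge w)$ is right. There is a gap, though: the proposal stops where the real work begins. The $Y_{k\ell}$ ($k,\ell\neq i$) and $X_{k\ell}$ ($k,\ell\notin\{i,j\}$) moves you list cannot reach the missing types. The former fix every $\bar b$ and only add $\bar b$'s to $\bar a$'s. The latter neither create nor alter $\bi$ or $\aj$. So when applied to known $\ai\bj\wedge(\cdot)$ elements, neither ever leaves the class of cubics avoiding $\bi$ and $\aj$. For cubics containing $\bi$, which you correctly flag as the crux, you offer only a conditional plan: subtract a second relation if one turns up, otherwise build an abelian cycle. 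Those types are therefore not proved. The paper treats its remaining 11 types by explicit chains that also use $X_{ik}$ and $X_{kj}$, which fix $\ai\bj$ while moving $\bi$ and $\aj$. For example, $X_{kj}(\ai\bj\wedge\ai\aj\bk)$ isolates $\ai\bj\wedge\ai\aj\bj$ modulo Type 0 terms.

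The missing idea is already inside your identity: do not insist that $w$ be a basis monomial. $X_{ij}$ is an involution over $\F_2$, and $B_3^{b_i}$ is $\llsp$-stable (Lemma \ref{lemma:llspsubspace}). So for any cubic $v\in\mathcal{B}_3^{b_i}$, put $w:=X_{ij}(v)\in B_3^{b_i}$. Then
\[ \ai\bj\wedge v \;=\; X_{ij}(\ai\bi\wedge w)+\ai\bi\wedge v , \]
and no elimination is needed. What must be checked is that $\ai\bi\wedge w\in\operatorname{im}(\sigma_*)$. Here your claim that degree-$\leq 2$ leftovers are covered by Theorem \ref{theorem:johnsonkernel} is false. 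For instance, $X_{ij}(\ai\aj\bk)=\ai\aj\bk+\ai\bk$ because $\ai^2=\ai$, and this produces the index-matched term $\ai\bi\wedge\ai\bk$. That term is excluded from Theorem \ref{theorem:johnsonkernel}, and the paper obtains it only in Proposition \ref{prop:remaining2x2}, which depends on the present proposition. It is nevertheless available now: for $k\notin\{i,\ell\}$,
\[ Y_{kk}(\ai\bi\wedge\ai\ak\bl)=\ai\bi\wedge\ai\ak\bl+\ai\bi\wedge\ai\bk\bl+\ai\bi\wedge\ai\bl , \]
and both cubic terms come from Proposition \ref{prop:aibi}. Moreover, the only quadratic terms $X_{ij}$ creates from a cubic are $\ai\bar{x}$ (when $v$ contains $\ai$ and $\aj$) and $\bar{x}\bj$ (when $v$ contains $\bi$ and $\bj$). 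So every $\ai\bi\wedge q$ that arises is zero, non-index-matched, or of the form just handled. Incidentally, the paper's second ``Type 0'' case does not work as written, since $X_{ji}(\aj\bj)=\aj\bj+\aj\bi$. Its computation of $X_{ij}(\ai\bi\wedge\ai\aj\bk)$ also drops the terms $\ai\bi\wedge\ai\bk+\ai\bj\wedge\ai\bk$. The argument above avoids both issues.
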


\begin{proof}
    First, consider elements of the form $\ai \bj \wedge \bar{x} \bar{y} \bar{z}$ for which $\bar{x}, \bar{y}, \bar{z} \notin \{ \bi, \aj \}$. Then, by Proposition \ref{prop:aibi} we have $\ai \bi \wedge \bar{x} \bar{y} \bar{z} \in \operatorname{im}(\sigma_*)$, and $$X_{ij}(\ai \bi \wedge \bar{x} \bar{y} \bar{z}) = \ai \bi \wedge \bar{x} \bar{y} \bar{z} + \boxed{\ai \bj \wedge \bar{x} \bar{y} \bar{z}}.$$ Similarly, if $\bar{x}, \bar{y}, \bar{z} \notin \{ \ai, \bj \}$, we have $$X_{ji}(\aj \bj \wedge \bar{x} \bar{y} \bar{z}) = \aj \bj \wedge \bar{x} \bar{y} \bar{z} + \boxed{\ai \bj \wedge \bar{x} \bar{y} \bar{z}}.$$ In either of these cases, we say $\ai \bj \wedge \bar{x} \bar{y} \bar{z}$ is Type 0. It remains to show $\ai \bj \wedge \bar{x} \bar{y} \bar{z} \in \operatorname{im}(\sigma_*)$ when the set $\{ \bar{x}, \bar{y}, \bar{z} \}$ nontrivially intersects both $\{\bi, \aj \}$ and $\{ \ai, \bj \}$. Up to reindexing, the following are the 11 basis elements of this form:
    \begin{multicols}{4}
    \begin{enumerate}
        \item $\ai \bj \wedge \ai \aj \bi$
        \item $\ai \bj \wedge \ai \aj \bj$
        \item $\ai \bj \wedge \ai \aj \bk$
        \item $\ai \bj \wedge \ai \ak \bi$
        \item $\ai \bj \wedge \ai \bi \bj$
        \item $\ai \bj \wedge \ai \bi \bk$
        \item $\ai \bj \wedge \aj \ak \bj$
        \item $\ai \bj \wedge \aj \bi \bj$
        \item $\ai \bj \wedge \aj \bj \bk$
        \item $\ai \bj \wedge \ak \bi \bj$
        \item $\ai \bj \wedge \bi \bj \bk$
    \end{enumerate}
    \end{multicols}

We show that basis elements of each of these types lie in $\operatorname{im}(\sigma_*)$. \begin{gather*}
    X_{ij}(\underbrace{\ai \bi \wedge \ai \aj \bk}_{\text{Prop \ref{prop:aibi}}}) = \ai \bi \wedge \ai \aj \bk + \underbrace{\boxed{\ai \bj \wedge \ai \aj \bk}}_{\text{Type 3}} \\ 
    X_{ik}(\underbrace{\ai \bj \wedge \aj \ak \bi}_{\text{Type 0}}) = \ai \bj \wedge \aj \ak \bi + \underbrace{\boxed{\ai \bj \wedge \ai \aj \bi}}_{\text{Type 1}} + \underbrace{\ai \bj \wedge \ai \aj \bk}_{\text{Type 3}} + \underbrace{\ai \bj \wedge \aj \ak \bk}_{\text{Type 0}} \\
    X_{kj}(\underbrace{\ai \bj \wedge \ai \aj \bk}_{\text{Type 3}}) = \ai \bj \wedge \ai \aj \bk + \underbrace{\boxed{\ai \bj \wedge \ai \aj \bj}}_{\text{Type 2}} + \underbrace{\ai \bj \wedge \ai \ak \bj + \ai \bj \wedge \ai \ak \bk}_{\text{Type 0}} \\
    X_{kj}(\underbrace{\ai \bj \wedge \ai \aj \bi}_{\text{Type 1}}) = \ai \bj \wedge \ai \aj \bi + \underbrace{\boxed{\ai \bj \wedge \ai \ak \bi}}_{\text{Type 4}} \\
    X_{ij}(\underbrace{\ai \bi \wedge \ai \bi \bj}_{\text{Prop \ref{prop:aibi}}}) = \ai \bi \wedge \ai \bi \bj + \underbrace{\boxed{\ai \bj \wedge \ai \bi \bj}}_{\text{Type 5}} \\
    Y_{k \ell}(\underbrace{\ai \bj \wedge \ai \al \bi}_{\text{Type 4}}) = \ai \bj \wedge \ai \al \bi + \underbrace{\boxed{\ai \bj \wedge \ai \bi \bk}}_{\text{Type 6}} \\
    X_{ij}(\underbrace{\ai \bi \wedge \aj \ak \bj}_{\text{Prop \ref{prop:aibi}}}) = \ai \bi \wedge \aj \ak \bj + \underbrace{\boxed{\ai \bj \wedge \aj \ak \bj}}_{\text{Type 7}} + \underbrace{\ai \bj \wedge \ai \ak \bj}_{\text{Type 0}} + \underbrace{\ai \bi \wedge \ai \ak \bj}_{\text{Prop \ref{prop:aibi}}} \\
    X_{ij}(\underbrace{\aj \bj \wedge \aj \bi \bj}_{\text{Prop \ref{prop:aibi}}}) = \aj \bj \wedge \aj \bi \bj + \underbrace{\boxed{\ai \bj \wedge \aj \bi \bj}}_{\text{Type 8}} + \underbrace{\ai \bj \wedge \ai \bi \bj}_{\text{Type 5}} + \underbrace{\aj \bj \wedge \ai \bi \bj}_{\text{Prop \ref{prop:aibi}}} \\
    Y_{k \ell}(\underbrace{\ai \bj \wedge \aj \al \bj}_{\text{Type 7}}) = \ai \bj \wedge \aj \al \bj + \underbrace{\boxed{\ai \bj \wedge \aj \bj \bk}}_{\text{Type 9}} \\
    Y_{j \ell}(\underbrace{\ai \bj \wedge \ak \al \bi}_{\text{Type 0}}) = \ai \bj \wedge \ak \al \bi + \underbrace{\boxed{\ai \bj \wedge \ak \bi \bj}}_{\text{Type 10}} \\
    Y_{k \ell}(\underbrace{\ai \bj \wedge \al \bj \bk}_{\text{Type 0}}) = \ai \bj \wedge \al \bj \bk + \underbrace{\boxed{\ai \bj \wedge \bi \bj \bk}}_{\text{Type 11}}. \qedhere
\end{gather*} \end{proof}

We have the following proposition.

\begin{proposition}[Type $\bi \bj \wedge \square \square \square$] \label{prop:bibj}
    When $g \geq 4$, all basis elements of the form $\bi \bj \wedge \square \square \square$ in $\bigwedge^2 B_3^{b_i}$ lie in $\operatorname{im}(\sigma_*)$.
\end{proposition}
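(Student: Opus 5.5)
The plan is to follow the pattern of Propositions \ref{prop:aibi} and \ref{prop:aibj}: start from basis elements already known to lie in $\operatorname{im}(\sigma_*)$ and apply the generators $Y_{ij}$ of $\llsp$. The key identity is that $Y_{ik}$ with $k \neq i$ sends $\ai \mapsto \ai + \bk$ and fixes every $\bar{b}$. Hence applying $Y_{ik}$ to an element $\ai \bj \wedge v$ produces $\ai \bj \wedge Y_{ik}(v) + \bk \bj \wedge Y_{ik}(v)$. Whenever $v$ is fixed by $Y_{ik}$, this isolates the new term $\bk \bj \wedge v$ modulo things already in $\operatorname{im}(\sigma_*)$. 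We also have $Y_{ii}(\ai \bj) = \ai \bj + \bi \bj + \bj$, which produces terms $\bj \wedge v$. These must be handled as well, either by the analogous $\bar{b}_j \wedge \square\square\square$ proposition or by choosing $Y_{ik}$ with $k \neq i$ so the linear term does not appear.

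The first step is to list, up to reindexing, the basis elements $\bi \bj \wedge v$ with $v$ a degree-3 element of $\mathcal{B}_3^{b_i}$. Split them according to how the indices of $v$ meet $\{i,j\}$. For each type I will choose a source element $\ak \bj \wedge v'$, where $k$ is a fresh index not among the indices of $v$. This requires $g \geq 4$, and I may need a fifth index in the worst case ($v = \ak \bl \bm$ type), which I will avoid by reusing $i$ or $j$ carefully. The source is chosen so that $Y_{ki}$ or $Y_{ik}$ fixes $v'$ or moves it only by terms already in the image. By Proposition \ref{prop:aibj}, every $\ak \bj \wedge \square\square\square$ lies in $\operatorname{im}(\sigma_*)$, as does $\ak \bj \wedge Y(v')$ after expanding. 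The correction terms then land in types of the form $\bi\bj \wedge w$ with $w$ having fewer $\bar{a}$'s.

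This suggests an induction on the number of $\bar{a}$'s in $v$. The base cases are $v = \bk \bl \bm$ and $v = \bar{b}\bar{b}\bar{b}$ sharing indices with $\bi\bj$, where $Y$ moves nothing in $v$ and the computation is immediate. One then proceeds to $v = \ak \bar{b}\bar{b}$ and finally $v = \ak \al \bar{b}$. In those cases $Y_{ik}$ changes $\ak$ into $\ak + \bi$, and the error terms $\bi\bj \wedge \bi \bar{b}\bar{b}$ and $\bk \bj \wedge \bi \bar{b}\bar{b}$ have one fewer $\bar{a}$, so they are already handled.

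The main obstacle is the types where $v$ contains $\ai$ or $\aj$, such as $\bi\bj \wedge \ai \aj \bk$ or $\bi \bj \wedge \ai \bi \bk$. Here any $Y$ that creates $\bi$ from some $\ak$ also perturbs $v$, and choosing $k$ to avoid all indices is impossible. For these I would first try the trick used for Type 6 in Proposition \ref{prop:aibi}. That means applying the same $Y$ to two different known elements and cancelling the common unwanted contribution, possibly combined with an $X_{ij}$ move that converts $\bi \bk$ into $\bi\bj$ and leaves an error term $\bi \bk \wedge \square$ of a type already established.
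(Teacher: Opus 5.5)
There is a genuine gap. Your core move matches the paper's first step: apply $Y_{ik}$ (with $k\neq i$) to a known element $\ak\bj\wedge v$ and read off $\bi\bj\wedge v$ when $Y_{ik}$ fixes $v$. But the elements where $v$ contains $\ai$ are left unproved. Examples are $\bi\bj\wedge\ai\aj\bk$, $\bi\bj\wedge\ai\bi\bk$ and $\bi\bj\wedge\ai\bk\bl$. You call these the main obstacle and only propose to \emph{try} the Type~6 cancellation trick, which is not an argument.

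The missing observation is that perturbing $v$ is harmless. Choose $k\neq i$ with $\ak,\bk\notin\{\ai,\bar{x},\bar{y}\}$. This is possible because $v$ involves at most three indices and $g\geq 4$, and the condition $\bk\notin v$ keeps $\bk\bar{x}\bar{y}$ of degree $3$. Then
\[
Y_{ik}(\ak\bj\wedge\ai\bar{x}\bar{y}) = \ak\bj\wedge\ai\bar{x}\bar{y} + \ak\bj\wedge\bk\bar{x}\bar{y} + \bi\bj\wedge\ai\bar{x}\bar{y} + \bi\bj\wedge\bk\bar{x}\bar{y}.
\]
The first two terms lie in $\operatorname{im}(\sigma_*)$ by Proposition~\ref{prop:aibj}, or by Proposition~\ref{prop:aibi} if $k=j$. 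The last term no longer involves $\ai$, so it falls under the first case. This is exactly the paper's second case. Since $\bk\bar{x}\bar{y}$ has one fewer $\bar{a}$ than $v$, your own induction would have covered these elements had you let $Y_{ik}$ move $v$ instead of insisting that it fix $v$.

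Your other difficulties come from requiring $k$ to avoid \emph{every} index of $v$. All that matters is $\ak\notin v$ (together with $\ai\notin v$), because $Y_{ik}$ fixes every $\bar{b}$ and fixes $\aj$ whenever $k\neq j$. So $\aj\in v$ is no obstacle, and no fifth index is ever needed. The induction on the number of $\bar{a}$'s is also unnecessary. When $\ai\notin v$, a single application of $Y_{ik}$ works: $v$ has at most two $\bar{a}$ factors, so $g\geq 4$ always leaves some $k\neq i$ with $\ak\notin v$. If that $k$ is $j$, use Proposition~\ref{prop:aibi} for the source.

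Finally, of your two options for the linear term produced by $Y_{ii}$, only avoiding $Y_{ii}$ is legitimate. In the paper the $\bj\wedge\square\square\square$ result (Proposition~\ref{prop:bi}) is deduced from this proposition, so invoking it here would be circular.
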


\begin{proof}
    We will show $\bi \bj \wedge \bar{x} \bar{y} \bar{z} \in \operatorname{im}(\sigma_*)$. Choose $1 \leq k \leq g$ such that $\ak \notin \{ \bar{x}, \bar{y}, \bar{z} \}$. Then, if $\ai \notin \{ \bar{x}, \bar{y}, \bar{z} \}$, we have \begin{equation} \label{eq:bibj}
        Y_{ik}(\underbrace{\ak \bj \wedge \bar{x} \bar{y} \bar{z}}_{\text{Prop \ref{prop:aibj}}}) = \ak \bj \wedge \bar{x} \bar{y} \bar{z} + \boxed{\bi \bj \wedge \bar{x} \bar{y} \bar{z}}. 
    \end{equation} Otherwise, we have \begin{gather*}
        Y_{ik}(\underbrace{\ak \bj \wedge \ai \bar{x} \bar{y}}_{\text{Prop \ref{prop:aibj}}}) = \ak \bj \wedge \ai \bar{x} \bar{y} + \underbrace{\ak \bj \wedge \bk \bar{x} \bar{y}}_{\text{Prop \ref{prop:aibj}}} + \; \boxed{\bi \bj \wedge \ai \bar{x} \bar{y}} + \bi \bj \wedge \bk \bar{x} \bar{y},
    \end{gather*} where $\bi \bj \wedge \bk \bar{x} \bar{y} \in \operatorname{im}(\sigma_*)$ by Equation \eqref{eq:bibj}. \end{proof}

Propositions \ref{prop:aibi}, \ref{prop:aibj}, and \ref{prop:bibj} give us the following corollary.

\begin{corollary}[Type $\square \square {\wedge \; } {\square \square \square}$] \label{corollary:2wedge3}
    When $g \geq 4$, all basis elements of $\bigwedge^2 B_3^{b_i}$ of the form $\square \square {\wedge \; } {\square \square \square}$ lie in $\operatorname{im}(\sigma_*)$.
\end{corollary}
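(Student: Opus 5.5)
This is a bookkeeping statement, and I would prove it by reducing to the three preceding propositions. Every degree-2 element of the basis $\mathcal{B}_3^{b_i}$ is, by the definition in Section \ref{section:handlebodyBCJ}, one of exactly three kinds: $\ai \bi$ for $1 \leq i \leq g$; $\ai \bj$ for $i \neq j$; or $\bi \bj$ for $i<j$.

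So a basis element of $\bigwedge^2 B_3^{b_i}$ of the form $\square\square \wedge \square\square\square$ is $u \wedge v$, where $u$ is one of these three kinds and $v$ is a degree-3 element of $\mathcal{B}_3^{b_i}$. The plan is a case split on the kind of $u$.

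\begin{itemize}
\item If $u=\ai\bi$, Proposition \ref{prop:aibi} gives $u\wedge v \in \operatorname{im}(\sigma_*)$.
\item If $u=\ai\bj$ with $i\neq j$, Proposition \ref{prop:aibj} gives the same conclusion.
\item If $u=\bi\bj$, Proposition \ref{prop:bibj} applies.
\end{itemize}

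Each of these propositions is stated for all $g \geq 4$, so the hypothesis $g\geq 4$ of the corollary suffices.

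Two small points need care, and these are the only potential obstacles. \emph{Relabelling.} The propositions list their types "up to reindexing." Relabelling the handles is realized by permutation automorphisms in $\llsp$, under which $\operatorname{im}(\sigma_*)$ is invariant. So the corollary covers every index pattern, not only the representatives listed. \emph{Repeated factors.} The lists in Propositions \ref{prop:aibi} and \ref{prop:aibj} were written assuming distinct indices. I would check that they nonetheless cover the cases where $v$ shares factors with $u$. Proposition \ref{prop:bibj} is phrased for an arbitrary degree-3 $\bar{x}\bar{y}\bar{z}$, choosing an auxiliary $k$ with $\ak \notin\{\bar{x},\bar{y},\bar{z}\}$; such a $k$ exists because $v$ contains at most two $a$'s and $g\geq 4$.

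Once these points are confirmed, the union of the three propositions exhausts all basis elements of the stated form, and the corollary follows.
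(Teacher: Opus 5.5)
Your proposal is correct and is exactly the paper's argument: the corollary follows directly from Propositions \ref{prop:aibi}, \ref{prop:aibj}, and \ref{prop:bibj}, by splitting on whether the degree-2 factor is $\ai \bi$, $\ai \bj$, or $\bi \bj$. The repeated-factor check you flag is not needed, because those propositions are stated for all basis elements of their forms, and their lists already include shared factors (e.g.\ $\ai \bi \wedge \ai \aj \bi$); the phrase ``distinct indices'' there refers to the labels $i,j,k,\ell$, not to the factors.
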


We have the following proposition.

\begin{proposition}[Type $\bi \wedge \square \square \square$] \label{prop:bi}
    When $g \geq 4$, all basis elements of the form $\bi \wedge \square \square \square$ in $\bigwedge^2 B_3^{b_i}$ lie in $\operatorname{im}(\sigma_*)$.
\end{proposition}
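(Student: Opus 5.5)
We deduce the $\bi \wedge \square\square\square$ case from Corollary \ref{corollary:2wedge3} by the same device used for Types 9--14 in Theorem \ref{theorem:johnsonkernel}: apply $Y_{kk}$ to a wedge whose first factor is $\ak \bi$. By Equation \eqref{eq:Yij(ak)}, $Y_{kk}(\ak) = \ak + \bk + 1$, so
$$Y_{kk}(\ak \bi) = \ak \bi + \bk \bi + \bi .$$
The $\bi$ term is the linear term we want to isolate.

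Fix a degree-3 basis element $\bar{x}\bar{y}\bar{z} \in \mathcal{B}_3^{b_i}$ and choose an index $k \neq i$ such that neither $\ak$ nor $\bk$ appears among $\bar{x},\bar{y},\bar{z}$. Such a $k$ exists: the monomial involves at most three indices, and we need one different from those and from $i$. This is immediate if $i$ already occurs in the monomial, which is the common case; otherwise $g \geq 4$ may not suffice (for example $\bi \wedge \bj\bk\bl$ with $i,j,k,l$ distinct), and we handle that separately below.

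In the good case, $Y_{kk}$ fixes $\bar{x}\bar{y}\bar{z}$, since $Y_{kk}$ only moves $a_k$. Therefore
$$Y_{kk}(\ak \bi \wedge \bar{x}\bar{y}\bar{z}) = \ak \bi \wedge \bar{x}\bar{y}\bar{z} + \bk \bi \wedge \bar{x}\bar{y}\bar{z} + \boxed{\bi \wedge \bar{x}\bar{y}\bar{z}} .$$
The left side lies in $\operatorname{im}(\sigma_*)$ because $\ak\bi \wedge \bar{x}\bar{y}\bar{z}$ does by Proposition \ref{prop:aibj} and $\operatorname{im}(\sigma_*)$ is $\llsp$-invariant. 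The first two terms on the right lie in $\operatorname{im}(\sigma_*)$ by Corollary \ref{corollary:2wedge3}. Hence the boxed term does too.

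When no free index $k$ exists, we use $Y_{kk}$ anyway with $k$ chosen so that $\ak$ does not appear in the monomial but $\bk$ may. Since $B_3^{b_i}$ contains no element with three $\bar{a}$'s and the degree-3 factor must contain a $\bar{b}$, such a $k$ can always be found. Then $Y_{kk}$ still fixes the monomial, and the computation above goes through verbatim. So the real requirement is only that $\ak$ be absent from $\bar{x}\bar{y}\bar{z}$ and $k \neq i$. With $g \geq 4$ and at most two $\bar{a}$'s in the monomial, we pick $k$ outside $\{i\}$ and the at most two $a$-indices.

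The main obstacle is a possible collision with $k=i$: for instance $\bi \wedge \aj\al\bm$ with $\{j,l\}$ and $i$ exhausting the choices when $g=3$. For $g \geq 4$ we always have a fourth index, so I expect no real obstruction. I would still check the edge case where $\bi \wedge \bar{x}\bar{y}\bar{z}$ has $\bar{x}\bar{y}\bar{z}$ containing $\ai$, for instance $\bi \wedge \ai\aj\bi$. Here $k \notin \{i,j\}$ works, since $Y_{kk}$ does not touch $a_i$ or $a_j$.
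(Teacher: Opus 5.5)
Your proof is correct and is essentially the paper's argument: apply $Y_{kk}$ to $\ak \bi \wedge \bar{x}\bar{y}\bar{z}$ with $k \neq i$ and $\ak$ absent from the monomial, which is always possible for $g \geq 4$ since degree-3 basis elements contain at most two $\bar{a}$'s. Then Propositions \ref{prop:aibj} and \ref{prop:bibj} isolate $\bi \wedge \bar{x}\bar{y}\bar{z}$; your initial demand that $\bk$ also be absent is unnecessary, as you noticed yourself, because $Y_{kk}$ fixes $\bk$.
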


\begin{proof}
    We will show $\bi \wedge \bar{x} \bar{y} \bar{z} \in \operatorname{im}(\sigma_*)$. Choose $1 \leq j \leq g$ such that $\aj \notin \{ \bar{x}, \bar{y}, \bar{z} \}$. Then we have \begin{gather*}
        Y_{jj}(\underbrace{\aj \bi \wedge \bar{x} \bar{y} \bar{z}}_{\text{Prop \ref{prop:aibj}}}) = \aj \bi \wedge \bar{x} \bar{y} \bar{z} + \underbrace{\bi \bj \wedge \bar{x} \bar{y} \bar{z}}_{\text{Prop \ref{prop:bibj}}} + \boxed{\bi \wedge \bar{x} \bar{y} \bar{z}}. \qedhere
    \end{gather*} \end{proof}

We have the following proposition.

\begin{proposition}[$\bigwedge^2 B_2^{b_i}$]  \label{prop:remaining2x2}
    When $g \geq 4$, the subspace $\bigwedge^2 B_2^{b_i} \subset \bigwedge^2 B_3^{b_i}$ lies in $\operatorname{im}(\sigma_*)$.
\end{proposition}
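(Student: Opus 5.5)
The plan is to handle separately the index-matched and the non-index-matched basis elements of $\bigwedge^2 B_2^{b_i}$.

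\textbf{Non-index-matched elements.} These come for free. The inclusion $\mathcal{HK}_g^1 \subset \mathcal{HI}_g^1$, together with $B_2^{b_i} \subset B_3^{b_i}$, gives a commutative square of maps $\sigma_*$ on $H_2(-;\F_2)$. Abelian cycles $\{f,h\}$ with $f,h \in \mathcal{HK}_g^1$ are also abelian cycles in $\mathcal{HI}_g^1$, and they have the same image $\sigma(f)\wedge\sigma(h)$. So Theorem \ref{theorem:johnsonkernel} places every non-index-matched basis element of $\bigwedge^2 B_2^{b_i}$ in $\operatorname{im}(\sigma_*)$ for $\mathcal{HI}_g^1$. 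It remains to treat the index-matched elements $\ai \bar{x} \wedge \bi \bar{y}$, where $\bar{x} = \bar{b}_l$ for some $l$ and $\bar{y} \in \{\aj, \bj\}$ or $\bar{y}$ is empty.

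\textbf{Lowering degree with $Y_{kk}$.} Such elements have no degree-$3$ source from $\mathcal{HK}_g^1$. Instead I would manufacture them from the degree-$3$ results already proved, using the constant term in $Y_{kk}(\ak) = \ak + \bk + 1$. Fix an index $k$ distinct from $i$, from $l$, and from the index of $\bar{y}$. This is possible since $g \geq 4$ and at most three indices are in use. Since $Y_{kk}$ fixes every $\bar{b}$ and every $\am$ with $m \neq k$, it fixes $\ai \bar{x}$ and $\bar{y}$. Hence
\[
Y_{kk}(\ai \bar{x} \wedge \ak \bi \bar{y}) = \ai \bar{x} \wedge \ak \bi \bar{y} + \ai \bar{x} \wedge \bk \bi \bar{y} + \ai \bar{x} \wedge \bi \bar{y}.
\]

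\textbf{Case $\bar{y} \in \{\aj,\bj\}$.} Here $\ak \bi \bar{y}$ and $\bk \bi \bar{y}$ are degree-$3$ elements of $\mathcal{B}_3^{b_i}$, of types $\ai\aj\bk$, $\ai\bj\bk$ and $\bi\bj\bk$ after reindexing. So the first two terms on the right are of type $\square\square\wedge\square\square\square$ and lie in $\operatorname{im}(\sigma_*)$ by Corollary \ref{corollary:2wedge3}. The left side lies in $\operatorname{im}(\sigma_*)$ by the same corollary together with closure of $\operatorname{im}(\sigma_*)$ under $\llsp$. Therefore $\ai\bar{x}\wedge\bi\bar{y} \in \operatorname{im}(\sigma_*)$. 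This includes the degenerate case $\bar{x} = \bi$, since $\ai \bi$ is fixed by $Y_{kk}$.

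\textbf{Case $\bar{y}$ empty.} For $\ai \bar{x} \wedge \bi$, apply the same identity one degree lower:
\[
Y_{kk}(\ai\bar{x}\wedge\ak\bi) = \ai\bar{x}\wedge\ak\bi + \ai\bar{x}\wedge\bk\bi + \ai\bar{x}\wedge\bi.
\]
The terms $\ai\bar{x}\wedge\ak\bi$ and $\ai\bar{x}\wedge\bi\bk$ are index-matched elements with $\bar{y} = \ak$ and $\bar{y} = \bk$. These were handled in the previous case, or are non-index-matched and covered by Theorem \ref{theorem:johnsonkernel}. So the remaining term also lies in the image.

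\textbf{Expected obstacle.} The main bookkeeping risk is checking that each three-letter monomial produced is genuinely a basis element of $\mathcal{B}_3^{b_i}$ rather than a degenerate product. This is exactly why $k$ must be chosen fresh, which uses $g \geq 4$. One must also confirm that no case needs an index-matched degree-$3$ wedge from $IM^3$, which is not yet known to lie in the image. The identities above use only $2\wedge 3$ wedges and earlier $2\wedge 2$ wedges, so this should be fine.
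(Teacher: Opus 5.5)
Your proof is correct and follows the paper's argument. The non-index-matched elements come from Theorem \ref{theorem:johnsonkernel}, and the index-matched elements $\ai\bar{x}\wedge\bi\bar{y}$ come from the same identity $Y_{kk}(\ai\bar{x}\wedge\ak\bi\bar{y})$ together with Corollary \ref{corollary:2wedge3}; your requirement that $k$ also avoid the index of $\bar{y}$ is slightly more careful than the paper's stated condition, since the paper's condition allows $\bar{y}=\bk$, where the identity collapses. The only variation is for $\ai\bar{x}\wedge\bi$: the paper applies $Y_{kk}$ to $\bi\wedge\ai\ak\bar{x}$ and invokes Proposition \ref{prop:bi}, whereas you apply it to $\ai\bar{x}\wedge\ak\bi$ and reduce to the $2\wedge 2$ index-matched cases you had just established, which works equally well.
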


\begin{proof}
In Theorem \ref{theorem:johnsonkernel}, we showed that the subspace spanned by all basis elements of $\bigwedge^2 B_2^{b_i}$ which are not index-matched is in $\operatorname{im}(\sigma_*)$. Up to reindexing, the following 10 index-matched elements are the remaining basis elements of $\bigwedge^2 B_2^{b_i}$ that we must show lie in $\operatorname{im}(\sigma_*)$: \begin{multicols}{4}
    \begin{enumerate}
    \item $\ai \bi \wedge \ai \bj$
    \item $\ai \bi \wedge \aj \bi$
    \item $\ai \bi \wedge \bi \bj$
    \item $\ai \bj \wedge \aj \bi$
    \item $\ai \bj \wedge \aj \bk$
    \item $\ai \bj \wedge \ak \bi$
    \item $\ai \bj \wedge \bi \bj$
    \item $\ai \bj \wedge \bi \bk$
    \item $\bi \wedge \ai \bi$
    \item $\bi \wedge \ai \bj$
\end{enumerate}
\end{multicols}
Note that the basis elements of Types 1--8 are (possibly after reindexing) of the form $\ai \bar{x} \wedge \bi \bar{y}$. Given such an element, choose $1 \leq k \leq g$ for which $k \neq i$ and $\ak \notin \{ \bar{x}, \bar{y} \}$. Then we have \begin{gather*} Y_{kk}(\underbrace{\ai \bar{x} \wedge \ak \bi \bar{y}}_{\text{Corollary \ref{corollary:2wedge3}}}) = \ai \bar{x} \wedge \ak \bi \bar{y} + \underbrace{\ai \bar{x} \wedge \bi \bk \bar{y}}_{\text{Corollary \ref{corollary:2wedge3}}} + \; \boxed{\ai \bar{x} \wedge \bi \bar{y}}. \end{gather*} It follows that all elements of the Types 1--8 lie in $\operatorname{im}(\sigma_*)$. Similarly, we have \begin{gather*}
    Y_{kk}(\underbrace{\bi \wedge \ai \ak \bar{x}}_{\text{Prop \ref{prop:bi}}}) = \bi \wedge \ai \ak \bar{x} + \underbrace{\bi \wedge \ai \bk \bar{x}}_{\text{Prop \ref{prop:bi}}} + \; \boxed{\bi \wedge \ai \bar{x}}
\end{gather*} when $\bar{x} = \bi$ (Type 9) or $\bar{x} = \bj$ (Type 10). \end{proof}

We have now shown that all terms of the form $u \wedge v$ with $u \in B_2^{b_i}$ and $v \in B_3^{b_i}$ lie in $\operatorname{im}(\sigma_*)$. From now on, we call such elements \emph{lower order terms (LOTs)}. It remains to address wedge products of two degree-3 polynomials in $B_3^{b_i}$.

\begin{proposition}[Type $\ai \aj \bi \wedge \square \square \square$] \label{prop:aiajbi}
    When $g \geq 4$, all non-index-matched basis elements of the form $\ai \aj \bi \wedge \square \square \square$ in $\bigwedge^2 B_3^{b_i}$ lie in $\operatorname{im}(\sigma_*)$.
\end{proposition}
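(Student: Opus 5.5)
The plan follows the pattern of Propositions \ref{prop:aibi}--\ref{prop:bi}. First I realize one seed element of the form $\ai \aj \bi \wedge \square\square\square$ directly by an abelian cycle. Then I spread it to the remaining non-index-matched types using the $\llsp$-generators $X_{ij}, Y_{ij}$ of Proposition \ref{prop:llSpgenerators}. Throughout, every term $u \wedge v$ with $\deg u \le 2$ counts as a lower order term (LOT) and lies in $\operatorname{im}(\sigma_*)$ by Corollary \ref{corollary:2wedge3}, Proposition \ref{prop:bi} and Proposition \ref{prop:remaining2x2}, so LOTs may be discarded freely.

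\textbf{Seed cycle.} I take the two commuting bounding pair annulus twists $T_y T_{y'}^{-1}$ and $T_w T_{w'}^{-1}$ of Figure \ref{figure:aibiCurves}. Their supports lie in disjoint subsurfaces containing handles $1,2$ and $3,4$ respectively, which is where $g \ge 4$ is needed. This gives
\[
\sigma_*(\{T_yT_{y'}^{-1}, T_wT_{w'}^{-1}\}) = (\bar a_1\bar b_1\bar a_2 + \bar a_1 \bar b_1)\wedge(\bar a_3\bar b_3\bar a_4 + \bar a_3\bar b_3).
\]
Modulo LOTs this is $\bar a_1\bar a_2\bar b_1 \wedge \bar a_3\bar a_4\bar b_3$, which is not index-matched. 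Reindexing, every element $\ai\aj\bi \wedge \ak\al\bk$ with four distinct indices lies in $\operatorname{im}(\sigma_*)$.

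\textbf{Propagation.} I list, up to reindexing, the basis elements $\ai\aj\bi\wedge v$ with $v$ of degree $3$ that are not in $IM^3$. These are the $v$ from the five degree-3 families whose $\bar a$- and $\bar b$-indices avoid the index-matching pattern with $\ai\aj\bi$. I then reach them in stages, exactly as in Propositions \ref{prop:aibi} and \ref{prop:aibj}:
\begin{itemize}
\item Applying $X_{\ell m}$ to the seed moves one index of the second factor. For example, $X_{m\ell}$ acting on $\ak\al\bk$ produces $\ak\al\bm$-type terms, giving the $\ai\aj\bi\wedge\ak\al\bm$ types.
\item Applying $Y_{kk}$ or $Y_{k\ell}$ with $k \notin\{i,j\}$ converts an $\ak$ in the second factor into $\bk$ (plus a constant, which only contributes LOTs). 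This fixes the first factor and yields the types $\ai\aj\bi\wedge\ak\bk\bl$, $\ai\aj\bi\wedge\ak\bl\bm$ and $\ai\aj\bi\wedge\bk\bl\bm$.
\item Types in which the second factor shares an index with $\{i,j\}$ are obtained by applying $X$ or $Y$ with indices chosen so that the first factor is either fixed or changes only by already-established terms.
\end{itemize}
Each such identity has the form $M(w) = w + \boxed{\text{target}} + (\text{known terms}) + \text{LOTs}$, and I order the types so that the known terms always precede the target.

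\textbf{Main obstacle.} The delicate part is the types whose second factor involves the indices $i$ or $j$. There, any $X$ or $Y$ touching those indices also alters $\ai\aj\bi$, and this can create index-matched cross terms in $IM^3$, which are not available. For each such type I will pick the generator so that it acts trivially on one of the two factors. For instance, use $Y_{\ell m}$ with $\ell,m \notin \{i,j\}$, or use $X_{\ell i}$, which fixes $\bi$ but sends $\ai \mapsto \ai + \al$. I will then check that the resulting extra terms, such as $\al\aj\bi \wedge v$, are either non-index-matched and already obtained, or are LOTs. When this fails, I expect a green-term trick as in Proposition \ref{prop:aibi}: apply the same operator to a second known element, so that the unwanted combination appears in both identities and cancels. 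I expect this case analysis, and confirming that no $IM^3$ term is forced, to be the bulk of the work.
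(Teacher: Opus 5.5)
Your proposal follows the paper's proof: the paper also seeds with an abelian cycle of two disjoint bounding pair annulus twists, which gives $\ai\aj\bi \wedge \ak\al\bk$ modulo LOTs, and then reaches the other non-index-matched types with $X$/$Y$ moves that fix $\ai\aj\bi$. The obstacle you anticipate does not arise: the second factor can share only $\aj$ with $\ai\aj\bi$, and $X_{j\ell}$ (which moves only $a_\ell$ and $b_j$) fixes $\ai\aj\bi$ while sending $\ak\al\bk$ to $\ak\al\bk + \aj\ak\bk$, after which $X_{k\ell}$ and $Y_{kk}$ give $\ai\aj\bi \wedge \aj\ak\bl$ and $\ai\aj\bi \wedge \aj\bk\bl$ with no green-term cancellation. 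Also fix one index slip: the $\ak\al\bm$ type comes from $X_{km}$, not $X_{m\ell}$, since $X_{m\ell}(\ak\al\bk) = \ak\al\bk + \ak\am\bk$.
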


\begin{proof}

We will show $\ai \aj \bi \wedge \bar{x} \bar{y} \bar{z} \in \operatorname{im}(\sigma_*)$ when $\bar{x}, \bar{y}, \bar{z} \notin \{\bi, \bj, \ai \}$. Up to reindexing, the following are the eight basis elements of this form:
\begin{multicols}{3}
\begin{enumerate} 
    \item $\ai \aj \bi \wedge \aj \ak \bk$
    \item $\ai \aj \bi \wedge \aj \ak \bl$
    \item $\ai \aj \bi \wedge \aj \bk \bl$ 
    \item $\ai \aj \bi \wedge \ak \al \bk$
    \item $\ai \aj \bi \wedge \ak \al \bm$
    \item $\ai \aj \bi \wedge \ak \bk \bl$  
    \item $\ai \aj \bi \wedge \ak \bl \bm$ 
    \item $\ai \aj \bi \wedge \bk \bl \bm$ 
    \end{enumerate}
    \end{multicols} Consider the commuting bounding pair annulus twists $T_x T_{x'}^{-1}$ and $T_y T_{y'}^{-1}$ as in Figure \ref{figure:3wedge3boundingpairs}.

\begin{remark}
Note that Types 5, 7, and 8 only appear when $g \geq 5$. When $g = 4$, no basis elements require 5 distinct indices, and our calculations for these carefully use at most 4 distinct indices.
\end{remark}

\begin{figure}[htbp]
    \centering
    \includegraphics[scale=.4]{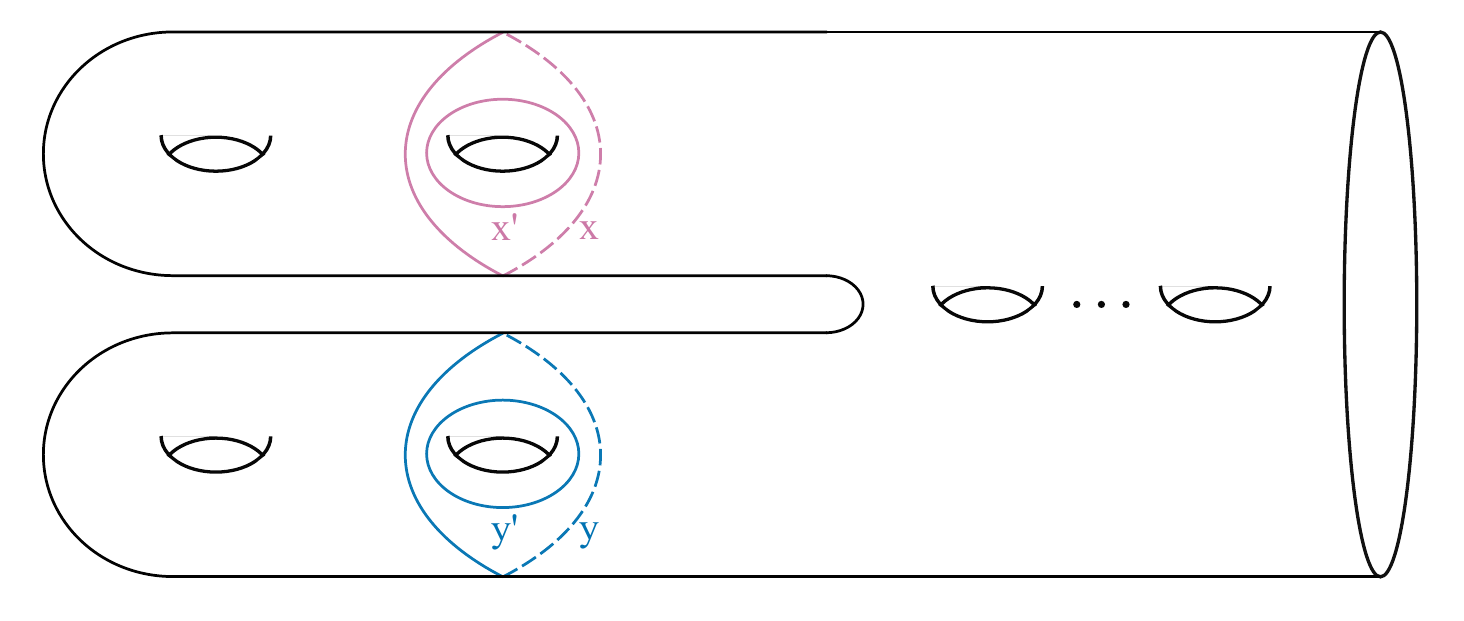}
    \caption{Bounding pair annulus twists $T_x T_{x'}^{-1}$ and $T_y T_{y'}^{-1}$}
    \label{figure:3wedge3boundingpairs}
\end{figure} \noindent For some choice of indexing, we have $$\sigma_*(\{T_x T_{x'}^{-1}, T_y T_{y'}^{-1}\}) = \underbrace{\boxed{\ai \aj \bi \wedge \ak \al \bk}}_{\text{Type 4}} + \; \text{LOTs}.$$ Now we will show that the remaining element types lie in $\operatorname{im}(\sigma_*)$. We have \begin{gather*}
X_{j \ell}(\underbrace{\ai \aj \bi \wedge \ak \al \bk}_{\text{Type 4}}) = \ai \aj \bi \wedge \ak \al \bk + \underbrace{\boxed{\ai \aj \bi \wedge \aj \ak \bk}}_{\text{Type 1}} \\
X_{k \ell}(\underbrace{\ai \aj \bi \wedge \aj \ak \bk}_{\text{Type 1}}) = \ai \aj \bi \wedge \aj \ak \bk + \underbrace{\boxed{\ai \aj \bi \wedge \aj \ak \bl}}_{\text{Type 2}} \\
Y_{kk}(\underbrace{\ai \aj \bi \wedge \aj \ak \bl}_{\text{Type 2}}) = \ai \aj \bi \wedge \aj \ak \bl + \underbrace{\boxed{\ai \aj \bi \wedge \aj \bk \bl}}_{\text{Type 3}} + \underbrace{\ai \aj \bi \wedge \aj \bl}_{\text{LOT}} \\
X_{km}(\underbrace{\ai \aj \bi \wedge \ak \al \bk}_{\text{Type 4}}) = \ai \aj \bi \wedge \ak \al \bk + \underbrace{\boxed{\ai \aj \bi \wedge \ak \al \bm}}_{\text{Type 5}} \\
Y_{\ell \ell}(\underbrace{\ai \aj \bi \wedge \ak \al \bk}_{\text{Type 4}}) = \ai \aj \bi \wedge \ak \al \bk + \underbrace{\boxed{\ai \aj \bi \wedge \ak \bk \bl}}_{\text{Type 6}} + \underbrace{\ai \aj \bi \wedge \ak \bk}_{\text{LOT}} \\
Y_{\ell m}(\underbrace{\ai \aj \bi \wedge \ak \am \bm}_{\text{Type 4}}) = \ai \aj \bi \wedge \ak \am \bm + \underbrace{\boxed{\ai \aj \bi \wedge \ak \bl \bm}}_{\text{Type 7}} \\
Y_{kk}(\underbrace{\ai \aj \bi \wedge \ak \bl \bm}_{\text{Type 7}}) = \ai \aj \bi \wedge \ak \bl \bm + \underbrace{\boxed{\ai \aj \bi \wedge \bk \bl \bm}}_{\text{Type 8}} + \underbrace{\ai \aj \bi \wedge \bl \bm}_{\text{LOT}}. \qedhere
\end{gather*} \end{proof}

We have the following proposition.

\begin{proposition}[Type $\ai \aj \bk \wedge \square \square \square$] \label{prop:aiajbk}
    When $g \geq 4$, all non-index-matched basis elements of the form $\ai \aj \bk \wedge \square \square \square$ in $\bigwedge^2 B_3^{b_i}$ lie in $\operatorname{im}(\sigma_*)$.
\end{proposition}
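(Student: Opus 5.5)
The plan is to treat $\ai \aj \bk \wedge \bar{x}\bar{y}\bar{z}$ as a perturbation of the elements already handled in Proposition \ref{prop:aiajbi}, using the operators $X_{ki}$ and $X_{kj}$, and to discard all lower-order terms using Propositions \ref{prop:aibi}--\ref{prop:remaining2x2}. Since $i$, $j$, $k$ are distinct, we have
$$X_{ki}(\ai \aj \bi) = \ai \aj \bi + \ai \aj \bk$$
(only $\bi$ moves, to $\bi + \bk$). So for a degree-3 basis element $\bar{x}\bar{y}\bar{z}$ fixed by $X_{ki}$, applying $X_{ki}$ to $\ai \aj \bi \wedge \bar{x}\bar{y}\bar{z}$ yields $\boxed{\ai \aj \bk \wedge \bar{x}\bar{y}\bar{z}}$ modulo an element of $\operatorname{im}(\sigma_*)$. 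Here $X_{ki}$ fixes $\bar{x}\bar{y}\bar{z}$ exactly when it involves neither $\ai$ nor $\bk$. We also need $\ai\aj\bi \wedge \bar{x}\bar{y}\bar{z}$ to be non-index-matched, which is the hypothesis of Proposition \ref{prop:aiajbi}: the factors avoid $\bi$, $\bj$, $\ai$. Combining these, the first case is the one where $\bar{x},\bar{y},\bar{z} \notin \{\ai, \bi, \bj, \bk\}$; it follows directly. Swapping the roles of $i$ and $j$ with $X_{kj}$ applied to $\ai\aj\bj$ handles the symmetric case $\bar{x},\bar{y},\bar{z} \notin \{\aj, \bi, \bj, \bk\}$. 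I will call the elements covered by either case Type 0.

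The remaining elements are those whose second factor meets both $\{\ai,\bi,\bj,\bk\}$ and $\{\aj,\bi,\bj,\bk\}$, and which are not index-matched: the second factor contains no $\bi$ when paired with $\ai$, and no $\bj$ when paired with $\aj$. The usable elements here are those involving $\bk$, or containing both $\ai$ and $\aj$, subject to these constraints. Up to reindexing, I will list them explicitly, for example
$$\ai\aj\bk \wedge \ak\al\bk,\quad \ai\aj\bk\wedge \ai\aj\bl,\quad \ai\aj\bk\wedge \ak\bk\bl,\quad \ai\aj\bk\wedge \bk\bl\bm,\ \dots$$
Note that an element like $\ai\aj\bk \wedge \ak \bar{y}\bar{z}$ is index-matched and hence excluded. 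For each listed type I will mimic the proof of Proposition \ref{prop:aiajbi}. First, obtain one seed from an abelian cycle of two disjoint commuting bounding pair annulus twists, arranged so that $\sigma$ of one is $\ai\aj\bk$ plus LOTs; this is a $\llsp$-translate of the twist giving $\ai\aj\bi$. Then propagate with chains of moves. Examples are $X_{\ell k}$ to replace $\bk$ by $\bk+\bl$, $Y_{\ell\ell}$ or $Y_{\ell m}$ to convert $\al$ into $\bl$ (producing LOTs), and $X_{ij}$-type moves to introduce $\ai$ or $\aj$ into the second factor. At each step the correction terms should be Type 0, previously obtained types, or LOTs.

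The main obstacle will be the types where the second factor shares several letters with $\ai\aj\bk$, such as $\ai\aj\bk\wedge\ai\aj\bl$ and $\ai\aj\bk\wedge \ai\bj\bk$. In these cases every available $X$ or $Y$ move perturbs both wedge factors at once, so the chain produces extra cross terms. For these I expect to need a two-equation cancellation trick like the green terms in Proposition \ref{prop:aibi}: apply the same operator to two different known elements and add, so that the unwanted cross terms cancel. I also need to check the $g=4$ bookkeeping, so that each type that exists with at most four indices is reached using moves involving at most four indices.
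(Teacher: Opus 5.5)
\textbf{There is a genuine gap.} Your first step is the right idea, but the index bookkeeping is off, and this leaves far more open than necessary.

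With the paper's convention ($X_{ij}\colon a_j\mapsto a_j+a_i,\ b_i\mapsto b_i+b_j$) you get $X_{ki}(\ai\aj\bi)=\ai\aj\bi+\aj\ak\bi$. The operator that sends $\bi\mapsto\bi+\bk$ is $X_{ik}$, and it also sends $\ak\mapsto\ak+\ai$. So it fixes the second factor exactly when that factor avoids $\bi$ and $\ak$, not $\ai$ and $\bk$. Both conditions hold automatically for a non-index-matched $\ai\aj\bk\wedge\bar x\bar y\bar z$, since such an element must avoid $\bi,\bj,\ak$. The only extra requirement comes from invoking Proposition~\ref{prop:aiajbi} for $\ai\aj\bi\wedge\bar x\bar y\bar z$, namely $\ai\notin\{\bar x,\bar y,\bar z\}$. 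Symmetrically, $X_{jk}$ applied to $\ai\aj\bj\wedge\bar x\bar y\bar z$ covers the case $\aj\notin\{\bar x,\bar y,\bar z\}$. Your exclusion of $\bk$ is therefore spurious; for example, $\ai\aj\bk\wedge\al\bk\bl$ is already of Type 0.

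Because Type 0 is too small, you leave a large family open, and for it you give only a plan, parts of which are wrong:
\begin{itemize}
\item Your examples $\ai\aj\bk\wedge\ak\al\bk$ and $\ai\aj\bk\wedge\ak\bk\bl$ contain $\ak$, and your ``main obstacle'' $\ai\aj\bk\wedge\ai\bj\bk$ contains $\bj$. All three are index-matched, so they are not part of the statement. The first two also contradict your own remark about $\ak$.
\item The proposed seed cannot exist. Modulo $B_2$, the elements $\ai\aj\bi$ and $\ai\aj\bk$ correspond to $a_i\wedge a_j\wedge b_i$ and $a_i\wedge a_j\wedge b_k$ in $\bigwedge^3 H$. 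The $\operatorname{Sp}$-equivariant contraction $\bigwedge^3 H\to H$ separates them, sending them to $a_j$ and $0$. Hence no $\llsp$-translate of the twist giving $\ai\aj\bi$ has $\sigma$ equal to $\ai\aj\bk$ plus LOTs.
\end{itemize}

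The idea you are missing is that, once Type 0 is set up correctly, only second factors containing both $\ai$ and $\aj$ remain. That is a single type, $\ai\aj\bk\wedge\ai\aj\bl$ with $\ell\notin\{i,j,k\}$. Here $X_{j\ell}$ fixes $\ai\aj\bk$ and sends $\ai\al\bl\mapsto\ai\al\bl+\ai\aj\bl$. Applying it to the Type 0 element $\ai\aj\bk\wedge\ai\al\bl$ finishes the proof. No new abelian cycles or cancellation tricks are needed, and your worry that every move disturbs both wedge factors does not materialize.
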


\begin{proof} We will show $\ai \aj \bk \wedge \bar{x} \bar{y} \bar{z} \in \operatorname{im}(\sigma_*)$ when $\bar{x}, \bar{y}, \bar{z} \notin \{ \bi, \bj, \ak \}$. If $\ai \notin \{ \bar{x}, \bar{y}, \bar{z} \}$, then $\ai \aj \bi \wedge \bar{x} \bar{y} \bar{z}$ is not index-matched and lies in $\operatorname{im}(\sigma_*)$ by Proposition \ref{prop:aiajbi}. In this case, we have \begin{gather*}
    X_{ik}(\ai \aj \bi \wedge \bar{x} \bar{y} \bar{z}) = \ai \aj \bi \wedge \bar{x} \bar{y} \bar{z} + \boxed{\ai \aj \bk \wedge \bar{x} \bar{y} \bar{z}}.
\end{gather*} Similarly, if $\aj \notin \{ \bar{x}, \bar{y}, \bar{z} \}$, then Proposition \ref{prop:aiajbi} tells us $\ai \aj \bj \wedge \bar{x} \bar{y} \bar{z} \in \operatorname{im}(\sigma_*)$ and we have \begin{gather*}
    X_{jk}(\ai \aj \bj \wedge \bar{x} \bar{y} \bar{z}) = \ai \aj \bj \wedge \bar{x} \bar{y} \bar{z} + \boxed{\ai \aj \bk \wedge \bar{x} \bar{y} \bar{z}}. \end{gather*} (Note that, in both cases, we used the assumption $\bar{x}, \bar{y}, \bar{z} \notin \{ \bi, \bj, \ak \}$.) It remains to show $\ai \aj \bk \wedge \ai \aj \bl \in \operatorname{im}(\sigma_*)$. By the above discussion, we know $\ai \aj \bk \wedge \ai \al \bl \in \operatorname{im}(\sigma_*)$, and we have \begin{gather*}
    X_{j \ell}(\ai \aj \bk \wedge \ai \al \bl) = \ai \aj \bk \wedge \ai \al \bl + \boxed{\ai \aj \bk \wedge \ai \aj \bl}. \qedhere
\end{gather*} \end{proof}

We have the following proposition.

\begin{proposition}[Type $\ai \bi \bj \wedge \square \square \square$] \label{prop:aibibj}
    When $g \geq 4$, all non-index-matched basis elements of the form $\ai \bi \bj \wedge \square \square \square$ in $\bigwedge^2 B_3^{b_i}$ lie in $\operatorname{im}(\sigma_*)$.
\end{proposition}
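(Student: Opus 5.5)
Since $\ai \bi \bj$ contains both $\ai$ and $\bi$, it is index-matched with any degree-3 element containing $\bi$ (via $\ai$) or $\ai$ (via $\bi$). It is also index-matched with anything containing $\aj$ (via $\bj$). So the non-index-matched partners are the degree-3 basis elements $\bar{x}\bar{y}\bar{z}$ with $\bar{x},\bar{y},\bar{z} \notin \{\ai, \bi, \aj\}$. The plan is to obtain $\ai \bi \bj \wedge \bar{x}\bar{y}\bar{z}$ from the already established Propositions \ref{prop:aiajbi} and \ref{prop:aiajbk} by applying $Y_{jj}$, with lower order terms handled by Corollary \ref{corollary:2wedge3} and Proposition \ref{prop:remaining2x2}.

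\textbf{Main case.} Suppose first that $\bj \notin \{\bar{x},\bar{y},\bar{z}\}$. Then $\ai \aj \bi \wedge \bar{x}\bar{y}\bar{z}$ is a basis element whose second factor avoids $\{\bi,\bj,\ai\}$. By Proposition \ref{prop:aiajbi}, it is non-index-matched and lies in $\operatorname{im}(\sigma_*)$. The map $Y_{jj}$ sends $\aj \mapsto \aj + \bj + 1$ and fixes $\ai, \bi$ and every $b$. Since $\aj \notin \{\bar{x},\bar{y},\bar{z}\}$, it fixes $\bar{x}\bar{y}\bar{z}$ as well, so
\[ Y_{jj}(\ai \aj \bi \wedge \bar{x}\bar{y}\bar{z}) = \ai \aj \bi \wedge \bar{x}\bar{y}\bar{z} + \ai \bi \bj \wedge \bar{x}\bar{y}\bar{z} + \ai \bi \wedge \bar{x}\bar{y}\bar{z}. \]
The last term is a LOT, so $\ai \bi \bj \wedge \bar{x}\bar{y}\bar{z} \in \operatorname{im}(\sigma_*)$.

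\textbf{Remaining case: $\bj$ is among the partner's linear factors.} Here the partner has the form $\ak \bj \bar{z}$, or $\bj \bk \bl$, with all indices different from $i,j$.

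For $\ak \bj \bar{z}$ with $\bar{z} \in \{\ak, \al, \bl\}$, I would pick an auxiliary index $m \notin \{i,j,k,\ell\}$ where possible. Using the main case, $\ai \bi \bj \wedge \ak \bm \bar{z} \in \operatorname{im}(\sigma_*)$, and I would apply $X_{mj}$, which sends $\bm \mapsto \bm + \bj$ and $\aj \mapsto \aj + \am$. The first factor is fixed, since it involves no $\aj$ or $\bm$. This gives $\ai \bi \bj \wedge \ak \bj \bar{z}$ plus the original term. For $\bj \bk \bl$, I would similarly apply $X_{mj}$ to $\bm \bk \bl$, or apply $Y_{kk}$ to $\ak \bj \bl$, producing an extra LOT $\bj \bl$.

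\textbf{Main obstacle: $g=4$.} Partners such as $\ak \al \bj$ with $\{i,j,k,\ell\} = \{1,2,3,4\}$ leave no fifth index $m$. There I would instead use $X_{kj}$ or $X_{\ell j}$ on $\ai \bi \bj \wedge \ak \al \bk$ (main case), which sends $\bk \mapsto \bk + \bj$. This yields the target plus $\ai \bi \bj \wedge \ak \al \bk$, and since $\aj$ is absent nothing else changes. Likewise $\ak \bj \bl$ comes from $X_{\ell j}$ applied to $\ak \bl \bl$, which degenerates; so instead I would apply $Y_{kk}$ to $\ak \al \bj$ and use $X_{kj}$ on $\ak \bk \bl$. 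Checking carefully that each such move stays within four indices and produces only previously obtained terms is the part I expect to require the most case-checking.
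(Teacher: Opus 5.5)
Your proposal is correct and is essentially the paper's argument: the main case is the same $Y_{jj}$ computation on $\ai\aj\bi\wedge\bar x\bar y\bar z$, and when $\bj$ lies in the partner you, like the paper, apply a generator fixing $\ai\bi\bj$ that trades some $\bm$ (or $\am$) in a main-case partner for $\bj$. The differences are minor: the paper gets $\bj\bk\bl$ directly via $Y_{jk}$ on $\ak\bk\bl$, whereas you go through $\ak\bj\bl$ and $Y_{kk}$ at the cost of a LOT, and your $X_{kj}$ moves on $\ak\al\bk$ and $\ak\bk\bl$ already make a fifth index unnecessary for every $g\ge 4$. Two small slips: the list $\bar z\in\{\ak,\al,\bl\}$ should read $\{\al,\bk,\bl\}$, and $X_{\ell j}$ fixes $\ak\al\bk$, so it must be applied to $\ak\al\bl$ instead.
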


\begin{proof} We will show $\ai \bi \bj \wedge \bar{x} \bar{y} \bar{z} \in \operatorname{im}(\sigma_*)$ when $\bar{x}, \bar{y}, \bar{z} \notin \{ \bi, \ai, \aj \}$. If $\bj \notin \{ \bar{x}, \bar{y}, \bar{z} \}$, then $\ai \aj \bi \wedge \bar{x} \bar{y} \bar{z}$ is not index-matched and lies in $\operatorname{im}(\sigma_*)$ by Proposition \ref{prop:aiajbi}. In this case, we have \begin{gather*}
    Y_{jj}(\ai \aj \bi \wedge \bar{x} \bar{y} \bar{z}) = \ai \aj \bi \wedge \bar{x} \bar{y} \bar{z} + \boxed{\ai \bi \bj \wedge \bar{x} \bar{y} \bar{z}} + \underbrace{\ai \bi \wedge \bar{x} \bar{y} \bar{z}}_{\text{LOT}}.
\end{gather*} It remains to show $\ai \bi \bj \wedge \bj \bar{x} \bar{y} \in \operatorname{im}(\sigma_*)$. First, we show $\ai \bi \bj \wedge \bj \bk \bl \in \operatorname{im}(\sigma_*)$. By the above discussion, we have $\ai \bi \bj \wedge \ak \bk \bl \in \operatorname{im}(\sigma_*)$ and \begin{gather*} Y_{jk}(\ai \bi \bj \wedge \ak \bk \bl) = \ai \bi \bj \wedge \ak \bk \bl + \boxed{\ai \bi \bj \wedge \bj \bk \bl}. \end{gather*} Now we assume $\ai \bi \bj \wedge \bj \bar{x} \bar{y}$ cannot be reindexed to $\ai \bi \bj \wedge \bj \bk \bl$. Then we can find $1 \leq m \leq g$ such that $m \notin \{i,j \}$ and (since $g \geq 4$) at least one of $\am$ and $\bm$ is not in $\{ \bar{x}, \bar{y} \}$. If $\am \notin \{ \bar{x}, \bar{y} \}$, we have $\ai \bi \bj \wedge \am \bar{x} \bar{y} \in \operatorname{im}(\sigma_*)$ and \begin{gather*}
Y_{jm}(\ai \bi \bj \wedge \am \bar{x} \bar{y}) = \ai \bi \bj \wedge \am \bar{x} \bar{y} + \boxed{\ai \bi \bj \wedge \bj \bar{x} \bar{y}}.\end{gather*} If $\bm \notin \{ \bar{x}, \bar{y} \}$, we have $\ai \bi \bj \wedge \bm \bar{x} \bar{y} \in \operatorname{im}(\sigma_*)$ and \begin{gather*}
    X_{jm}(\ai \bi \bj \wedge \bm \bar{x} \bar{y}) = \ai \bi \bj \wedge \bm \bar{x} \bar{y} + \boxed{\ai \bi \bj \wedge \bj \bar{x} \bar{y}}. \qedhere
\end{gather*} \end{proof}

We have the following proposition.

\begin{proposition}[Type $\ai \bj \bk \wedge \square \square \square$] \label{prop:aibjbk}
    When $g \geq 4$, all non-index-matched basis elements of the form $\ai \bj \bk \wedge \square \square \square$ in $\bigwedge^2 B_3^{b_i}$ lie in $\operatorname{im}(\sigma_*)$. 
\end{proposition}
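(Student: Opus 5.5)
Following the pattern of Propositions \ref{prop:aiajbk} and \ref{prop:aibibj}, I will show $\ai \bj \bk \wedge \bar{x} \bar{y} \bar{z} \in \operatorname{im}(\sigma_*)$ whenever it is not index-matched. For this basis element, "not index-matched" means $\bar{x}, \bar{y}, \bar{z} \notin \{\bi, \aj, \ak\}$. I will obtain such elements from the already-established Propositions \ref{prop:aiajbi}--\ref{prop:aibibj} by applying a single generator of $\llsp$, and I will discard LOTs, which lie in $\operatorname{im}(\sigma_*)$ by Corollary \ref{corollary:2wedge3}, Proposition \ref{prop:bi}, and Proposition \ref{prop:remaining2x2}.

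The natural move uses the formula for $Y_{ij}$ on $\ak \bl \bm$ from the proof of Lemma \ref{lemma:llspsubspace}, which gives
$$Y_{jk}(\ai \ak \bk) \text{-type relations}, \qquad Y_{jj}(\ai \aj \bk) = \ai \aj \bk + \ai \bj \bk + \ai \bk.$$
So the main case starts from $\ai \aj \bk \wedge \bar{x}\bar{y}\bar{z}$. This element is not index-matched provided $\bar{x}, \bar{y}, \bar{z} \notin \{\bi, \bj, \ak\}$, and then it lies in $\operatorname{im}(\sigma_*)$ by Proposition \ref{prop:aiajbk}. If moreover $\aj \notin \{\bar{x},\bar{y},\bar{z}\}$, so that $Y_{jj}$ fixes $\bar{x}\bar{y}\bar{z}$, then applying $Y_{jj}$ yields
$$\ai \aj \bk \wedge \bar{x}\bar{y}\bar{z} + \boxed{\ai \bj \bk \wedge \bar{x}\bar{y}\bar{z}} + \text{LOT}.$$
This handles every target with $\bj \notin \{\bar{x},\bar{y},\bar{z}\}$; the condition $\aj \notin$ is already part of the hypothesis. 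By the symmetry $j \leftrightarrow k$ (using $Y_{kk}$ and $\ai \ak \bj$), it also handles every target with $\bk \notin \{\bar{x},\bar{y},\bar{z}\}$.

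The remaining case, which I expect to be the main obstacle, is when both $\bj$ and $\bk$ appear in $\bar{x}\bar{y}\bar{z}$. The targets are then $\ai \bj \bk \wedge \bj \bk \bar{w}$, where $\bar{w} \notin \{\bi, \aj, \ak\}$. Concretely, $\bar{w} = \al$ or $\bar{w} = \bl$ with $\ell \notin \{i,j,k\}$, or $\bar{w} = \ai$ (the form $\ai \bj \bk$ itself, which is excluded by distinctness). For these I will mimic the last step of Proposition \ref{prop:aibibj}. Choose $m \notin \{i,j,k\}$; since $g \geq 4$ such an $m$ exists, and when $\bar{w}$ involves index $\ell$ I take $m = \ell$ or a fresh index as convenient.

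\begin{itemize}
\item If $\am$ does not appear, I use $\ai \bj \bk \wedge \am \bk \bar{w}$, which is covered by the previous paragraph because $\bj$ does not appear in it. Applying $Y_{jm}$ sends $\am \mapsto \am + \bj$, fixes $\ai \bj \bk$ (as $i \neq j,m$), and produces the boxed target.
\item For $\bar{w} = \al$, take $m = \ell$, so that $\am = \al$ appears. Instead, I start from $\ai \bj \bk \wedge \bj \bk \bl$, obtained as above, and apply $Y_{\ell\ell}$. This gives $\bl \mapsto \bl$, which is not useful. So I will instead use $X_{j\ell}$ on $\ai \bj \bk \wedge \bl \bk \al$, which sends $\bj \mapsto \bj + \bl$. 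However, $X_{j\ell}$ also changes the left factor to $\ai(\bj + \bl)\bk$, producing extra cross terms of the form $\ai \bl \bk \wedge \cdots$, which are themselves covered by the earlier cases. In the same spirit, $X_{\ell m}$ with $\bar{w}=\bl$ can be used.
\end{itemize}

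Tracking these cross terms and checking that each is either already known or a LOT is the delicate bookkeeping step. At that step I would first try to choose generators that fix the left factor $\ai \bj \bk$, namely $Y$'s, and $X_{pq}$ with $p \notin \{j,k\}$ and $q \neq i$, so that no cross terms arise.
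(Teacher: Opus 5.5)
Your main case matches the paper: apply $Y_{jj}$ to $\ai\aj\bk\wedge\bar x\bar y\bar z$ when $\bj\notin\{\bar x,\bar y,\bar z\}$, and $Y_{kk}$ symmetrically. Your first bullet for the leftover targets $\ai\bj\bk\wedge\bj\bk\bar w$ is also correct. For $m\notin\{i,j,k\}$ with $\bar w\neq\am$, the generator $Y_{jm}$ fixes $\ai\bj\bk$ and $\bar w$ and sends $\am\mapsto\am+\bj$. It therefore carries $\ai\bj\bk\wedge\am\bk\bar w$, which is in the image by the main case, to itself plus the target. This settles $\bar w=\bl$ for all $g\geq 4$, and $\bar w=\al$ whenever a fifth index $m\notin\{i,j,k,\ell\}$ exists.

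\textbf{The gap} is $\bar w=\al$ with $g=4$. There $m=\ell$ is forced, and your second bullet is the only argument you give. Its computation is wrong. Since $X_{j\ell}$ sends $a_\ell\mapsto a_\ell+a_j$ and $b_j\mapsto b_j+b_\ell$, it fixes $\bk$ and $\bl$ and sends $\al\mapsto\al+\aj$. Hence
\[
X_{j\ell}(\ai\bj\bk\wedge\al\bk\bl)=\ai\bj\bk\wedge\al\bk\bl+\ai\bj\bk\wedge\aj\bk\bl+\ai\bk\bl\wedge\al\bk\bl+\ai\bk\bl\wedge\aj\bk\bl .
\]
The target $\ai\bj\bk\wedge\al\bj\bk$ never appears. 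Moreover, two of the cross terms, $\ai\bj\bk\wedge\aj\bk\bl$ and $\ai\bk\bl\wedge\al\bk\bl$, lie in $IM^3$, so they are not ``covered by the earlier cases'' either. As written, the proposal does not prove the statement for $g=4$.

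\textbf{The fix.} Your closing heuristic (use $X_{pq}$ with $p\notin\{j,k\}$, $q\neq i$, so the left factor is fixed) points to the repair: take the transvection with indices reversed. $X_{\ell j}$ sends $a_j\mapsto a_j+a_\ell$ and $b_\ell\mapsto b_\ell+b_j$, so it fixes $\ai\bj\bk$, $\al$ and $\bk$, and
\[
X_{\ell j}(\ai\bj\bk\wedge\al\bk\bl)=\ai\bj\bk\wedge\al\bk\bl+\ai\bj\bk\wedge\al\bj\bk .
\]
The first term is in the image by your main case, and only four indices are used.

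The paper handles the whole leftover case uniformly instead. For $\bar w\notin\{\ai,\bi,\aj,\ak\}$, the element $\ai\bi\bk\wedge\bj\bk\bar w$ is non-index-matched, hence in $\operatorname{im}(\sigma_*)$ by Proposition \ref{prop:aibibj}. Then $X_{ij}$, which sends $\bi\mapsto\bi+\bj$ and fixes $\bj\bk\bar w$, maps it to itself plus $\ai\bj\bk\wedge\bj\bk\bar w$. No spare index is needed, and there is no split into $\bar w=\al$ versus $\bar w=\bl$.
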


\begin{proof} We will show $\ai \bj \bk \wedge \bar{x} \bar{y} \bar{z} \in \operatorname{im}(\sigma_*)$ when $\bar{x}, \bar{y}, \bar{z} \notin \{ \bi, \aj, \ak \}$. If $\bj \notin \{ \bar{x}, \bar{y}, \bar{z} \}$, then $\ai \aj \bk \wedge \bar{x} \bar{y} \bar{z} \in \operatorname{im}(\sigma_*)$ by Proposition \ref{prop:aiajbk} and we have \begin{gather*}
    Y_{jj}(\ai \aj \bk \wedge \bar{x} \bar{y} \bar{z}) = \ai \aj \bk \wedge \bar{x} \bar{y} \bar{z} + \boxed{\ai \bj \bk \wedge \bar{x} \bar{y} \bar{z}} + \underbrace{\ai \bk \wedge \bar{x} \bar{y} \bar{z}}_{\text{LOT}}.
\end{gather*} Similarly, if $\bk \notin \{ \bar{x}, \bar{y}, \bar{z} \}$,  then $\ai \ak \bj \wedge \bar{x} \bar{y} \bar{z} \in \operatorname{im}(\sigma_*)$ by Proposition \ref{prop:aiajbk} and we have \begin{gather*}
    Y_{kk}(\ai \ak \bj \wedge \bar{x} \bar{y} \bar{z}) = \ai \ak \bj \wedge \bar{x} \bar{y} \bar{z} + \boxed{\ai \bj \bk \wedge \bar{x} \bar{y} \bar{z}} + \underbrace{\ai \bj \wedge \bar{x} \bar{y} \bar{z}}_{\text{LOT}}.
\end{gather*} It remains to show $\ai \bj \bk \wedge \bj \bk \bar{x} \in \operatorname{im}(\sigma_*)$. This element vanishes if $\bar{x} = \ai$, so assume otherwise. We have $\ai \bi \bk \wedge \bj \bk \bar{x} \in \operatorname{im}(\sigma_*)$ by Proposition \ref{prop:aibibj} and \begin{gather*}
    X_{ij}(\ai \bi \bk \wedge \bj \bk \bar{x}) = \ai \bi \bk \wedge \bj \bk \bar{x} + \boxed{\ai \bj \bk \wedge \bj \bk \bar{x}}. \qedhere \end{gather*} \end{proof}

We have the following proposition.

\begin{proposition}[Type $\bi \bj \bk \wedge \square \square \square$] \label{prop:bibjbk}
    When $g \geq 4$, all non-index-matched basis elements of the form $\bi \bj \bk \wedge \square \square \square$ in $\bigwedge^2 B_3^{b_i}$ lie in $\operatorname{im}(\sigma_*)$.
\end{proposition}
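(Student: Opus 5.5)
We need to show $\bi \bj \bk \wedge \bar{x} \bar{y} \bar{z} \in \operatorname{im}(\sigma_*)$ whenever this basis element is not in $IM^3$. Since $\bi \bj \bk$ contains no $\bar{a}$'s, it can only be index-matched as the second factor. So the element lies in $IM^3$ exactly when the other factor contains one of $\ai, \aj, \ak$, and the hypothesis is $\bar{x}, \bar{y}, \bar{z} \notin \{\ai, \aj, \ak\}$. The plan follows Propositions \ref{prop:aibibj} and \ref{prop:aibjbk}: obtain $\bi \bj \bk$ from $\ai \bj \bk$ by applying some $Y_{ii}$ or $Y_{im}$. Every term produced will either be handled by Proposition \ref{prop:aibjbk} (or Proposition \ref{prop:aibibj}) or be a LOT, which lies in $\operatorname{im}(\sigma_*)$ by Corollary \ref{corollary:2wedge3}, Proposition \ref{prop:bi}, and Proposition \ref{prop:remaining2x2}.

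\textbf{Main case: $\bi \notin \{\bar{x}, \bar{y}, \bar{z}\}$.} Then $\ai \bj \bk \wedge \bar{x} \bar{y} \bar{z}$ is not index-matched. It would be index-matched only if $\bar{x}\bar{y}\bar{z}$ contained $\bi$, or if it were of the form $\aj\cdots$ or $\ak\cdots$; all three are excluded. Hence it lies in $\operatorname{im}(\sigma_*)$ by Proposition \ref{prop:aibjbk}. By Equation \eqref{eq:Yij(ak)}, $Y_{ii}$ sends $\ai \mapsto \ai + \bi + 1$ and fixes every $\bar{b}$. Because $\ai \notin \{\bar{x},\bar{y},\bar{z}\}$, it also fixes $\bar{x}\bar{y}\bar{z}$. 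This gives
\[ Y_{ii}(\ai \bj \bk \wedge \bar{x}\bar{y}\bar{z}) = \ai \bj \bk \wedge \bar{x}\bar{y}\bar{z} + \bi \bj \bk \wedge \bar{x}\bar{y}\bar{z} + \bj \bk \wedge \bar{x}\bar{y}\bar{z}, \]
and the last term is a LOT. Since $\operatorname{im}(\sigma_*)$ is closed under the $\llsp$-action, the boxed term $\bi\bj\bk \wedge \bar{x}\bar{y}\bar{z}$ lies in $\operatorname{im}(\sigma_*)$. By symmetry in $i,j,k$, the same argument works whenever at least one of $\bi, \bj, \bk$ is missing from $\{\bar{x},\bar{y},\bar{z}\}$.

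\textbf{Remaining case: $\{\bi, \bj, \bk\} \subset \{\bar{x},\bar{y},\bar{z}\}$.} Then the other factor is exactly $\bi\bj\bk$, so the wedge product is zero and there is nothing to prove.

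\textbf{Checks for $g = 4$, and the main obstacle.} The argument introduces no new index, so genus $4$ causes no trouble. The step I expect to need the most care is confirming that $Y_{ii}$ really fixes $\bar{x}\bar{y}\bar{z}$. $Y_{ii}$ moves only $\ai$, and $\ai$ is excluded by hypothesis, so this holds; in particular no stray $\bar{a}$-terms appear that might be index-matched. If some subcase still misbehaves, my fallback is to use $Y_{im}$ with $m \notin \{i,j,k\}$, applied to $\am \bj \bk \wedge \bar{x}\bar{y}\bar{z}$, exactly as in Proposition \ref{prop:bibj}.
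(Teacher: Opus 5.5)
Your proof is correct, but it is organized differently from the paper's.

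\textbf{The paper's route.} It first handles the case where $\bar{x}\bar{y}\bar{z}$ contains some $\al$, necessarily with $\ell \notin \{i,j,k\}$. There it reads the element as $\al \cdots \wedge \bi\bj\bk$ and cites Propositions \ref{prop:aiajbi}--\ref{prop:aibjbk} with the roles of the two factors swapped. Only for a pure $\bl\bm\bn$ does it apply $Y_{\ell\ell}$. Even then it modifies the \emph{other} factor, starting from $\bi\bj\bk \wedge \al\bm\bn$ with $\ell \notin \{i,j,k\}$, so $Y_{\ell\ell}$ fixes $\bi\bj\bk$ automatically.

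\textbf{Your route.} You always modify the $\bi\bj\bk$ factor instead. You lift it to $\ai\bj\bk$, where $\bi$ is a $\bar b$-term missing from the other factor. The non-index-matched hypothesis $\ai \notin \{\bar{x},\bar{y},\bar{z}\}$ is then exactly what makes $Y_{ii}$ fix $\bar{x}\bar{y}\bar{z}$. The same hypothesis, together with $\bi \notin \{\bar{x},\bar{y},\bar{z}\}$, makes $\ai\bj\bk \wedge \bar{x}\bar{y}\bar{z}$ non-index-matched, so Proposition \ref{prop:aibjbk} applies.

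\textbf{Comparison.} Your version is a single uniform argument with no case split on whether the other factor contains $\bar a$-terms. It uses only Proposition \ref{prop:aibjbk} and the LOT $\bj\bk \wedge \bar{x}\bar{y}\bar{z}$, which Corollary \ref{corollary:2wedge3} covers. The paper's version draws on all four preceding propositions, but in exchange never needs to check that $Y$ fixes the second factor. The $Y_{im}$ fallback you mention is never needed.
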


\begin{proof}
    We will show $\bi \bj \bk \wedge \bar{x} \bar{y} \bar{z} \in \operatorname{im}(\sigma_*)$ when $\bar{x}, \bar{y}, \bar{z} \notin \{ \ai, \aj, \ak \}$. If any of $\bar{x}, \bar{y}, \bar{z}$ is an $``\bar{a}"$ term, then $\bi \bj \bk \wedge \bar{x} \bar{y} \bar{z} \in \operatorname{im}(\sigma_*)$ by Propositions \ref{prop:aiajbi}--\ref{prop:aibjbk}. It remains to show $\bi \bj \bk \wedge \bl \bm \bn \in \operatorname{im}(\sigma_*)$, where we stray from our usual convention and allow for indices to not be distinct. Up to reindexing, we can assume $\ell \notin \{i,j,k \}$. In this case, we have $\bi \bj \bk \wedge \al \bm \bn \in \operatorname{im}(\sigma_*)$ and \begin{gather*}
        Y_{\ell \ell}(\bi \bj \bk \wedge \al \bm \bn) = \bi \bj \bk \wedge \al \bm \bn + \boxed{\bi \bj \bk \wedge \bl \bm \bn} + \underbrace{\bi \bj \bk \wedge \bm \bn}_{\text{LOT}}. \qedhere \end{gather*} \end{proof}

Theorem \ref{theorem:nonIM3} follows from Propositions \ref{prop:aibi}--\ref{prop:bibjbk}.

\subsection{Dimension counting} \label{section:dimensionIM3} In the following proposition, we bound from below the dimension of $\operatorname{im}(\sigma_*)$.

\begin{proposition} \label{prop:dimageHI}
     For $g \geq 4$, the image of the map $\sigma_*: H_2(\mathcal{HI}_g^1; \F_2) \rightarrow H_2(B_3^{b_i}; \F_2)$ has dimension at least on the order of $g^6$.
\end{proposition}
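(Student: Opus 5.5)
By Theorem \ref{theorem:nonIM3}, $\operatorname{im}(\sigma_*)$ contains every basis element of $\bigwedge^2 B_3^{b_i}$ that does not lie in $IM^3$. Since these are distinct basis vectors, they are linearly independent, so
$$\operatorname{dim}(\operatorname{im}(\sigma_*)) \geq \operatorname{dim}\Big(\bigwedge^2 B_3^{b_i}\Big) - \operatorname{dim}(IM^3).$$
The plan is to mirror the proof of Proposition \ref{prop:dimageHK}. We show that $\operatorname{dim}(\bigwedge^2 B_3^{b_i})$ is on the order of $g^6$, while $\operatorname{dim}(IM^3)$ is only on the order of $g^5$.

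\textbf{The total space.} By Proposition \ref{prop:imageBCJdimension}, $\operatorname{dim}(B_3^{b_i}) = \frac{7g^3+5g}{6}$. Hence
$$\operatorname{dim}\Big(\bigwedge^2 B_3^{b_i}\Big) = \binom{\frac{7g^3+5g}{6}}{2} = \frac{49}{72}g^6 + O(g^4).$$

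\textbf{The index-matched part.} The space $IM^3$ has as basis the wedges $\ai \bar{x}\bar{y} \wedge \bi \bar{w}\bar{z}$ of two degree-3 elements of $\mathcal{B}_3^{b_i}$. We bound the number of such wedges crudely:
\begin{itemize}
\item there are $g$ choices of the matched index $i$;
\item the remaining factor $\bar{x}\bar{y}$ of the first monomial has at most $\binom{2g}{2} = O(g^2)$ choices;
\item likewise $\bar{w}\bar{z}$ of the second monomial has at most $O(g^2)$ choices.
\end{itemize}
This gives $\operatorname{dim}(IM^3) \leq g\binom{2g}{2}^2 = O(g^5)$. The count overcounts, because a single wedge may be index-matched with respect to several indices $i$, but overcounting only strengthens an upper bound. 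If an exact count is wanted, as in Proposition \ref{prop:dimageHK}, one can enumerate by the type of $\ai\bar{x}\bar{y}$ among the five degree-3 families. The leading coefficient would still be of order $g^5$.

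\textbf{Conclusion.} Subtracting gives $\operatorname{dim}(\operatorname{im}(\sigma_*)) \geq \frac{49}{72}g^6 - O(g^5)$, which is on the order of $g^6$. Dualizing as in Section \ref{section:homology}, every element of $\operatorname{im}(\sigma_*)$ gives a class not in $\ker(\sigma^*)$. This yields Theorem \ref{maintheorem:HI}.

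The only step requiring any care is the upper bound on $\operatorname{dim}(IM^3)$. The point is that the matching condition fixes one index, and this costs a full factor of $g$ relative to $g^6$. No real obstacle is expected there.
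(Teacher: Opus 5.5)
Your proposal is correct and follows the paper's route: apply Theorem~\ref{theorem:nonIM3}, then compare $\dim \bigwedge^2 B_3^{b_i} = \binom{(7g^3+5g)/6}{2} \sim \frac{49}{72}g^6$ with an $O(g^5)$ bound on $\dim IM^3$. The only difference is that you bound $\dim IM^3 \le g\binom{2g}{2}^2$ crudely instead of using the paper's case-by-case count, and this loses nothing because only an upper bound is needed (the paper's count is itself only an upper bound, since it lists $\bar{a}_i\bar{b}_j\bar{b}_k$ once with $\bar{x}=\bar{b}_j$ and once with $\bar{x}=\bar{b}_k$; it has the same leading term $4g^5$ as yours).
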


\begin{proof} In Theorem \ref{theorem:nonIM3}, we showed that $\operatorname{im}(\sigma_*)$ contains all basis elements of $\bigwedge^2 B_3^{b_i}$ that do not lie in $IM^3$. It suffices to show that $\operatorname{dim}(\bigwedge^2 B_3^{b_i})$ is on the order of $g^6$, while $\operatorname{dim}(IM^3)$ is on the order of $g^5$. First, it follows from Proposition \ref{prop:imageBCJdimension} that $\bigwedge^2 B_3^{b_i}$ has dimension $${\frac{7g^3 + 5g}{6} \choose 2} = \frac{49}{72} g^6 + O(g^5).$$ A basis for $IM^3$ consists of elements of the form $$\ai \bar{x} \bar{y} \wedge \bi \bar{w} \bar{z}$$ for which the sets $\{ \ai, \bar{x}, \bar{y} \}$ and $\{ \bi, \bar{w}, \bar{z} \}$ each consist of three distinct elements. We determine $\operatorname{dim}(IM^3)$. First, there are $g$ ways to choose $i$. Next, we must choose $\bar{x}, \bar{y}, \bar{w}, \bar{z}$, and we split this up into two cases. 

Consider the case $\bar{x} = \bi$. This leaves $2g-2$ ways to choose $\bar{y}$. We cannot have $\{ \bar{w}, \bar{z} \} = \{ \ai, \bar{y} \}$, but any other choice of $\{ \bar{w}, \bar{z} \}$ such that $\bar{w}$ and $\bar{z}$ and $\bi$ are distinct will work. This leaves\footnote{We account for $\blue{ \bi \notin \{ \bar{w}, \bar{z} \} }$ and remove $\orange{\{\ai, \bar{y}\}}$ as a choice.} $${2g \; \blue{-1} \choose 2} \; \orange{-1}$$ ways to choose $\{ \bar{w}, \bar{z} \}$.

Now consider the case $\bi \notin \{ \bar{x}, \bar{y} \}$. At least one of $\bar{x}, \bar{y}$ must be a ``$\bar{b}$" term, so set $\bar{x} = \bj$ for $1 \leq i \neq j \leq g$. There are $g-1$ ways to do this. There are $2g-3$ ways to choose $\bar{y} \notin \{ \ai, \bi, \bj \}$. Now $\{ \bar{w}, \bar{z} \}$ can be anything as long as $\bar{w}$ and $\bar{z}$ and $\bi$ are distinct, and there are ${2g-1 \choose 2}$ ways to do this. Putting this all together, we have $$\operatorname{dim}(IM^3) = g \left[ (2g-2) \left( \binom{2g-1}{2} - 1 \right) + (g-1)(2g-3) \binom{2g-1}{2} \right],$$ and so \begin{gather*} \operatorname{dim}(IM^3) = 4 g^5 + O(g^4). \qedhere \end{gather*} \end{proof}


Theorem \ref{maintheorem:HI} follows from Theorem \ref{theorem:nonIM3} and Proposition \ref{prop:dimageHI}.

\end{document}